\documentclass[10pt,reqno]{amsart}

\usepackage[a4paper]{geometry}
\usepackage{enumerate,enumitem}
\usepackage[utf8]{inputenc}
\usepackage[T1]{fontenc} 
\usepackage[english]{babel}
\usepackage{array}
\usepackage{amsmath,amssymb,amsthm}
\usepackage{xcolor}
\usepackage{graphicx}
\usepackage{mathrsfs}
\setlength{\parskip}{11pt}
\usepackage{url}
\usepackage{hyperref}
\hypersetup{colorlinks=true,breaklinks=true,urlcolor=blue,linkcolor=blue,bookmarksopen=true} 
\allowdisplaybreaks

\newcommand{\be} {\begin{equation}}
\newcommand{\ee} {\end{equation}}
\newcommand{\bea} {\begin{eqnarray}}
\newcommand{\eea} {\end{eqnarray}}
\newcommand{\Bea} {\begin{eqnarray*}}
\newcommand{\Eea} {\end{eqnarray*}}

\newcommand*{\X}{\mathcal{X}}
\newcommand*{\M}{\mathcal{M}}
\newcommand*{\A}{\mathcal A}
\renewcommand{\P}{\mathcal P}

\newcommand{\1}{\mathbf 1}
\newcommand*{\N}{\mathbb{N}}
\newcommand*{\B}{\mathcal{B}}
\newcommand*{\Z}{\mathbb{Z}}

\newcommand*{\R}{\mathbb{R}}
\newcommand*{\ro}{r}

\newcommand*{\e}{\mathrm{e}}

\newcommand*{\E}{\mathbb{E}}

\newcommand{\dis}{\displaystyle}

\newtheorem{theo}{Theorem}[section]

\newtheorem{defi}[theo]{Definition}
\newtheorem{coro}[theo]{Corollary}
\newtheorem{lem}[theo]{Lemma}

\newtheorem{hypo}[theo]{Assumption}
\newtheorem{Rq}[theo]{Remark}
\numberwithin{equation}{section}

\definecolor{darkred}{rgb}{0.9,0.1,0.1}

\setlength{\marginparwidth}{2cm}

\setlength{\marginparwidth}{2cm}

\setlength{\marginparwidth}{2cm}

\setlength{\marginparwidth}{2cm}

\begin{document}

\title[Ergodic behavior of semigroups via Doeblin's conditions]{Ergodic behavior of non-conservative  semigroups via generalized Doeblin's conditions}

\author{
	Vincent \textsc{Bansaye}
		\and
	Bertrand \textsc{Cloez}
		\and
	Pierre \textsc{Gabriel}
}
\def\runauthor{
	Vincent \textsc{Bansaye}, Bertrand \textsc{Cloez}, Pierre \textsc{Gabriel}
}

\date{}
    \address[V. \textsc{Bansaye}]{CMAP, \'Ecole Polytechnique, Route de Saclay, 91128 Palaiseau cedex, France.}
    \email{vincent.bansaye@polytechnique.edu}
    \address[B. \textsc{Cloez}]{MISTEA, INRA, Montpellier SupAgro, Univ. Montpellier, 2 place Pierre Viala, 34060 Montpellier, France.}
    \email{bertrand.cloez@inra.fr}
    \address[P. \textsc{Gabriel}]{Laboratoire de Math\'ematiques de Versailles, UVSQ, CNRS, Universit\'e Paris-Saclay,  45 Avenue des \'Etats-Unis, 78035 Versailles cedex, France.}
    \email{pierre.gabriel@uvsq.fr}

\begin{abstract}

We provide   quantitative estimates  in total variation distance for positive semigroups, which can be non-conservative and  non-homogeneous. The techniques relies on a family of conservative semigroups that describes  a typical particle   and Doeblin's type conditions inherited from \cite{CV14} for coupling the associated process. Our aim is to provide quantitative estimates for linear partial differential equations and we develop several applications for population dynamics in varying environment. We start with the asymptotic profile for a growth  diffusion model  with time and space non-homogeneity. Moreover we provide general estimates for semigroups which become asymptotically homogeneous, which are  applied to an age-structured population model. Finally, we obtain a speed of convergence for periodic semigroups and new bounds in the homogeneous setting. They are illustrated on the renewal equation.

\tableofcontents

\end{abstract}

\keywords{Positive semigroups; non-autonomous linear evolution equations; measure solutions; ergodicity; Krein-Rutman theorem; Floquet theory; branching processes; population dynamics}

\subjclass[2010]{Primary 35B40; Secondary 47A35, 47D06, 60J80, 92D25}

\maketitle

\section{Introduction}
The solutions of the Cauchy problem associated to a linear Partial Differential Equation (PDE) can be expressed through a semigroup of linear operators. 
In the present work, we are interested in the ergodic properties of positive  semigroups  $(M_{s,t})_{t\geq s\geq 0}$ acting on measures,
and their application to the  study of the asymptotic profile of populations evolving in varying environment, which can be described by linear (nonautonomous) PDEs. 
Roughly speaking,  for any $t\geq s\geq 0,$ $M_{s,t}$ is both a positive linear operator on a space of measures ($\mu\mapsto\mu M_{s,t}$)
and on a space of measurable functions ($f\mapsto M_{s,t}f$),
and the family $(M_{s,t})_{t\geq s\geq 0}$ satisfies the semigroup  property 
$$ 
\forall s\leq u \leq t, \qquad M_{s,t}=M_{s,u}M_{u,t}.
$$
For a measure $\mu$ and a measurable function $f$, we  denote by $\mu(f)$ the integral of $f$ against $\mu.$
We  establish  ergodic approximations of the following form
$$\mu M_{s,t}  \approx \mu(h_s)\, r_{s,t}\,   \gamma_{t}$$
when $t\rightarrow \infty,$ for a fixed initial time $s.$
The first term in this long-time decomposition is a linear form $\mu\mapsto\mu(h_s)$ on the space of measures, independent of $t,$ which provides the long term impact of the initial distribution $\mu$ through the function $h_s.$
The second term is a family $(r_{s,t})_{t\geq s}$ of positive real numbers, independent of $\mu,$ describing the evolution of the ``mass''.
Finally $\gamma_t$ is the asymptotic probability distribution, which does not depend on $s$ nor $\mu.$
The harmonic function $h_s$ is unique up to normalization, but the two families $(r_{s,t})_{t\geq s}$ and $(\gamma_t)_{t\geq0}$ are not.
Nevertheless in particular situations, they can be chosen in certain relevant classes in which they are unique.
In Section~\ref{applis} we detail and illustrate such cases, briefly presented here:

\paragraph{\it Homogeneous semigroups}
In the homogeneous setting $M_{s,t}=M_{t-s},$ and provided a topology on the space of measures,  spectral theorems  suggest the  behavior
$$\mu M_{t}  = \mu(h)  \e^{\lambda t}   \gamma + \mathcal O(\e^{(\lambda-\varepsilon )t}),$$
where $\lambda$ is the dominant eigenvalue of the infinitesimal generator of the semigroup, $\gamma$ and $h$ are the associated eigenvectors, and $\varepsilon$ is the spectral gap.
This is an immediate consequence of the Perron Frobenius Theorem \cite{Perron,Frobenius} in finite state space setting.
In a general Banach lattice the existence of the eigentriplet $(\lambda,\gamma,h)$ is ensured by the Krein-Rutman Theorem \cite{Krein-Rutman} when the semigroup (or the resolvent of its generator) is positive, irreducible, and compact.
A refined variant of the Krein-Rutman theorem, with spectral gap, is proved in~\cite{MS16} in the setting of a Banach lattice of functions.
The proof relies on a spectral analysis 
and applies to positive semigroups with a generator which satisfies a strong maximum principle
and admits a decomposition verifying a power compactness condition.
In contrast with these approaches, our method is based on a contraction argument and can be efficiently applied to time-inhomogeneous semigroups.

\paragraph{\it Asymptotically homogeneous semigroups}
In the case where there exists a homogeneous semigroup $(N_t)_{t\geq0}$ such that $M_{s,s+t}\approx N_t$ for $s$ large, we prove that the principal eigenvector $\gamma$ of $(N_t)_{t\geq0}$ provides a stationary asymptotic profile
$$\mu M_{s,t}  \approx \mu(h_s)\, r_{s,t}\,   \gamma.$$
But $r_{s,t}$ is not necessarily an exponential growth provided by the associated eigenvalue,
and $h_s$ is not the associated eigenfunction.

\paragraph{\it Periodic semigroups}
When there exists $T>0$ such that $M_{s+T,t+T}=M_{s,t}$ for all $s\leq t,$ the semigroup is said to be periodic.
In this case we can choose for $(\gamma_{t})_{t\geq0}$ a $T$-periodic family, and similarly as in the homogeneous case, the evolution of the mass is exponential.
More precisely there exist a real number $\lambda_F,$ named Floquet eigenvalue after the work of G. Floquet~\cite{F83}, and a periodic family $(\eta_{t})_{t\geq0}$ bounded from above and below such that
$$\mu M_{s,t}  \approx \mu(h_s)\, \e^{\lambda_Ft+\eta_t}\, \gamma_{t}.$$

In all cases, the  bound for the speed of convergence  is expressed in the total variation norm (see Section~\ref{ssect:preliminaries} for the definition),
which is the natural distance for coupling processes in probability.
The proof relies on an auxiliary conservative semigroup $P^{(t)}$, defined for every bounded function $f$ and any times $0\leq s \leq u \leq t $ by
$$
P^{(t)}_{s,u}f=\frac{M_{s,u}(fm_{u,t})}{m_{s,t}}, \qquad \text{where }\ m_{s,t}=M_{s,t} \1,
$$
for which ergodic behaviour can be obtained through coupling arguments. This auxiliary semigroup describes the trajectory of a typical particle and has been used recently 
for the study of branching Markov processes in discrete and continuous time \cite{BDMT, BH15, B15, M16} and  processes  killed at a boundary  \cite{CV14, DelV, MMV12}. We come back
in Appendix~\ref{appendix:proba} on the link between these topics  in probability and  ergodic estimates for semigroups. \\
Doeblin and Lyapounov techniques (or petite sets) \cite{D40,MT} provide  then a powerful tool to control the ergodic behavior of this auxiliary Markov process. More generally,  the constructions of
auxiliary Markov processes derived from a typical or tagged particle  have been well developed  in probability and play a key role in the asymptotic study of stochastic
processes. They appear in  Feynman-Kac formula \cite{DelM04} and in spine technics via  many-to-one formulae \cite{HR17} for  the probabilistic study of branching processes \cite{BW17, C17, EHK10} and
fragmentation processes \cite{B06}, to name but a few. \\
When working on a compact state space or benefiting from an atom or a compact set  uniformly accessible for the whole state space, one can hope to check  Doeblin conditions on the auxiliary semigroup. Recall 
that a conservative, positive and homogeneous semigroup $(Q_t)_{t\geq 0}$ satisfies the Doeblin condition if there exist a constant $c>0$, a coupling probability measure $\nu$ and a time $t_0>0$ such that for all positive and bounded function $f$,
$$
Q_{t_0} f \geq c\, \nu(f).
$$
This condition is equivalent to a contraction in total variation distance and then provides a convenient tool of analysis for non-homogenous models.
Sharp assumptions expressed in function of $M$ have recently been obtained in \cite{CV14} to get a Doeblin condition for  the auxiliary semigroup in a context of absorbed Markov process. These conditions 
are weaker than the classical conditions using Birkhoff contraction \cite{B57,Nussbaum,GaubertQu} and equivalent to uniform exponential convergence.\\
In Lemma~\ref{denis} we prove that Doeblin's condition hold for the semigroup $P^{(t)},$ which in turn provides an  explicit  bound for the  decrease of
$$P^{(t)}_{s,t}f(x)-P^{(t)}_{s,t}f(y)=\frac{M_{s,t}f(x)}{m_{s,t}(x)}-\frac{M_{s,t}f(y)}{m_{s,t}(y)}$$
as $t\rightarrow \infty$ and the ergodic behavior  of the auxiliary semigroup.
The proof of this Lemma is essentially an adaptation of the method in~\cite{CV14,CV16} that we extend to general semigroups in non-homogeneous environment, while they restrict their study to absorbed Markov processes.
This more general semigroup setting allows us to capture a wider range of applications, like the renewal equation we consider in Section~\ref{applis}.
Moreover, we go beyond the contraction of the auxiliary semigroup $P^{(t)}$ and characterize the asymptotic behavior of $(M_{0,t})_{t\geq 0},$ which is a novelty compared to the previous results.\\
More precisely, for any initial time $s\geq0,$ we propose conditions involving a coupling probability measure $\nu$ which guarantee the existence of a positive bounded function $h_s$ and a family of probabilities $(\gamma_t)_{ t\geq 0}$ such that when $t\to\infty$
$$\sup_{\left\|\mu\right\|_{\mathrm{TV}}\leq1}\bigl\| \mu  M_{s,t}  -  \mu(h_s) \nu(m_{s,t}) \gamma_t   \bigr\|_{\mathrm{TV}} =o\big(\nu(m_{s,t})\big).$$
These conditions are stated in Section~\ref{sect:genstatandproof} and a quantified version of above convergence is proved. 
In Section~\ref{applis} the general result is declined in several applications, which are illustrated by concrete and intentionally simple examples of linear PDE issued from population dynamics
We  avoid too much technicality but provide some new estimates and explain the way assumptions can be checked. 
We first consider  in Section \ref{sect:diff}  a model of  population growing in a non-homogeneous and  diffusing in a varying environment, which is illustrated by ergodic random environment.
Intuitively, if the variation of parameters in the model is not vanishing in large times, one does not expect the convergence of $\gamma_t$.
In the case of homogeneous or asymptotically homogeneous semigroups,  we prove
that the asymptotic profile is given by a constant probability measure $\gamma$; see Section \ref{sect:homogene} and Section \ref{sect:quasih} respectively.
Finally, when the semigroup evolves periodically we prove that the asymptotic profile $\gamma_t$ is periodic; see Section \ref{sect:periodic}.
Results of Section~\ref{sect:homogene} (homogeneous semigroups), Section~\ref{sect:quasih} (asymptotically homogeneous semigroups) and Section~\ref{sect:periodic} (periodic semigroups) are illustrated on the renewal equation.
In these three  settings, we obtain new sharp conditions for convergence with explicit rate of convergence.

\section{General statement and proof}
\label{sect:genstatandproof}

\subsection{Preliminaries on measures and semigroups}\label{ssect:preliminaries}

We start by recalling some definitions and results about measure theory, and we refer to~\cite{Rudin} for more details and proofs.

Let $\X$ be a locally compact Hausdorff space and denote by $\B_b (\X)$ the space of bounded Borel functions $f:\X\to\R$ endowed with the supremum norm $\|f\|_\infty=\sup_{\X}|f|.$
We denote by $\M(\X)$ the space of regular signed Borel measures on $\X$\footnote{Notice that if $\X\subset \R^n$ is equipped with the induced topology, any signed Borel measure on $\X$ is regular.}, by $\M_+(\X)$ its positive cone ({\it i.e.} the set of regular finite positive Borel measures), and by $\P(\X)$ the subset of probability measures.
For two measures $\mu,\widetilde\mu\in\M(\X),$ we say that $\mu$ is larger than $\widetilde\mu,$ and write $\mu\geq\widetilde\mu,$ if $\mu-\widetilde\mu\in\M_+(\X).$
The Jordan decomposition theorem ensures that for any $\mu\in\M(\X)$ there exists a unique decomposition $\mu=\mu_+-\mu_-$
with $\mu_+$ and $\mu_-$ positive and mutually singular.
The positive measure $|\mu|=\mu_++\mu_-$ is called the total variation measure of the measure $\mu,$
and its mass is the total variation norm of $\mu$
\[\left\|\mu\right\|_{\mathrm{TV}}:=|\mu|(\X)=\mu_+(\X)+\mu_-(\X).\]
Clearly we have the identity\footnote{We see here that the definition we use for the total variation norm differs from the usual probabilistic definition of a factor $1/2.$}
\[\left\|\mu\right\|_{\mathrm{TV}}=\sup_{\|f\|_\infty\leq1}|\mu (f)|,\]
where the supremum is taken over measurable functions.
By virtue of the Riesz representation theorem, this supremum can be restricted to the continuous functions vanishing at infinity\footnote{A function $f$ on a locally compact Hausdorff space $\X$ is said to vanish at infinity if to every $\varepsilon>0,$ there exists a compact set $K\subset\X$ such that $|f(x)|<\varepsilon$ for all $x\in\X\setminus K.$},
{\it i.e.} $f\in C_0(\X)=\overline{C_c(\X)}.$
The Riesz representation theorem also ensures that $(\M(\X),\left\|\cdot\right\|_{\mathrm{TV}})$ is a Banach space, as a topological dual space.
It is worth noticing that the inequality $|\mu(f)|\leq \left\|\mu\right\|_{\mathrm{TV}}\|f\|_\infty$ which is valid for any $\mu\in\M(\X)$ and $f\in\B_b(\X)$
can be strengthened into $|\mu(f)|\leq \frac12\left\|\mu\right\|_{\mathrm{TV}}\|f\|_\infty$ when $\mu(\X)=0$ and $f\geq0.$

For any $\Omega\subset\X$ we denote by $\1_\Omega$ the indicator function of the subset $\Omega.$
And we denote by $\1$ the constant function equal to $1$ on $\X$, {\it i.e.} $\1=\1_\X$.

Now we turn to the definition of the (time-inhomogeneous) semigroups we are interested in.
Let $(\X_t)_{t\geq0}$ be a family of locally compact Hausdorff spaces.
A semigroup $M=(M_{s,t})_{0\leq s\leq t}$ is a family of linear operators defined as follows.
For any $t\geq s\geq 0,$ $M_{s,t}$ is a bounded linear operator from $\M(\X_s)$ to $\M(\X_t)$ through the left action
\[M_{s,t}:
\begin{array}{ccc}
\M(\X_s)&\to&\M(\X_t)\\
\mu&\mapsto& \mu M_{s,t}
\end{array},\]
and a bounded linear operator from $\B_b(\X_t)$ to $\B_b(\X_s)$ through the right action
\[M_{s,t}:
\begin{array}{ccc}
\B_b(\X_t)&\to&\B_b(\X_s)\\
f&\mapsto& M_{s,t}f
\end{array}.\]
The semigroup property means here that for all $s\leq u\leq t$ and $f\in \B_b(\X_t)$
\[M_{s,t}f=M_{s,u}(M_{u,t}f).\]
Moreover, we make the following assumptions.

\begin{hypo}
\label{as:Mst}
We assume that for all $t\geq s\geq 0$ we have
\begin{itemize}[itemsep=3mm]
\item[] $(\, f\in \B_b(\X_t), \  f\geq0\,)\ \implies\  M_{s,t}f\geq0,$\hfill(positivity)
\item[] $\forall x\in\X_s,\quad m_{s,t}(x):=(M_{s,t}\1)(x)>0,$\hfill(strong positivity)
\item[] $\forall (\mu,f)\in\M(\X_s)\times \B_b(\X_t),\qquad (\mu M_{s,t})(f)=\mu(M_{s,t}f).$\hfill(left-right compatibility)
\end{itemize}
\end{hypo}
Due to the compatibility condition, we can denote without ambiguity $\mu M_{s,t}f=(\mu M_{s,t})(f)=\mu(M_{s,t}f),$
and $(\mu,f)\mapsto\mu M_{s,t}f$ is a bilinear form on $\M(\X_s)\times \B_b(\X_t).$
Notice additionally that the compatibility condition allows to transfer the semigroup property and the positivity to the left action, {\it i.e.} for all $t\geq s\geq 0$, we have
\[\forall\, u\in[s,t],\ \forall\mu\in\M(\X_s),\qquad\mu M_{s,t}=(\mu M_{s,u})M_{u,t},\]
\[\mu\in\M_+(\X_s)\quad \implies\quad \mu M_{s,t}\in\M_+(\X_t).\]

\subsection{Coupling constants}

Let $\alpha,\beta>0$ and $\nu\in\P(\X_s).$

\begin{defi}[Admissible coupling constants]
\label{def:coupling_constants}
For any $N\geq 1$,
we say that $(c_{i},d_{i})_{1 \leq i \leq N} \in [0,1]^{2N}$   are  $(\alpha,\beta, \nu)$-admissible coupling constants for $M$  on $[s,t]$  if
there exist real numbers  $(t_i)_{ 0\leq i \leq N}$ satisfying
$s\leq t_0 \leq  \ldots \leq t_N \leq t$ and  probability  measures 
$\nu_{i}$ on $\X_{t_i}$ such that for all $i=1,\ldots, N$ and $x\in \X_{t_{i-1}}$,
\begin{equation}\label{as:A1I}\tag{A1}
 \delta_x M_{t_{i-1},t_{i}}\geq c_{i}m_{t_{i-1},t_{i}}(x)\nu_{i},
\end{equation}
and for all $i\in \{1,\ldots, N-1\}$, $\tau\geq t_N$ and $x\in\X_{t_i}$,
\begin{equation}\label{as:A2I}\tag{A2}
 d_{i} m_{t_{i},\tau}(x) \leq \nu_{i} (m_{t_{i},\tau})
\end{equation}
and  for all $\tau\geq t_N$ and $x\in\X_{t_N}$,
\begin{equation}\label{as:A3I}\tag{A3}
 \quad  m_{t_{N},\tau}(x) \leq     \alpha  \, c_{N} \, \nu_{N} (m_{t_{N},\tau})   
\end{equation}
and for all $\tau\geq t_N$ and $x\in\X_{s}$, 
\begin{equation}\label{as:A4I}\tag{A4}
 m_{s,\tau}(x) \leq \beta \,  \nu(m_{s,\tau}).
 \end{equation}
\end{defi}

In the conservative case,  Assumption~\eqref{as:A1I} is the classical Doeblin assumption. It is a strong  irreducibility property:  whatever the initial distribution is, the semigroup  between the times $t_{i-1}$ and $t_i$ is  lowerbounded by a fixed measure $\nu_i$. This condition is  then sufficient (and even necessary) for uniform exponential convergence. \\
But this condition is no longer sufficient for non-conservative semi-group. First, the mass of the process has to be added in  ~\eqref{as:A1I}, as will be seen in examples when the mass vanishes. Moreover  The mass of the semi-group has to be essentially the same for any starting distribution. This is the meaning of Assumptions~\eqref{as:A2I}, \eqref{as:A3I} and \eqref{as:A4I}.

The two first assumptions  allow to get the contraction of the auxiliary semigroup $P^{(t)}$ in the total variation norm following \cite{CV14,CV16}, see Lemma \ref{denis}. 
The two additional assumptions are needed
to prove  the existence of harmonic-type functions and control the speed of convergence in the general result, see  forthcoming Lemma \ref{alacon}. 
Assumptions  \eqref{as:A2I}, \eqref{as:A3I}, \eqref{as:A4I}   all involve the control of the mass $m$ for large times and  will be proved in the same time by a coupling argument in applications of Section \ref{applis}.   The associated constants may change in varying environment, see Section \ref{sect:diff}.

We denote by  $\mathcal H_{\alpha, \beta, \nu}(s,t)$  the set  of $(\alpha,\beta, \nu)$-admissible coupling constants $(c_{i},d_i)_{1 \leq i \leq N}$ for $M$ on $[s,t].$
It can be easily seen from Definition~\ref{def:coupling_constants} that for this set to be nonempty, the constants $\alpha$ and $\beta$ have to be at least greater than or equal to $1.$
We are interested in the optimal admissible coupling and  we set
\begin{equation}
\label{eq:coupling-capacity}
C_{\alpha, \beta, \nu }(s,t)=\sup_{ \mathcal H_{\alpha, \beta, \nu}(s,t)} \left\{- \sum_{i=1}^{N} \log(1-c_{i}d_{i}) \right\},
\end{equation}
where by convention $\sup\varnothing =0$. We observe that $t\mapsto C_{\alpha,\beta, \nu}(s,t)$ is positive and non-decreasing.

\subsection{General result}

Here we state the general result we obtain about the ergodicity of semigroups $M$ which satisfy Assumption~\ref{as:Mst}.
\begin{theo}
\label{th:main}
Let  $ s \geq 0$ and assume that there exist $\alpha, \beta\geq 1,$ and $\nu$ a probability measure on $\X_s$ such that $C_{\alpha, \beta, \nu}(s,t)\rightarrow \infty$ as $t\rightarrow \infty$.
Then there exists a unique function $h_s : \X_s\rightarrow [0,\infty) $ such that
for any $\mu\in \mathcal M(\X_s),$ $\gamma\in \mathcal M(\X_{s_0})$ with $s_0 \in [0, s],$
and for any $t$ such that $C_{\alpha,\beta,\nu}(s,t)\geq \log(4\alpha)$, 
$$\biggl\| \mu  M_{s,t}  -  \mu(h_s) \nu(m_{s,t})  \frac{ \gamma M_{s_0,t} }{ \gamma (m_{s_0,t})}  \biggr\|_{\mathrm{TV}} \leq  8(2+\alpha)  |\mu|(h_s)\,\nu(m_{s,t}) \, \e^{-C_{\alpha,\beta, \nu}(s,t)}.$$
Moreover $h_s(x) \in (0,\beta]$ for any $x\in \X_s$ and $\nu(h_s)=1$.
\end{theo}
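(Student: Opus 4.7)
The plan is to stitch together the two intermediate lemmas whose existence the excerpt has already advertised. The first is Lemma~\ref{denis}, which from \eqref{as:A1I}--\eqref{as:A2I} yields the total variation contraction of the auxiliary conservative semigroup,
\[
\|\pi_1 P^{(t)}_{s,t} - \pi_2 P^{(t)}_{s,t}\|_{\mathrm{TV}} \leq \|\pi_1-\pi_2\|_{\mathrm{TV}}\,\e^{-C_{\alpha,\beta,\nu}(s,t)}
\]
for probability measures $\pi_1,\pi_2$ on $\X_s$. The second is Lemma~\ref{alacon}, which leverages \eqref{as:A3I}--\eqref{as:A4I} to produce the harmonic-type function $h_s:\X_s\to(0,\beta]$ with $\nu(h_s)=1$, together with a quantitative mass comparison of the form
\[
\bigl|\mu(m_{s,t}) - \mu(h_s)\,\nu(m_{s,t})\bigr| \leq K\,|\mu|(h_s)\,\nu(m_{s,t})\,\e^{-C_{\alpha,\beta,\nu}(s,t)}
\]
with $K=K(\alpha)$ explicit; the threshold $C_{\alpha,\beta,\nu}(s,t)\geq\log(4\alpha)$ is precisely what makes this comparison usable and keeps $\mu(m_{s,t})$ within a bounded multiple of $\mu(h_s)\nu(m_{s,t})$.

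For the main estimate I test against $f\in\B_b(\X_t)$ with $\|f\|_\infty\leq 1$. Writing $M_{s,t}f = m_{s,t}\,P^{(t)}_{s,t}f$ and setting $\gamma_0 := \gamma M_{s_0,s}\in\M_+(\X_s)$, the bracket $\bigl[\mu M_{s,t} - \mu(h_s)\nu(m_{s,t})\gamma M_{s_0,t}/\gamma(m_{s_0,t})\bigr](f)$ becomes
\[
\mu\bigl(m_{s,t}\,P^{(t)}_{s,t}f\bigr) - \mu(h_s)\,\nu(m_{s,t})\,\frac{\gamma_0\bigl(m_{s,t}\,P^{(t)}_{s,t}f\bigr)}{\gamma_0(m_{s,t})}.
\]
By the Jordan decomposition I reduce to the case $\mu\in\M_+(\X_s)$, and introduce the probability measures $\hat\mu:=m_{s,t}\mu/\mu(m_{s,t})$ and $\hat\gamma_0:=m_{s,t}\gamma_0/\gamma_0(m_{s,t})$ on $\X_s$. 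Adding and subtracting $\mu(m_{s,t})\,\hat\gamma_0(P^{(t)}_{s,t}f)$ splits the expression as
\[
\mu(m_{s,t})\bigl[\hat\mu(P^{(t)}_{s,t}f) - \hat\gamma_0(P^{(t)}_{s,t}f)\bigr] + \bigl[\mu(m_{s,t})-\mu(h_s)\nu(m_{s,t})\bigr]\,\hat\gamma_0(P^{(t)}_{s,t}f).
\]
The first bracket is bounded by $\|\hat\mu P^{(t)}_{s,t} - \hat\gamma_0 P^{(t)}_{s,t}\|_{\mathrm{TV}}\|f\|_\infty$, which is at most $\e^{-C_{\alpha,\beta,\nu}(s,t)}$ by Lemma~\ref{denis}; the second is controlled directly by Lemma~\ref{alacon}.

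The main obstacle I expect is constant bookkeeping: producing the announced prefactor $8(2+\alpha)$ rather than a looser expression. The delicate point is that $\mu(m_{s,t})$, the weight in front of the contracted term, must itself be rewritten as a bounded multiple of $\mu(h_s)\nu(m_{s,t})$, for which a second use of Lemma~\ref{alacon} loses a factor proportional to $\alpha$; this is exactly where the threshold $C_{\alpha,\beta,\nu}(s,t)\geq\log(4\alpha)$ intervenes. Uniqueness of $h_s$ falls out of the estimate by specialising to $\mu=\delta_x$ and passing to the limit $t\to\infty$, which forces $h_s(x)=\lim_{t\to\infty} m_{s,t}(x)/\nu(m_{s,t})$, pinning $h_s$ down pointwise.
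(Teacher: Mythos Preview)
Your proposal is correct and follows essentially the same route as the paper: contract the auxiliary semigroup via Lemma~\ref{denis}, compare masses via Lemma~\ref{alacon}, and use the threshold $C_{\alpha,\beta,\nu}(s,t)\geq\log(4\alpha)$ to convert the prefactor $\mu(m_{s,t})$ into a bounded multiple of $\mu(h_s)\,\nu(m_{s,t})$. Two minor points of attribution: in the paper $h_s$ is \emph{constructed} inside the main proof as the Cauchy limit $\lim_{\tau\to\infty}m_{s,\tau}(x)/\nu(m_{s,\tau})$ (Lemma~\ref{alacon} supplies only the inequalities~\eqref{viveineg}--\eqref{onemore} that make this limit exist), and the strict positivity of $h_s$ is deduced at the very end from the main estimate applied to $\mu=\delta_x$, not from the lemma.
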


Before the proof, let us make two remarks. First,
under the assumption of Theorem~\ref{th:main}, we also prove that for all $t\geq s$,
\[\biggl\| \mu  M_{s,t}  -  \mu(h_s) \nu(m_{s,t})  \frac{ \gamma M_{s_0,t} }{ \gamma (m_{s_0,t})}  \biggr\|_{\mathrm{TV}} \leq  2(2+\alpha)\beta  \|\mu\|_{\mathrm{TV}}\,\nu(m_{s,t}) \, \e^{-C_{\alpha,\beta, \nu}(s,t)}.\]
This bound is thus valid for any time. Second, we can change the measure $\nu$ as follows.

\begin{Rq} \label{rq:A4}
Suppose that for all $\mu\in\M(\X_s)$ the function $t\mapsto\mu(m_{s,t})$ is continuous and the Assumptions of Theorem \ref{th:main} hold.
Then for all $\widetilde\nu\in\P(\X_s)$ there exists a constant $\widetilde\beta$ such that \eqref{as:A4I} is still valid if we replace $\nu$ and $\beta$ by $\widetilde\nu$ and $\widetilde\beta.$
Indeed Theorem~\ref{th:main} applied to $\mu=\widetilde\nu$
ensures that $\widetilde{\nu}(m_{s,t})/\nu(m_{s,t})\to\widetilde{\nu}(h_s)>0$ when $t\to\infty.$
Since $t\mapsto\nu(m_{s,t})/\widetilde\nu(m_{s,t})$ is continuous, it is bounded on $[s,+\infty).$
Using \eqref{as:A4I}, we deduce that for all $T\geq t_N$,
\[\|m_{s,T}\|_\infty\leq\beta\nu(m_{s,T})\leq\beta\sup_{t\geq s}\left(\frac{\nu(m_{s,t})}{\widetilde\nu(m_{s,t})}\right)\widetilde\nu(m_{s,T})=\widetilde\beta\,\widetilde\nu(m_{s,T}).\]
\end{Rq}

\subsection{Proof of Theorem \ref{th:main}}

We recall from the introduction the definition of $P^{(t)}.$
For any $t\geq u\geq s\geq 0,$ the linear operator $P^{(t)}_{s,u}:\B_b(\X_u)\to\B_b(\X_s)$ is defined by
\[P^{(t)}_{s,u}f=\frac{M_{s,u}(fm_{u,t})}{m_{s,t}}.\]
By duality we define a left action $P^{(t)}_{s,u}:\M(\X_s)\to\M(\X_u)$ by
\begin{equation}
\label{defP_left}
\forall f\in \B_b(\X_u),\qquad(\mu P^{(t)}_{s,u})(f):=\mu(P^{(t)}_{s,u}f)=\int_{\X_s} \mu (dx) \frac{M_{s,u}(fm_{u,t})(x)}{m_{s,t}(x)}.
\end{equation}
We  recall that this is a positive conservative semigroup.
Indeed we readily check that $P^{(t)}_{s,u}\,\1=\1$ and $P^{(t)}_{s,u}\,f\geq 0$ if $f\geq 0.$ 
Moreover 
$$
P^{(t)}_{s,u}(P^{(t)}_{u,v}f)=\frac{M_{s,u}\big((P^{(t)}_{u,v}f)m_{u,t}\big)}{m_{s,t}}\\
=\frac{M_{s,u}\Big(\frac{M_{u,v}(fm_{v,t})}{m_{u,t}}m_{u,t}\Big)}{m_{s,t}}
=P^{(t)}_{s,v}f.$$
It is also worth noticing that for all $t\geq s\geq0$ and all $x\in\X_s$
\begin{equation}\label{Pstt}
\delta_xP_{s,t}^{(t)}=\frac{\delta_xM_{s,t}}{m_{s,t}(x)}.
\end{equation}

The first key ingredient  is the following lemma, which gives
the ergodic behavior of the
auxiliary conservative  semigroup under  assumptions \eqref{as:A1I} and \eqref{as:A2I}.
This is an almost direct  generalization  of \cite{CV14}, which holds 
for homogeneous and sub-conservative (or sub-Markov) semigroups; namely $M_{s,t}=M_{t-s}$ and $M_t \mathbf{1} \leq \mathbf{1}$, for all $t\geq s\geq 0$. These semigroups are
associated to the evolution of absorbed (or killed) Markov processes (see Section \ref{appendix:proba}). This is also related to \cite[Chapter 12]{DelMoral2013} or \cite[Chapter 4.3.2]{DelM04}. The proof is given here for the sake of completeness. 

\begin{lem}[Doeblin contraction]
\label{denis}
Let  $0\leq s\leq t$ and $(c_i,d_i)_{1\leq i\leq N}$ satisfying  \eqref{as:A1I} and \eqref{as:A2I}
for the time subdivision $s\leq t_0\leq \ldots  \leq  t_{N} \leq t$. Let $\tau \geq t_N$.

(i) For any  $i=1, \ldots,  N$, there exists $\mu_i\in\P(\X_{t_i})$ such that for all $x\in\X_{t_{i-1}}$
\[
\delta_xP_{t_{i-1},t_{i}}^{(\tau)}\geq c_i d_i \mu_i.\]

(ii)  For any $\mu,\widetilde{\mu} $ finite measures on  $\X_s$,
\[\left\|\mu P_{s,\tau}^{(\tau)}-\widetilde{\mu} P_{s,\tau}^{(\tau)}\right\|_{\mathrm{TV}}\leq \prod_{i\leq N} (1-c_id_i)\bigl\|\mu -\widetilde{\mu}\bigr\|_{\mathrm{TV}}.\]

(iii) For any non-zero $\mu, \widetilde{\mu} \in\M_+(\X_s)$,
\begin{align*}
\biggl\|\frac{\mu M_{s,\tau}}{\mu (m_{s,\tau})}-\frac{\widetilde{\mu} M_{s,\tau}}{\widetilde{\mu}  (m_{s,\tau})}\biggr\|_{\mathrm{TV}}
&\leq 2\prod_{i\leq N} (1-c_i d_i).
\end{align*}
\end{lem}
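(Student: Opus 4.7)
The plan is three-stage, tracking parts (i), (ii), (iii): first prove local Doeblin minorizations for the conservative auxiliary semigroup $P^{(\tau)}$, then iterate them into a global total variation contraction, and finally translate that contraction back to the normalized $M$-evolution.

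For (i) the natural candidate is $\nu_i$ reweighted by the remaining mass,
\[\mu_i(f) := \frac{\nu_i(f\,m_{t_i,\tau})}{\nu_i(m_{t_i,\tau})}.\]
Starting from the definition $\delta_x P_{t_{i-1},t_i}^{(\tau)}f = M_{t_{i-1},t_i}(f\,m_{t_i,\tau})(x)/m_{t_{i-1},\tau}(x)$, I would lower-bound the numerator by $c_i\,m_{t_{i-1},t_i}(x)\,\nu_i(f\,m_{t_i,\tau})$ using \eqref{as:A1I} applied to the nonnegative test function $f\,m_{t_i,\tau}$. For the denominator the semigroup identity $m_{t_{i-1},\tau}=M_{t_{i-1},t_i}(m_{t_i,\tau})$ combined with \eqref{as:A2I} rewritten as $m_{t_i,\tau}\leq d_i^{-1}\,\nu_i(m_{t_i,\tau})$ yields $m_{t_{i-1},\tau}(x)\leq d_i^{-1}\,\nu_i(m_{t_i,\tau})\,m_{t_{i-1},t_i}(x)$. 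Dividing cleans up to $\delta_xP_{t_{i-1},t_i}^{(\tau)}f\geq c_id_i\,\mu_i(f)$.

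For (ii), I would factor $P_{s,\tau}^{(\tau)} = P_{s,t_0}^{(\tau)}\bigl(\prod_{i=1}^{N} P_{t_{i-1},t_i}^{(\tau)}\bigr)P_{t_N,\tau}^{(\tau)}$. Each factor is a Markov kernel (positive and conservative), hence a non-expansion in $\|\cdot\|_{\mathrm{TV}}$, so the two outer factors can be absorbed without loss. For every inner factor $K_i:=P_{t_{i-1},t_i}^{(\tau)}$, part~(i) provides the mixture decomposition $K_i(x,\cdot) = c_id_i\,\mu_i + (1-c_id_i)\,R_i(x,\cdot)$ with $R_i$ a Markov kernel, so $(\mu-\widetilde\mu)K_i = (1-c_id_i)(\mu-\widetilde\mu)R_i$ whenever $\mu-\widetilde\mu$ has zero total mass (which is automatic in the probability setting needed downstream). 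Iterating and using $\|(\mu-\widetilde\mu)R_i\|_{\mathrm{TV}}\leq \|\mu-\widetilde\mu\|_{\mathrm{TV}}$ produces the contraction factor $\prod_{i\leq N}(1-c_id_i)$.

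Finally, for (iii) the key observation is that reweighting by $m_{s,\tau}$ converts the normalized $M$-evolution into a pure $P^{(\tau)}$-evolution of probability measures. Setting $\pi:=m_{s,\tau}\mu/\mu(m_{s,\tau})\in\P(\X_s)$ and $\widetilde\pi$ analogously, a direct substitution using $m_{\tau,\tau}=\1$ shows $\pi P_{s,\tau}^{(\tau)} = \mu M_{s,\tau}/\mu(m_{s,\tau})$ and likewise for $\widetilde\pi$. Applying (ii) to the difference of the probability measures $\pi,\widetilde\pi$ and bounding $\|\pi-\widetilde\pi\|_{\mathrm{TV}}\leq 2$ finishes the argument. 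The point that needs the most care is the bookkeeping in (ii) for the cancellation of the $c_id_i\,\mu_i$ components when total masses differ; the reduction to probability measures in (iii) sidesteps this entirely.
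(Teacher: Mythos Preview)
Your proof is correct and tracks the paper's argument closely. Part~(i) is identical. For~(ii), the paper first establishes the contraction for Dirac masses via the same mixture decomposition (your residual kernel $R_i$ is the paper's $U_i$) and then extends to measures of equal total mass through a coupling identity $\|\mu P-\widetilde\mu P\|_{\mathrm{TV}}\leq\frac12\sup_{x,y}\|\delta_xP-\delta_yP\|_{\mathrm{TV}}\,\|\mu-\widetilde\mu\|_{\mathrm{TV}}$; your direct iteration on measures is equivalent and slightly more streamlined. For~(iii), the paper uses the pointwise identity $\delta_xP_{s,\tau}^{(\tau)}=\delta_xM_{s,\tau}/m_{s,\tau}(x)$ to get the Dirac bound and then integrates against $\mu$ and $\widetilde\mu$ with a triangle inequality; your reweighting $\pi=m_{s,\tau}\mu/\mu(m_{s,\tau})$ is a clean shortcut that lands directly in~(ii). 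Both routes are standard; yours is marginally more economical. Your caveat about~(ii) requiring equal total mass is apt---the paper's own proof explicitly assumes $\mu(\X)=\widetilde\mu(\X)$, and the lemma should be read with that in mind.
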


\begin{Rq}[Sharper bound]
In view of the proof below, one can replace  Lemma \ref{denis} (iii) by 
\begin{equation}
\label{eq:Wass}
\biggl\|\frac{\mu M_{s,\tau}}{\mu (m_{s,\tau})}-\frac{\widetilde{\mu} M_{s,\tau}}{\widetilde{\mu}(m_{s,\tau})}\biggr\|_{\mathrm{TV}}
\leq 2 \prod_{i\leq N} (1-c_id_i) \, \mathcal{W}_{s,t_N} (\mu,\widetilde{\mu}),
\end{equation}
where $\mathcal{W}_{s,t_N}$  is a  Wasserstein distance (see for instance \cite{Vil09}) defined by
$$
\mathcal{W}_{s,t_N} (\mu,\widetilde{\mu}) =\inf_{ \Pi} \frac{1}{\mu(m_{s,t_N}) \widetilde{\mu}(m_{s,t_N})} \int_{\X_s} m_{s,t_N}(y) m_{s,t_N}(x) \mathbf{1}_{x\neq y} \Pi(dx,dy).$$
and  the infimum runs over all coupling measures $\Pi$ of $\mu$ and $\widetilde{\mu}$; a coupling measure is a positive measure on $\X_s^2$ whose marginals are given by $\mu$ and $\widetilde{\mu}$. Even if the right-hand side of \eqref{eq:Wass} vanishes now when $\mu= \tilde{\mu}$, this bound depends on $t_N$ and incalculable quantities. However if there exists $A_s,B_s>0 $ such that $A_s \leq \sup_{\tau \geq s} \nu m_{s,\tau}/\mu m_{s,\tau}\leq B_s$ then Equation \eqref{eq:Wass} entails that
\begin{equation*}
\biggl\|\frac{\mu M_{s,\tau}}{\mu (m_{s,\tau})}-\frac{\widetilde{\mu} M_{s,\tau}}{\widetilde{\mu}(m_{s,\tau})}\biggr\|_{\mathrm{TV}}
\leq 2 \frac{B^2_s}{A^2_s}\prod_{i\leq N} (1-c_id_i) \, \Vert \mu - \widetilde{\mu} \Vert_{\mathrm{TV}}.
\end{equation*}
See Section \ref{sect:diff} and Inequality \eqref{eq:encad-m} for an example.
\end{Rq}

\begin{proof}[Proof of Lemma \ref{denis}]
\textit{Proof of (i).}
 Let $i\leq N$ and $f$ be a positive function of $\B_b(\X_{t_i})$. Using
\eqref{as:A1I}, we have
\be
\label{do1}
\delta_{x}M_{t_{i-1},t_{i}}(fm_{t_{i},\tau})\geq c_i \nu_i (fm_{t_{i},\tau})m_{t_{i-1},t_{i}}(x)=c_i\nu_i(fm_{t_{i},\tau})\frac{m_{t_{i-1},t_{i}}(x)}{m_{t_{i-1},\tau}(x)}m_{t_{i-1},\tau}(x).
\ee
Let us find $\mu_i$ satisfying
\be
\label{do2}
\nu_i(fm_{t_{i},\tau})\frac{m_{t_{i-1},t_{i}}(x)}{m_{t_{i-1},\tau}(x)}\geq d_i\mu_i(f).
\ee
Using  \eqref{as:A2I}, the  semigroup and positivity properties ensure that
$$d_i m_{t_{i-1},\tau}(x)=d_i \delta_x M_{t_{i-1},t_{i}}(m_{t_{i},\tau})\leq m_{t_{i-1},t_{i}}(x)
\nu_i(m_{t_{i},\tau}).$$
Thus
$$\nu_i(fm_{t_{i},\tau})\frac{m_{t_{i-1},t_{i}}(x)}{m_{t_{i-1},\tau}(x)}\geq d_i\mu_i(f),$$
where $\mu_i$ defined by
$$\mu_i(f)=\frac{\nu_i(fm_{t_{i},\tau})}{\nu_i(m_{t_{i},\tau})}$$
is a probability measure.
Recalling \eqref{defP_left}, $(i)$ follows from \eqref{do1} and \eqref{do2}.

\textit{Proof of (ii).}  We consider the  conservative linear operator $U_i$ on $\B_b(\X_{t_i})$, defined by
$$U_if(x):=\frac{P^{(\tau)}_{t_{i-1},t_i}f(x)-c_id_i\mu_{i}(f)}{1-c_id_i},$$
for $f\in \B_b(\X_{t_i})$ and $x\in\X_{t_{i-1}}$. It is positive by  $(i)$ and $\|U_i f\|_\infty\leq \|f\|_\infty$. Using
\begin{align*}
\delta_x P_{s,t_i}^{(\tau)}f- \delta_yP_{s,t_i}^{(\tau)}f 
&=\delta_xP^{(\tau)}_{s,t_{i-1}}P_{t_{i-1},t_i}^{(\tau)} f-\delta_yP^{(\tau)}_{s,t_{i-1}}P^{(\tau)}_{t_{i-1},t_i}f\\
&=(1-c_id_i)\bigl(\delta_x P^{(\tau)}_{s,t_{i-1}}(U_i f)-\delta_yP^{(\tau)}_{s,t_{i-1}}(U_i f)\bigr),
 \end{align*}
we get
  $$\|\delta_xP_{s,t_i}^{(\tau)}-\delta_yP_{s,t_i}^{(\tau)}\|_{\mathrm{TV}}\leq  (1-c_id_i)   \|\delta_xP_{s,t_{i-1}}^{(\tau)}-\delta_yP_{s,t_{i-1}}^{(\tau)} \|_{\mathrm{TV}}$$
since $\|U_i f\|_\infty\leq \|f\|_\infty$. Using that $P^{(\tau)}_{t_N,\tau}$ is also contraction since it is conservative, we obtain
 \be
 \|\delta_xP_{s,\tau }^{(\tau)}-\delta_yP_{s,\tau }^{(\tau)}\|_{\mathrm{TV}}\leq  2 \prod_{i\leq N} (1-c_id_i).
 \label{preborne}
 \ee
To conclude, we now check that for any conservative positive kernel $P$ on some $\X,$ any $\mu,\widetilde{\mu} \in \mathcal M(\X)$ such that $\mu(\X)=\widetilde{\mu}(\X)<\infty$,
\be
\label{dom}
\|\mu P-\widetilde{\mu} P\|_{\mathrm{TV}}
\leq\frac{1}{2} \sup_{x,y \in \X}\|\delta_xP-\delta_yP\|_{\mathrm{TV}}\|\mu-\widetilde{\mu}\|_{\mathrm{TV}} 
\leq \sup_{x,y \in \X}\|\delta_xP-\delta_yP\|_{\mathrm{TV}} .
\ee
Indeed, 
 $\mu P-\widetilde{\mu} P=(\mu-\widetilde{\mu})P= (\mu-\widetilde{\mu})_+P-(\widetilde{\mu}-\mu)_+P$
and $(\mu-\widetilde{\mu})_+(\X)=(\widetilde{\mu}-\mu)_+(\X),$ 
$$(\mu P-\widetilde{\mu}P)(f)=\frac1{(\mu-\widetilde{\mu})_+(\X)}\int_{\X^2}(\mu-\widetilde{\mu})_+(dx)(\widetilde{\mu}-\mu)_+(dy)\bigl(\delta_x Pf-\delta_y Pf\bigl),  $$
so  we get
$$
\|\mu P-\widetilde{\mu}P\|_{\mathrm{TV}}
\leq\sup_{x,y \in \X}\|\delta_xP-\delta_yP\|_{\mathrm{TV}}(\mu-\widetilde{\mu})_+(\X).$$
This  proves $\eqref{dom}$ since, by definition, $\|\mu-\widetilde{\mu}\|_{\mathrm{TV}}=(\mu-\widetilde{\mu})_+(\X) + (\widetilde{\mu}-\mu)_+(\X) = 2 (\mu-\widetilde{\mu})_+(\X)$ and yields $(ii)$. \\ 

\textit{Proof of (iii).}
Using now $\eqref{preborne}$ and recalling  (\ref{Pstt}), for any $x,y\in \X_s$, we have
\[\biggl\|\frac{\delta_xM_{s,\tau}}{m_{s,\tau}(x)}-\frac{\delta_yM_{s,\tau}}{m_{s,\tau}(y)}\biggr\|_{\mathrm{TV}}\leq 2 \prod_{i\leq N} (1-c_id_i).\]
Then for any nonzero $\mu\in\M_+(\X_s)$,
\begin{align*}
\biggl\|\frac{\mu M_{s,\tau}}{\mu (m_{s,\tau})}-\frac{\delta_yM_{s,\tau}}{m_{s,\tau}(y)}\biggr\|_{\mathrm{TV}}&=\frac{1}{\mu m_{s,\tau}}\biggl\|\mu M_{s,\tau}-\frac{\mu m_{s,\tau}}{m_{s,\tau}(y)}\delta_yM_{s,\tau}\biggr\|_{\mathrm{TV}}\\
&\leq\frac{1}{\mu (m_{s,\tau})}\int_{\X_s} \mu(dx) \biggl\|\delta_x M_{s,\tau}-  \frac{m_{s,\tau}(x)}{m_{s,\tau}(y)}\delta_yM_{s,\tau}\biggr\|_{\mathrm{TV}}\\
&=\frac{1}{\mu (m_{s,\tau})} \int_{\X_s} {\mu(dx)} m_{s,\tau}(x)\biggl\|\frac{\delta_x M_{s,\tau}}{m_{s,\tau}(x)}-  \frac{\delta_yM_{s,\tau}}{m_{s,\tau}(y)}\biggr\|_{\mathrm{TV}}\ 
\leq2 \prod_{i\leq N} (1-c_id_i).
\end{align*}
The inequality can  be similarly  extended  from $\delta_y$  to a finite measure  $\nu$, which proves $(iii)$.
\end{proof}

Now $\eqref{as:A3I} $  is involved to get the following non-degenerate bound for the mass.

\begin{lem} 
\label{alacon} Let $0\leq s\leq t$ and $(c_i,d_i)_{1\leq i\leq N}$ satisfying  \eqref{as:A1I},  \eqref{as:A2I}  and  \eqref{as:A3I} 
for the time subdivision $s\leq t_0\leq \ldots  \leq  t_{N}  \leq t$. For any $\tau\geq t_N$ and any measure $\mu \in \M_+(\X_s)$, we have
\be
\label{borne}
\frac{\mu M_{s,t_{N}}}{\mu m_{s,t_{N}}}\left(\frac{m_{t_{N},\tau}}{\| m_{t_{N},\tau} \|_{\infty}}\right) \geq \frac{1}{ \alpha}
\ee
and for any $x\in \X_s$,
\be
\label{viveineg}
\left\vert \frac{m_{s,\tau}(x)}{\mu(m_{s,\tau})}-\frac{ m_{s,t}(x)}{\mu(m_{s,t})}\right\vert \leq 2\alpha \frac{ m_{s,t_N}(x)}{\mu(m_{s,t_N})} \prod_{i\leq N} (1-c_id_i) .
\ee
If furthermore $2\alpha \prod_{i\leq N} (1-c_id_i) <1$, then  for any $x\in \X_s$,
\be
\label{onemore}
\left\vert \frac{m_{s,\tau}(x)}{\mu(m_{s,\tau})}-\frac{ m_{s,t}(x)}{\mu(m_{s,t})}\right\vert \leq  \frac{ m_{s,t}(x)}{\mu(m_{s,t})}\frac{2 \alpha  \prod_{i\leq N} (1-c_id_i)}{1-\alpha \prod_{i\leq N} (1-c_id_i)}.
\ee
\end{lem}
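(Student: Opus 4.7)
The plan is to prove \eqref{borne}, \eqref{viveineg} and \eqref{onemore} in this order, each inequality building on the previous. Throughout the argument, I work with the probability measures $\tilde\mu := \mu M_{s,t_N}/\mu(m_{s,t_N})$ and $\tilde\delta_x := \delta_x M_{s,t_N}/m_{s,t_N}(x)$ on $\X_{t_N}$, and with the normalized mass functions $g_r := m_{t_N,r}/\|m_{t_N,r}\|_\infty \in [0,1]$ defined for $r \geq t_N$.

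For \eqref{borne}, the idea is to combine \eqref{as:A1I} at step $i = N$ with \eqref{as:A3I}. Applied to the positive function $m_{t_N,\tau}$, \eqref{as:A1I} gives $m_{t_{N-1},\tau}(x) \geq c_N\, m_{t_{N-1},t_N}(x)\,\nu_N(m_{t_N,\tau})$ for every $x \in \X_{t_{N-1}}$, while \eqref{as:A3I} provides $\nu_N(m_{t_N,\tau}) \geq \|m_{t_N,\tau}\|_\infty/(\alpha c_N)$. These combine into the pointwise bound $m_{t_{N-1},\tau}(x) \geq \|m_{t_N,\tau}\|_\infty\, m_{t_{N-1},t_N}(x)/\alpha$. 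Integrating against the positive measure $\mu M_{s,t_{N-1}}$ and invoking the semigroup identities $\mu M_{s,t_{N-1}}(m_{t_{N-1},\tau}) = \mu(m_{s,\tau})$ and $\mu M_{s,t_{N-1}}(m_{t_{N-1},t_N}) = \mu(m_{s,t_N})$, one obtains $\mu(m_{s,\tau}) \geq \|m_{t_N,\tau}\|_\infty\, \mu(m_{s,t_N})/\alpha$, which rearranges to \eqref{borne}.

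For \eqref{viveineg}, the key factorization is
\[\frac{m_{s,r}(x)}{\mu(m_{s,r})} = \frac{M_{s,t_N}(m_{t_N,r})(x)}{\mu M_{s,t_N}(m_{t_N,r})} = \frac{m_{s,t_N}(x)}{\mu(m_{s,t_N})}\cdot \frac{\tilde\delta_x(g_r)}{\tilde\mu(g_r)}, \qquad r \geq t_N.\]
Setting $A := \tilde\delta_x(g_\tau)/\tilde\mu(g_\tau)$ and $B := \tilde\delta_x(g_t)/\tilde\mu(g_t)$, the left-hand side of \eqref{viveineg} becomes $(m_{s,t_N}(x)/\mu(m_{s,t_N}))\,|A-B|$. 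Since $g_r \geq 0$ and $\tilde\delta_x - \tilde\mu$ has zero total mass, the sharpened bound $|\eta(f)| \leq \tfrac12 \|\eta\|_{\mathrm{TV}}\|f\|_\infty$ from Section~\ref{ssect:preliminaries}, combined with Lemma~\ref{denis}(iii) applied at $\tau = t_N$ (which yields $\|\tilde\delta_x - \tilde\mu\|_{\mathrm{TV}} \leq 2\prod_{i\leq N}(1-c_id_i)$), gives $|(\tilde\delta_x - \tilde\mu)(g_r)| \leq \prod_{i\leq N}(1-c_id_i)$. Using \eqref{borne} to get $\tilde\mu(g_r) \geq 1/\alpha$, we deduce $|A - 1| \leq \alpha\prod_{i\leq N}(1-c_id_i)$ and identically $|B - 1| \leq \alpha\prod_{i\leq N}(1-c_id_i)$, so the triangle inequality gives $|A - B| \leq 2\alpha\prod_{i\leq N}(1-c_id_i)$, which proves \eqref{viveineg}.

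Finally, \eqref{onemore} follows from noting that $B$ equals the ratio $\eta_t(x)/\eta_{t_N}(x)$ with $\eta_r(x) := m_{s,r}(x)/\mu(m_{s,r})$. The assumption $2\alpha\prod_{i\leq N}(1-c_id_i) < 1$ implies $\alpha\prod_{i\leq N}(1-c_id_i) < 1$; combined with $|B - 1| \leq \alpha\prod_{i\leq N}(1-c_id_i)$ this forces $B \geq 1 - \alpha\prod_{i\leq N}(1-c_id_i) > 0$, whence $\eta_{t_N}(x) \leq \eta_t(x)/(1 - \alpha\prod_{i\leq N}(1-c_id_i))$. Substituting into \eqref{viveineg} yields \eqref{onemore}. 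The main subtlety in the whole proof is extracting the factor $\alpha$ rather than $2\alpha$ in the bound on $|B - 1|$: this uses both the factor-$\tfrac12$ improvement of the total variation inequality (valid because $g_r \geq 0$) and the evaluation of Lemma~\ref{denis}(iii) exactly at $\tau = t_N$. Without this refinement one would obtain only the weaker denominator $1 - 2\alpha\prod_{i\leq N}(1-c_id_i)$ in \eqref{onemore}.
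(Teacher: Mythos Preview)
Your proof is correct and follows essentially the same route as the paper's: the same factorization through $t_N$, the same use of \eqref{as:A1I}--\eqref{as:A3I} for \eqref{borne}, the same combination of Lemma~\ref{denis}(iii) at $\tau=t_N$ with the factor-$\tfrac12$ total-variation bound and \eqref{borne} to get the intermediate estimate $|A-1|,|B-1|\le\alpha\prod_i(1-c_id_i)$, and the same inversion of $B$ to pass from \eqref{viveineg} to \eqref{onemore}. The only cosmetic difference is that the paper phrases the intermediate step as a bound on $\bigl|\eta_\tau(x)-\eta_{t_N}(x)\bigr|$ (their inequality~\eqref{interm}) and then triangulates through $\eta_{t_N}$, whereas you bound $|A-1|$ and $|B-1|$ directly; the content is identical.
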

\begin{proof}
First,  using \eqref{as:A1I},
$$\frac{\mu M_{s,t_N}}{\mu(m_{s,t_N})}= \frac{\mu M_{s,t_{N-1}}M_{t_{N-1},t_N}}{\mu(m_{s,t_N})}\geq c_N  \frac{\mu M_{s,t_{N-1}}(m_{t_{N-1},t_N})}{\mu(m_{s,t_N})}  \nu_N =  c_N   \nu_N.$$
Moreover   \eqref{as:A3I} ensures that for any $\tau\geq t_N$,
$$\|m_{t_N,\tau}\|_\infty \leq \alpha c_N \nu_N (m_{t_N,\tau}).$$
This proves $\eqref{borne}$. Now, the semigroup property yields 
$$
\frac{m_{s,\tau}(x)}{\mu(m_{s,\tau} )}=\frac{\delta_{x}M_{s,t_N}M_{t_N,\tau}\mathbf{1}}{\mu M_{s,t_N} M_{t_N,\tau}\mathbf{1}}
= \frac{ m_{s,t_N}(x)}{\mu(m_{s,t_N})}  \frac{\frac{\delta_x M_{s,t_N}}{ m_{s,t_N}(x)}   (m_{t_N,\tau})}{ \frac{\mu M_{s,t_N}}{\mu(m_{s,t_N})}(m_{t_N,\tau})}.$$
Then
$$
\frac{m_{s,\tau}(x)}{\mu(m_{s,\tau})}-\frac{ m_{s,t_N}(x)}{\mu(m_{s,t_N})}= \frac{ m_{s,t_N}(x)}{\mu(m_{s,t_N})} 
\frac{\left[\frac{\delta_x M_{s,t_N}}{ m_{s,t_N}(x)}-\frac{\mu M_{s,t_N}}{ \mu(m_{s,t_N})}\right]    (m_{t_N,\tau})}{ \frac{\mu M_{s,t_N}}{\mu (m_{s,t_N})}(m_{t_N,\tau})}.
 $$
Dividing by $\| m_{t_N,\tau} \|_{\infty}$ and using  $m_{t_N,\tau}\geq 0$ and recalling that $\gamma(\X)=0$ and $f\geq0$ implies that $|\gamma(f)|\leq \frac12\left\|\gamma\right\|_{\mathrm{TV}}\|f\|_\infty$, we get
$$\left\vert \frac{m_{s,\tau}(x)}{\mu (m_{s,\tau})}-\frac{ m_{s,t_N}(x)}{\mu(m_{s,t_N})}\right\vert \leq  \frac{ m_{s,t_N}(x)}{\mu(m_{s,t_N})}
\frac{\frac{1}{2}\Big\|\frac{\delta_x M_{s,t_N}}{ m_{s,t_N}(x)}-\frac{\mu M_{s,t_N}}{\mu (m_{s,t_N})}\Big\|_{\mathrm{TV}}}{ \frac{\mu M_{s,t_N}}{\mu( m_{s,t_N})}\left(\frac{m_{t_N,\tau}}{\| m_{t_N,\tau} \|_{\infty}}\right)}.$$
Now combining Lemma \ref{denis} $(iii)$ and \eqref{borne} yields 
\be
\label{interm}
\left\vert \frac{m_{s,\tau}(x)}{\mu(m_{s,\tau})}-\frac{ m_{s,t_N}(x)}{\mu(m_{s,t_N})}\right\vert \leq   \alpha \frac{ m_{s,t_N}(x)}{\mu(m_{s,t_N})}\prod_{i\leq N} (1-c_id_i)
\ee
and using twice this bound proves
\eqref{viveineg} by triangular inequality.

Finally, \eqref{interm} also gives, for $\tau=t$,
$$\frac{ m_{s,t_N}(x)}{\mu(m_{s,t_N})} \leq    \frac{ m_{s,t}(x)}{\mu(m_{s,t})}\frac{ 1}{1- \alpha \prod_{i\leq N} (1-c_id_i)}. $$
Then \eqref{viveineg} implies \eqref{onemore}. 
\end{proof}

Using the previous results, we now prove the existence of harmonic functions and Theorem~\ref{th:main}.

\begin{proof}[Proof of Theorem~\ref{th:main}.]  
We  fix  $s\geq 0$, $\nu \in \P(\X_s)$  and $\beta>0$. We begin by proving that there exists a function $h_s$ positive and bounded  such that for any $x\in \X_s$ and any $t\geq s$,
\be
\label{maj}
 \bigg\vert \frac{m_{s,t}(x)}{\nu(m_{s,t})} - h_s(x)     
 \bigg\vert  \leq  2\alpha \e^{-C_{\alpha,\beta, \nu}(s,t)}\min \left\{\beta,  \frac{m_{s,t}(x)}{\nu(m_{s,t})} \frac{ 1}{\left(1-\alpha \e^{-C_{\alpha,\beta, \nu}(s,t)}\right)_+}\right\}.
 \ee
First, optimizing Inequality \eqref{viveineg} over all the admissible coupling constants yields
\begin{equation}
\label{eq:cauchyseq}
\left\vert \frac{m_{s,\tau}(x)}{\nu(m_{s,\tau})}-\frac{ m_{s,t}(x)}{\nu (m_{s,t})}\right\vert \leq 2\beta \alpha \e^{-C_{\alpha,\beta, \nu}(s,t)}
\end{equation}
by recalling  Definition \ref{eq:coupling-capacity} and that  \eqref{as:A4I}   guarantees $m_{s,t_N}(x)/\nu(m_{s,t_N})\leq \beta$. \\
Using that $C_{\alpha,\beta, \nu}(s,t)\rightarrow\infty$ as $t\rightarrow \infty$, Cauchy criterion  ensures that  the following limit exists
\be
\label{hsdef}
h_s(x)=\lim_{\tau\rightarrow \infty}   \frac{m_{s,\tau}(x)}{\nu (m_{s,\tau})}.
\ee
Moreover, letting $\tau\rightarrow \infty$ in \eqref{eq:cauchyseq} shows that
$$\left\vert \frac{ m_{s,t}(x)}{\nu(m_{s,t})}-h_s(x)\right\vert \leq 2\beta \alpha \e^{-C_{\alpha,\beta, \nu}(s,t)}.$$
Optimizing  now similarly over coupling constants in \eqref{onemore} and letting $\tau\rightarrow \infty$  yields
$$\left\vert\frac{ m_{s,t}(x)}{\nu(m_{s,t})}-h_s(x)\right\vert \leq   \frac{ m_{s,t}(x)}{\nu(m_{s,t})} \frac{ 2\alpha \e^{-C_{\alpha,\beta, \nu}(s,t)}}{1-\alpha \e^{-C_{\alpha,\beta, \nu}(s,t)}}, $$
for any $t$ such that $\alpha \exp(-C_{\alpha,\beta, \nu}(s,t))<1$.
Combining these two bounds proves $(\ref{maj})$.

Integrating  $(\ref{maj})$ over some $\mu\in \M_+(\X_s)$, we get
\be
\label{controlmoyenne}
 \big\vert \mu (m_{s,t}) - \mu(h_s) \nu(m_{s,t})     \big\vert  \leq 2 \alpha \min \left\{\beta \mu(\X) \nu(m_{s,t}),  \frac{\mu (m_{s,t})}{(1-\alpha \e^{-C_{\alpha,\beta, \nu}(s,t)})_+}\right\}  \e^{-C_{\alpha,\beta, \nu}(s,t)}.
 \ee
Moreover Lemma~\ref{denis} $(iii)$ yields (after optimization over coupling constants) for  non-zero $\mu$,
\be
\label{optim}
\biggl\|\frac{\mu M_{s,t}}{\mu (m_{s,t})}-\frac{\nu M_{s,t}}{\nu (m_{s,t})}\biggr\|_{\mathrm{TV}}\leq 2 \e^{-C_{\alpha,\beta, \nu}(s,t)} 
\ee 
and 
combining the two previous inequalities  
gives
\begin{align*}
&\biggl\| \mu  M_{s,t}  -  \mu(h_s) \nu(m_{s,t})  \frac{ \nu M_{s,t} }{ \nu(m_{s,t})} \biggr\|_{\mathrm{TV}} \\
&\qquad\leq \biggl\| \mu  M_{s,t}  -  \mu(m_{s,t})\frac{\nu M_{s,t}}{\nu(m_{s,t})}\biggr\|_{\mathrm{TV}}+
\big|\mu(m_{s,t}) - \mu(h_s)\nu(m_{s,t})\big| \left\|\frac{\nu M_{s,t} }{\nu(m_{s,t})}\right\|_{\mathrm{TV}}\\
&\qquad \leq  2 \left(\mu(m_{s,t})+ \alpha \min \left\{\beta \mu(\X) \nu(m_{s,t}),  \frac{\mu (m_{s,t})}{(1-\alpha \e^{-C_{\alpha,\beta, \nu}(s,t)})_+}\right\}\right)\e^{-C_{\alpha,\beta, \nu}(s,t)}.
\end{align*}

Using again Inequality \eqref{optim}, with $\mu= \gamma M_{s_0,s}$, we obtain
\begin{align}
\bigg\| &\mu  M_{s,t}  -  \mu(h_s)\nu(m_{s,t}) \frac{ \gamma M_{s_0,t} }{ \gamma(m_{s_0,t})}  \bigg\|_{\mathrm{TV}}  \label{exp} \\
& \leq 2 \left(\mu(m_{s,t})+  \mu(h_s) \nu(m_{s,t})+ \alpha \min \left\{\beta \mu(\X) \nu(m_{s,t}),  \frac{\mu (m_{s,t})}{(1-\alpha \e^{-C_{\alpha,\beta, \nu}(s,t)})_+}\right\}\right)\e^{-C_{\alpha,\beta, \nu}(s,t)}. \nonumber
\end{align}
To conclude it remains to control $\mu(m_{s,t})$ and $\mu(h_s)$. First, we notice that $h_s$ is bounded by $\beta$ using \eqref{as:A4I} and \eqref{hsdef}.
Using again (\ref{as:A4I}), we have
$$
\mu(m_{s,t})\leq \beta \mu(\X)  \nu(m_{s,t})
$$
and the first bound of \eqref{exp}  yields
\begin{equation*}
\biggl\| \mu  M_{s,t}  -  \mu(h_s) \nu(m_{s,t})  \frac{ \gamma M_{s_0,t} }{ \gamma(m_{s_0,t})} \biggr\|_{\mathrm{TV}} 
 \leq  2(2 +\alpha)\beta \mu(\X)  \nu(m_{s,t}) \e^{-C_{\alpha,\beta, \nu}(s,t)}.
\end{equation*}
Moreover,  if $C_{\alpha,\beta, \nu}(s,t)> \log(3\alpha)$, using the second part of (\ref{controlmoyenne}) and the fact that $\vert a-b\vert \leq \eta |b|$ and $\eta\in[0,1)$ imply that $|b|\leq |a|/(1-\eta)$
ensures that   
$$
\mu(m_{s,t}) \leq  \frac{1- \alpha \e^{-C_{\alpha,\beta, \nu}(s,t)}}{1- 3 \alpha \e^{-C_{\alpha,\beta, \nu}(s,t)}} \mu(h_s)  \nu(m_{s,t}),
$$
so that the second part of~\eqref{exp} becomes
\begin{align*}
\biggl\| \mu  M_{s,t}  -  \mu(h_s) \nu(m_{s,t})  \frac{ \gamma M_{s_0,t} }{ \gamma(m_{s_0,t})} \biggr\|_{\mathrm{TV}} 
 &\leq  2 \frac{2+\alpha- 4\alpha \e^{-C_{\alpha,\beta, \nu}(s,t)}}{1- 3 \alpha \e^{-C_{\alpha,\beta, \nu}(s,t)}} \mu(h_s)  \nu(m_{s,t}) \e^{-C_{\alpha,\beta, \nu}(s,t)}.
\end{align*}
This proves the estimate stated in Theorem \ref{th:main} when $C_{\alpha,\beta, \nu}(s,t)\geq \log(4\alpha)$.
Finally, this estimate applied to   $\mu=\delta_x$ ensures that
\[ |m_{s,t}(x)  -  h_s(x) \nu(m_{s,t})| \leq  8(2+\alpha)  h_s(x)\,\nu(m_{s,t}) \, \e^{-C_{\alpha,\beta, \nu}(s,t)}\]
and then
\[ \big(1 + 8(2+\alpha) \e^{-C_{\alpha,\beta, \nu}(s,t)}\big)h_s(x)\geq \frac{m_{s,t}(x)}{\nu(m_{s,t})}>0,\]
so that $h_s>0$.
The fact that $\nu(h_s)=1$ follows directly from~\eqref{hsdef} and dominated convergence theorem, while uniqueness of $h_s$ is derived letting $t$ go to infinity.
\end{proof}

\section{Applications}
\label{applis}

In the present section, we develop different applications of Theorem \ref{th:main}.
We aim at  illustrating the main result and show how to check the required assumptions.
Yet we also obtain new estimates and mention that the models  can be made more complex.

We first  consider the heat equation with growth and reflecting boundary on the compact set $[0,1]$ and time space inhomogeneity. 
The coupling capacity is then expressed in terms of the function describing the diffusion coefficient. \\ 
Then we prove general statements when the semigroup is homogeneous, asymptotically homogeneous, and periodic. 
The results  are illustrated by asymptotic estimates for the renewal equation.

\subsection{A growth-diffusion equation with reflecting boundary and varying environment}
\label{sect:diff}

In this section $\X_t=\X= [0,1]$ for every $t\geq 0$. We consider a population of particles which reproduce and move following a diffusion varying in time. The evolution of its density is prescribed by the following PDE
\begin{equation}
\label{eq:heat}
\left \{
\begin{array}{r @{\ =\ } l}
    \partial_t u_{s,t}(x) & {1 \over 2} \sigma_t \Delta u_{s,t}(x) + r(x) u_{s,t}(x), \qquad 0 < x < 1, 
    \vspace{2mm}\\
    \partial_x u_{s,t}(0) & \partial_x u_{s,t}(1)= 0,
    \vspace{2mm}\\
   u_{s,s}(x)& \phi(x),
\end{array}
\right.
\end{equation}
for some $\phi \in L^1([0,1]).$ As usual, we do not stress the dependence on $\phi$ of $u.$
This equation is the nonautonomous Heat Equation with growth under Neumann boundary conditions.
More precisely, particles diffuse with coefficients $(\sigma_t)_{t\geq 0}$ on the space $[0,1]$.
The growth rate $r(x)$ is the difference between birth and death rate at position $x\in [0,1].$

In this example,  $\sigma$ is time-dependent but not space-dependent and conversely for $r$. Our   coupling methods  provide a relevant approach for estimating the speed of convergence in this varying environment case. This analysis could be easily generalized for both time-dependant and space-dependant parameters but would provide tedious computations, so it is left for future works.

Let us work with another representation of the solution of \eqref{eq:heat}. Let $(X^x_{s,t})_{t\geq s}$ be a reflected Brownian motion on $[0,1]$ starting from $x$ at time $s$, with diffusion coefficient $\sigma_t$ at time $t$, see \eqref{defX} below for a construction.  We define the positive semigroup $M$ by
\begin{equation}
\label{eq:FK}
M_{s,t} f = \mathbb{E}_{x}\left[ f(X^x_{s,t}) e^{\int_s^t r(X^x_{s,u}) du} \right]
\end{equation}
for every bounded Borel function $f$ on $[0,1],$ and $t\geq s \geq 0.$
Then $\mu M_{s,t}$ is defined by setting for all $f\in \B_b([0,1])$
\[(\mu M_{s,t})(f)=\mu(M_{s,t}f).\]
 Feynman-Kac formula \cite[Chapter VII Proposition (3.10) p.358]{RY} states the duality relation of this semigroup with the solution $u$ of \eqref{eq:heat}:  
$$\int_0^1 \phi(x) M_{s,t} f(x) dx =\int_0^1 u_{s,t}(x)f(x)dx$$
for every bounded measurable function $f.$
This property allows to see the mapping $t\to\mu M_{s,t}$ as the unique solution to Equation~\eqref{eq:heat} when the initial density $\phi$ is replaced by a measure $\mu.$

\subsubsection{Statements}\

We assume that $t\mapsto \sigma_t$ is a non-negative and c\`adl\`ag function, $r$ is continuous and
$$
-\infty<\underline{r}:=\inf_{x \in [0,1]}  r(x); \qquad  \overline r:=\sup_{x\in [0,1]} r(x) <+ \infty.
$$
Introduce the  function $g$ with value in $\mathbb{R} \cup \{+\infty \}$ to measure the coupling capacity in function of the parameters
$$g :(s,t) \mapsto  (\overline{r}-\underline{r})(t-s)-\log\left(\left(1-4/\sigma_{s,t} \right)_+ \right),$$
where
$$\sigma_{s,t}:=\sqrt{2\pi \int_{s}^{t} \sigma_u^2du}.$$
These functions allow to control  the coupling capacity in this model by considering
$$\mathfrak C_{\tau,\rho}(s,t)=\sup_{  \mathcal T_{\tau, \rho}(s,t)} \left\{ -\sum_{i=1}^{N} \log\Big(1- \exp\big(-\big(g(t_{i-1},t_{i})+g(t_{i},t_{i+1})\big)\big)\Big)\right\},$$
where $\mathcal T_{\tau, \rho}(s,t)$ is the set of subdivisions $(t_i)_{i=0}^{N+1}$ such that  $N\geq  1$, $s=t_0 \leq \cdots \leq t_{N+1}\leq t$ and
$$  t_1- t_{0}\leq \rho, \  t_{N}- t_{N-1}\leq \tau, \  t_{N+1}- t_{N}\leq \tau \ \  \text{ and } \ \  \text{for } i\in \{0,N-1,N\},    \ \sigma_{t_{i},t_{i+1}} \geq 5.$$
 Indeed, $\mathfrak C_{\tau,\rho}(s,t)$ is  a lower bound of \eqref{eq:coupling-capacity}. The constant $5$ may be  improved  and replaced for instance by $4 + \varepsilon$, but we restrict ourselves here to this value  which allows to get a simple expression  of the coupling constants $\alpha$ and $\beta$, namely 
 $\alpha=\gamma_{\tau}^2,  \beta=\gamma_{\rho}\gamma_{\tau}$
 where
 $$ \gamma_{s}=5\exp( (\overline{r}-\underline{r})s) \in [5,\infty).$$
  The first time interval of size $\rho$ is involved in the control of the mass and the expression of $\beta$. A general quantitative bound
 can now be given as follows, writing $\lambda$  the Lebesgue measure on $[0,1]$.

\begin{theo} \label{th:reflected-mb}
Let $s\geq 0$ and $\tau>0$. Assume that $\mathfrak C_{\tau,\rho}(s,t)\rightarrow \infty$ as $t\rightarrow \infty$.
Then there exists a function $h_s : [0,1] \rightarrow (0, \gamma_{\rho}\gamma_{\tau}]$  and  probabilities  $(\pi_{t})_{t\geq0}$ such that
$$ \biggl\| \mu  M_{s,t}  -  \mu(h_s) \lambda(m_{s,t}) \pi_{t}  \biggr\|_{\mathrm{TV}} \leq 8(2+\gamma_{\tau}^2) \vert \mu \vert (h_s) \lambda(m_{s,t}) \e^{-\mathfrak C_{\tau, \rho}(s,t)},$$
for any $\mu\in \mathcal M([0,1])$ and $t$ such that $\mathfrak C_{\tau,\rho}(s,t) \geq 2\log(2)+2\log(\gamma_{\tau})$. 
\end{theo}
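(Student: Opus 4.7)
The plan is to apply Theorem~\ref{th:main} with reference probability $\nu=\nu_i=\lambda$ (Lebesgue measure on $[0,1]$) and constants $\alpha=\gamma_\tau^2$, $\beta=\gamma_\rho\gamma_\tau$, lower-bounding the coupling capacity $C_{\alpha,\beta,\lambda}(s,t)$ of~\eqref{eq:coupling-capacity} by $\mathfrak C_{\tau,\rho}(s,t)$. For any subdivision $(t_i)_{i=0}^{N+1}\in\mathcal T_{\tau,\rho}(s,t)$, I will use $(t_i)_{i=0}^N$ as the time subdivision required in Definition~\ref{def:coupling_constants} and take
\[c_i=\e^{-g(t_{i-1},t_i)},\qquad d_i=\e^{-g(t_i,t_{i+1})},\qquad 1\leq i\leq N,\]
so that $-\sum_{i=1}^N\log(1-c_id_i)$ equals the expression supremized in $\mathfrak C_{\tau,\rho}(s,t)$. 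Once (A1)--(A4) of Definition~\ref{def:coupling_constants} are checked with these constants, the conclusion of Theorem~\ref{th:main} gives the announced estimate, with $\pi_t:=\gamma M_{s_0,t}/\gamma(m_{s_0,t})$ for any auxiliary $\gamma\in\P([0,1])$ and $s_0\leq s$; the quantitative threshold $\mathfrak C_{\tau,\rho}(s,t)\geq 2\log 2+2\log\gamma_\tau$ translates exactly into $C_{\alpha,\beta,\nu}(s,t)\geq\log(4\alpha)$ with $\alpha=\gamma_\tau^2$.

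The central auxiliary estimate is the Doeblin-type inequality
\[\delta_xM_{u,v}\geq \e^{-g(u,v)}\,m_{u,v}(x)\,\lambda\qquad\text{for every }0\leq u<v,\ x\in[0,1].\]
To derive it I would combine: (i) the pointwise lower bound $p^X_{u,v}(x,y)\geq(1-4/\sigma_{u,v})_+$ on the transition density of the reflected Brownian motion with time-varying diffusion, which follows from the folding/reflection representation $X=\phi(W)$ (with $\phi:\R\to[0,1]$ the sawtooth map and $W$ a real BM of variance $\int\sigma^2$) or equivalently from the Jacobi-theta expansion of the Neumann heat kernel at effective time $\int_u^v\sigma_t^2\,dt=\sigma_{u,v}^2/(2\pi)$; and (ii) the elementary bounds $\e^{\underline r(v-u)}\leq m_{u,v}(x)\leq \e^{\overline r(v-u)}$ stemming from $\underline r\leq r\leq \overline r$ inside the Feynman--Kac representation~\eqref{eq:FK}. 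This lemma directly yields (A1) on each subinterval $[t_{i-1},t_i]$.

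For (A2)--(A4) I would write $m_{t_i,\tau}=M_{t_i,t_{i+1}}m_{t_{i+1},\tau}$ and exploit the mixing on $[t_i,t_{i+1}]$. The lower bound $\lambda(m_{t_i,\tau})\geq \e^{\underline r(t_{i+1}-t_i)}\lambda(m_{t_{i+1},\tau})$ comes from the Lebesgue-invariance $\int p^X_{t_i,t_{i+1}}(y,z)\,dy=1$ of the reflected BM, while a matching density ceiling $\|p^X_{u,v}(x,\cdot)\|_\infty\leq(1-4/\sigma_{u,v})_+^{-1}$ for $\sigma_{u,v}>4$ (again from the theta expansion) gives the upper bound $m_{t_i,\tau}(x)\leq \e^{\overline r(t_{i+1}-t_i)}(1-4/\sigma_{t_i,t_{i+1}})_+^{-1}\lambda(m_{t_{i+1},\tau})$; taking the ratio is precisely (A2) with our $d_i$. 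For (A3) and (A4) the constraint $\sigma\geq 5$ imposed in $\mathcal T_{\tau,\rho}$ forces $(1-4/\sigma)_+^{-1}\leq 5$, and combined with the exponential factor $\e^{(\overline r-\underline r)\rho}$ (resp.\ $\e^{(\overline r-\underline r)\tau}$) coming from the length constraints on the first interval (resp.\ on $[t_{N-1},t_N]$ and $[t_N,t_{N+1}]$), this produces $\beta=\gamma_\rho\gamma_\tau$ and $\alpha=\gamma_\tau^2$.

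The main obstacle I foresee is verifying (A2) with $d_i$ \emph{uniform} in $\tau\geq t_N$: the crude estimate $m_{t_i,\tau}(x)/\lambda(m_{t_i,\tau})\leq \e^{(\overline r-\underline r)(\tau-t_i)}$ from the Feynman--Kac pointwise bounds alone degenerates as $\tau\to\infty$, so one really needs to exploit the mixing of the reflected BM on a single interval of bounded length. This forces the use of a two-sided density control on the Neumann heat kernel, where the compactness of $[0,1]$ is essential; the particular threshold $5$ (rather than some $4+\varepsilon$) in the definition of $\mathcal T_{\tau,\rho}$ is what keeps the final constants in the clean closed form $\alpha=\gamma_\tau^2$, $\beta=\gamma_\rho\gamma_\tau$.
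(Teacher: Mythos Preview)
Your plan is correct and follows essentially the same route as the paper's proof: the paper also verifies (A1)--(A4) with $\nu_i=\nu=\lambda$, the same coupling constants $c_i=\e^{-g(t_{i-1},t_i)}$, $d_i=\e^{-g(t_i,t_{i+1})}$, and derives the same $\alpha,\beta$ from the constraints $\sigma\geq5$ and the length bounds $\rho,\tau$ on the first and last intervals, then applies Theorem~\ref{th:main}. The only cosmetic difference is that for the lower bound on $\lambda(m_{t_i,\tau})$ the paper reuses the pointwise density lower bound (so the unknown $c_{s,t}$ cancels in the ratio), whereas you invoke Lebesgue invariance of the reflected Brownian motion directly; both yield the same $d_i$.
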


The proof of Theorem \ref{th:reflected-mb} is postponed to Subsection \ref{sect:proof-diff}. Let us now illustrate this result by constructing a relevant lower bound of $\mathfrak C_{\tau,\rho}(s,t)$. Let $s\geq 0, \tau>0$ and  set
$$t_1(s,\tau)=\inf\{ u \geq s :  \sigma_{s,u} \geq 10.\}$$
and the sequence $(t_k(s,\tau))_k$ defined by induction :
$$t_{k+1}(s,\tau)=\inf\{ u\geq t_k(s,\tau)+\tau : \sigma_{u,u+\tau} \geq 10\} \qquad (k\geq 1),$$
using again the convention $\inf\varnothing=+ \infty$.
As $(s,t)\mapsto \sigma_{s,t}$ is continuous,  there exists $t_k'(s,\tau)$ such that
$$\sigma_{ t_k(s,\tau), t_k'(s,\tau)}\geq 5 \ \text{ and } \ \sigma_{t_k'(s,\tau), t_k(s,\tau)+\tau}\geq 5.$$
Using then the time subdivision $s,t_1(s,\tau),..., t_k(s,\tau), t_k'(s,\tau), t_k(s,\tau)+\tau,...$  $(k\geq 2)$, we get an upper  bound for $g(t_k'(s,\tau)-t_k(s,\tau))$ and $g(t_k(s,\tau)+\tau-t_k'(s,\tau)) $and
$$
\mathfrak C_{\tau,t_1(s,\tau)-s}(s,t)\geq -\log(1-1/\gamma_{\tau}^2) \max\{ N : t_{N+1}(s,\tau) \leq t-\tau\}.
$$
We  derive then immediately a  speed of convergence from Theorem \ref{th:reflected-mb}.

\begin{coro} 
Let $s\geq 0$ and $\tau>0$. Assume that $t_k(s,\tau)<\infty$ for any $k\geq 1$.\\
Then there exists a positive bounded function $h_s$ and probabilities $(\pi_t)_{t\geq 0}$ such that 
$$\liminf_{t\to \infty} -\frac{1}{t} \log  \biggl\| \frac{\mu  M_{s,t}}{\lambda(m_{s,t})}   -  \mu(h_s)  \pi_t \biggr\|_{\mathrm{TV}} \geq  -\log(1-1/\gamma_{\tau}^2).
\liminf_{k\rightarrow \infty} \frac{k}{t_k(s,\tau)},$$
uniformly over $\mu \in \P([0,1])$. Moreover $h_s$ is bounded by $\gamma_{\tau} \gamma_{t_1(s,\tau)-s}$. 
\end{coro}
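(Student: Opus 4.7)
The plan is to specialize Theorem \ref{th:reflected-mb} to the subdivision constructed in the paragraph preceding the corollary, and then convert the resulting explicit bound into the stated exponential rate by a simple comparison between the counting function of the subdivision and the sequence $(t_k(s,\tau))_k$.

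First, I would set $\rho:=t_1(s,\tau)-s$, which is finite by assumption, and verify that the subdivision $s=t_0,\,t_1(s,\tau),\,t_1'(s,\tau),\,t_1(s,\tau)+\tau,\,t_2(s,\tau),\,t_2'(s,\tau),\,t_2(s,\tau)+\tau,\ldots$ up to the last step lying in $[s,t]$ belongs to $\mathcal T_{\tau,\rho}(s,t)$. The paragraph preceding the corollary already performs this check and yields the lower bound
\[\mathfrak C_{\tau,\rho}(s,t)\ \geq\ -\log(1-1/\gamma_\tau^2)\cdot N(t),\qquad N(t):=\max\{N : t_{N+1}(s,\tau)\leq t-\tau\}.\]
Since $t_{k+1}(s,\tau)\geq t_k(s,\tau)+\tau$ by construction, the finiteness assumption forces $t_k(s,\tau)\to\infty$, hence $N(t)\to\infty$ and $\mathfrak C_{\tau,\rho}(s,t)\to\infty$. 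So the hypotheses of Theorem \ref{th:reflected-mb} are met.

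Next, I apply Theorem \ref{th:reflected-mb} with this choice of $\rho$. For $\mu\in\P([0,1])$, one has $|\mu|(h_s)\leq\|h_s\|_\infty\leq\gamma_\rho\gamma_\tau=\gamma_{t_1(s,\tau)-s}\gamma_\tau$, which already gives the last claim of the corollary about the bound on $h_s$. Dividing the total variation estimate by $\lambda(m_{s,t})$ then produces, for all $t$ large enough and uniformly in $\mu\in\P([0,1])$,
\[\biggl\|\frac{\mu M_{s,t}}{\lambda(m_{s,t})}-\mu(h_s)\pi_t\biggr\|_{\mathrm{TV}}\ \leq\ 8(2+\gamma_\tau^2)\gamma_\rho\gamma_\tau\,\e^{-\mathfrak C_{\tau,\rho}(s,t)}.\]
Taking $-\frac{1}{t}\log$, the prefactor contributes $O(1/t)\to 0$, and using the lower bound of the first step it only remains to show
\[\liminf_{t\to\infty}\frac{N(t)}{t}\ \geq\ \liminf_{k\to\infty}\frac{k}{t_k(s,\tau)}.\]

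For this last step, observe that by definition of $N(t)$ we have $t_{N(t)+2}(s,\tau)>t-\tau$, so writing $k=N(t)+2$ (with $k\to\infty$ as $t\to\infty$),
\[\frac{N(t)}{t}\ >\ \frac{k-2}{t_k(s,\tau)+\tau}\ =\ \frac{k}{t_k(s,\tau)}\cdot\frac{1-2/k}{1+\tau/t_k(s,\tau)},\]
and since $t_k(s,\tau)\to\infty$, the second factor tends to $1$; taking $\liminf$ yields the required inequality. This is the only slightly delicate point in the argument, but it is routine. Combining everything finishes the proof.
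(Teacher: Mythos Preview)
Your argument is correct and follows exactly the route the paper indicates: apply Theorem~\ref{th:reflected-mb} with $\rho=t_1(s,\tau)-s$, use the lower bound $\mathfrak C_{\tau,\rho}(s,t)\geq -\log(1-1/\gamma_\tau^2)\,N(t)$ from the preceding paragraph, and translate this into the claimed exponential rate via the elementary $\liminf$ comparison. The only point to clean up is cosmetic: your written subdivision includes $t_1'(s,\tau)$ and $t_1(s,\tau)+\tau$, but in the paper's construction $t_1$ plays a different role (it anchors the first interval $[s,t_1]$ of length $\rho$), and the pattern $t_k,t_k',t_k+\tau$ only starts at $k\geq 2$; since you defer to the paper for this verification anyway, it does not affect the validity of your proof.
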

As soon as $\limsup_{i} t_i(s,\tau)/i<\infty$, we obtain an exponential speed. 
Note also that superexponential or subexponential  speed could be obtained by alternative  constructions of time sequences.

As an application for exponential convergence in random environment, let us consider a Feller c\`adl\`ag Markov process  $(t,w)\in [0, \infty)\times \Omega \rightarrow \sigma_t(w)\in [0,\infty)$  on a probability space $(\Omega, \mathcal F,  \mathbb P)$. We
assume that the process $(\sigma_t)_{t\geq 0}$  is Harris positive reccurent with stationary probability $\pi\neq \delta_0$. For each $w\in \Omega$, we write $M_{s,t}=M_{s,t}(w)$ the semigroup defined by \eqref{eq:FK} and associated to the diffusion coefficient $(\sigma_t(w))_{t\geq 0}$. 
By a Birkhoff Theorem (see \cite[Theorem~8.1]{MTII} and \cite[Theorem~3.2]{MTII}), we obtain that $\max\{ N : t_{N+1}(s,\tau) \leq t-\tau\}$ grows linearly as $t\rightarrow\infty$ with a deterministic speed. It yields the following quenched estimate: there exists $v>0$ such that
$$\mathbb P\left(\liminf_{t\to \infty} -\frac{1}{t} \log \left(\sup_{\mu \in  \mathcal P([0,1]) }  \biggl\| \frac{\mu  M_{s,t}}{\lambda(m_{s,t})}   -  \mu(h_s)  \pi_t  \biggr\|_{\mathrm{TV}}\right) \geq v \right)=1.$$
and the convergence is  uniform and (at least) exponential. Let us observe that the diffusion may be zero for arbitrarily large time intervals.

\subsubsection{Proof of Theorem \ref{th:reflected-mb}}
\label{sect:proof-diff}
We begin by the construction and some useful estimates for  process  $(X^x_{s,t})_{s,t}$, which is  associated to a nonhomogeneous
diffusion $[0,1]$ with Neumann boundary condition. Let $(B_t)_{t\geq 0}$ be a classical Brownian motion on $\mathbb{R}$, and set $(W^x_{s,t})_{t\geq s \geq 0}$ defined for all $x\in [0,1]$ and $t\geq s\geq 0$ by
\be
\label{defX}
W^x_{s,t} = x + \int_s^t \sigma_u dB_u.
\ee
The random variable $W_{s,t}^x$ is distributed according to a Gaussian law $\mathcal{N}\left(x,\sigma_{s,t}\right).$ The reflected process $(X^x_{s,t})$ can now be defined by
$$
\forall t\geq s \geq 0, \quad X^x_{t,s} =\sum_{n \in \mathbb{Z}} (W^x_{s,t}-2n) \mathbf{1}_{W^x_{s,t} \in [2n,2n+1]} + (2n-W^x_{s,t}) \mathbf{1}_{W^x_{s,t} \in [2n-1,2n]}.
$$

\begin{lem}[Bounds on the density for the diffusion]
\label{lem:diffbound} For any $t> s \geq 0$, there exists $c_{s,t}\geq 1$ such that for any Borel set $A$ of $[0,1]$,
$$ 
\left(c_{s,t}-\frac{4}{\sigma_{s,t}} \right)_{+} \lambda(A) \leq  \mathbb{P}(X^x_{s,t} \in A) \leq c_{s,t} \lambda(A),
$$
with the convention $1/0=\infty$.
\end{lem}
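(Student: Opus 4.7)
The plan is to compute the density $q_x$ of $X^x_{s,t}$ with respect to Lebesgue measure on $[0,1]$ explicitly, expand it in a Fourier series via Poisson summation, and read off both bounds directly. Since $W^x_{s,t}$ is Gaussian with density $\phi_x(z) = \sigma_{s,t}^{-1}\exp(-\pi(z-x)^2/\sigma_{s,t}^2)$ (the normalization $\sigma_{s,t}^2 = 2\pi\int_s^t\sigma_u^2\,\d u$ is chosen precisely so the density takes this form), and $X^x_{s,t}$ is obtained from $W^x_{s,t}$ by the folding map, the density of $X^x_{s,t}$ on $[0,1]$ is the lattice sum
\[q_x(y) = \sum_{n\in\Z}\bigl[\phi_x(y+2n) + \phi_x(2n-y)\bigr].\]

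Writing $a := \sigma_{s,t}$, applying the Poisson summation formula to each of these two sums, followed by the sum-to-product identity $\cos A + \cos B = 2\cos\tfrac{A+B}{2}\cos\tfrac{A-B}{2}$, yields the Fourier expansion
\[q_x(y) = 1 + 2\sum_{k\geq 1}\e^{-\pi a^2k^2/4}\cos(\pi kx)\cos(\pi ky).\]
Setting $c_{s,t} := 1 + 2S(a)$ with $S(a) := \sum_{k\geq 1}\e^{-\pi a^2k^2/4}$, the trivial bound $|\cos\cdot\cos|\leq 1$ gives $q_x(y)\leq c_{s,t}$ and $q_x(y)\geq 1 - 2S(a) = c_{s,t} - 4S(a)$; combined with the nonnegativity of $q_x$ this reads $q_x(y)\geq (c_{s,t} - 4S(a))_+$.

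The final step is to show $S(a)\leq 1/a$, which turns the lower bound into $q_x(y)\geq (c_{s,t} - 4/\sigma_{s,t})_+$ as stated in the lemma. This follows from the elementary integral comparison
\[S(a) = \sum_{k\geq 1}\e^{-\pi a^2k^2/4}\leq \int_0^\infty\e^{-\pi a^2t^2/4}\,\d t = \frac{1}{a},\]
valid since the integrand is decreasing on $[0,\infty)$. Integrating $q_x$ against Lebesgue measure on a Borel set $A\subset[0,1]$ then yields the two-sided bound, and $c_{s,t} = 1 + 2S(a)\geq 1$ is immediate. The only genuine (and rather minor) obstacle is the Poisson summation bookkeeping; once the Fourier expansion is in hand, the rest is routine. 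The degenerate case $\sigma_{s,t}=0$, in which $X^x_{s,t}=x$ almost surely, is covered by the convention $1/0=\infty$, which trivializes the lower bound.
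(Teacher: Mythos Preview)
Your proof is correct and takes a genuinely different route from the paper. The paper works directly with the lattice sum $q_x(y)=\sum_{n\in\Z}[\phi_x(y+2n)+\phi_x(2n-y)]$: it bounds $\mathbb{P}(X^x_{s,t}\in A)$ above and below by $\lambda(A)$ times sums of $\sup$'s and $\inf$'s of the Gaussian over unit intervals, then estimates the gap between those two sums term by term to get $c_{s,t}-d_{s,t}\leq 4\phi^0_{s,t}(0)=4/\sigma_{s,t}$. You instead recognize $q_x$ as the Neumann heat kernel on $[0,1]$, apply Poisson summation to obtain the cosine expansion $q_x(y)=1+2\sum_{k\geq1}\e^{-\pi a^2k^2/4}\cos(\pi kx)\cos(\pi ky)$, and read off both bounds from $|\cos\cdot\cos|\leq1$ together with the integral comparison $S(a)\leq1/a$. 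Your $c_{s,t}=1+2S(a)$ is not the same constant as the paper's, but the lemma only asserts existence, so either choice works. The paper's argument is more elementary in that it avoids Fourier analysis; yours is cleaner, makes the constant explicit, and explains why the normalization $\sigma_{s,t}^2=2\pi\int_s^t\sigma_u^2\,\d u$ is natural.
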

\begin{proof}
We define
$$\phi^{x}_{s,t}(y)=\frac{1}{\sqrt{2\pi \sigma_{s,t}}} \exp\left(-\frac{(y-x)^2}{2\sigma_{s,t}}\right)$$
the density of  $W_{s,t}^x$ .
Using that
\begin{equation*}
\mathbb{P}_x(X_{s,t} \in A)=\sum_{n\in \Z} \mathbb P(W_t^x\in (A+2n)\cap [2n,2n+1])+\mathbb P(W_t^x\in (2n-A)\cap [2n-1,2n]),
\end{equation*}
we obtain
\begin{equation}
\label{eq:MB-cd}
d_{s,t}^x(A) \lambda(A) \leq  \mathbb{P}_x(X_{s,t} \in A) \leq c_{s,t}^x(A) \lambda(A), 
\end{equation}
with
\Bea
c^x_{s,t}(A)&=& \sum_{n\in \Z} \left(\sup_{ A +2n} \phi^x_{s,t}+\sup_{ 2n-A} \phi^x_{s,t}\right),\\
d^x_{s,t}(A)&=& \sum_{n\in \Z} \left(\inf_{ A +2n} \phi^x_{s,t}+\inf_{ 2n-A} \phi^x_{s,t}\right).
\Eea
Using  \eqref{eq:MB-cd}, these constants verify
$$c_{s,t}:= \sup_{x\in [0,1]} c_{s,t}^x([0,1]) \geq c^x_{s,t}(A)\geq 1 \geq d^x_{s,t}(A) \geq \inf_{x\in [0,1]} d^x_{s,t}([0,1])=:d_{s,t}. $$
It remains to prove an upperbound of  the difference $c_{s,t}-d_{s,t}$ and conclude. Decomposing the sum following $n=0, n\geq 1$ and $n\leq -1$ shows on a first hand that
$$c_{s,t}=\sup_{x\in [0,1]} \sum_{n\in \Z} \sup_{[n,n+1]}\phi^x_{s,t}=
\sup_{x\in [0,1]}\left( \phi_{s,t}^0(0)+\sum_{n\in \Z} \phi^0_{s,t}(n-x) \right),
$$
and on the other hand
\begin{align*} 
d_{s,t}&=  \inf_{x\in [0,1]}\sum_{n\in \Z} \inf_{[n,n+1]}\phi^x_{s,t} 
=
\inf_{x\in [0,1]}\left(\phi^0_{s,t}(\max(x,1-x)) + \sum_{n\in \Z \setminus\{0,1\}}  \phi^0_{s,t}(n-x)\right).
\end{align*}
Combining 
\begin{align*}
c_{s,t}
&\leq \phi^0_{s,t}(0) + \sum_{n\in \Z}  \sup_{x \in [0,1]} \phi^0_{s,t}(n-x)
= 2 \phi^0_{s,t}(0) + \sum_{n \in \mathbb{Z}} \phi^0_{s,t}(n),
\end{align*}
and
\begin{align*}
d_{s,t} 
&\geq \inf_{x \in [0,1]} \phi^0_{s,t}(\max(x,1-x)) + \sum_{n\in \Z \setminus \{0,1\} } \inf_{x \in [0,1]} \phi^0_{s,t}(n-x) \\
&= \phi^0_{s,t}(1)  + \sum_{ n \in \mathbb{Z} \setminus \{-1,0,1 \} } \phi^0_{s,t}(n) =  \sum_{ n \in \mathbb{Z} \setminus \{-1,0 \} } \phi^0_{s,t}(n),
\end{align*}
we obtain $c_{s,t} -d_{s,t} \leq 4 \phi^0_{s,t}(0).$ This ends the proof.
\end{proof}

We give now the coupling constants and introduce
$$\overline{\sigma}_{s,t}=\left(1-\frac{4}{\sigma_{s,t }}\right) \mathbf{1}_{\sigma_{s,t }>4}.$$

\begin{lem}[Coupling constants]
\label{lem:reflected-mb}
Let $s= t_0 \leq \cdots \leq t_N\leq t_{N+1}\leq t $. \\
(i) Assumptions \eqref{as:A1I} and \eqref{as:A2I} are satisfied with the constants
 $$
 c_i=\overline{\sigma}_{t_{i-1},t_i}\e^{-(\overline{r}-\underline{r})(t_{i}-t_{i-1})}, \quad i=1,\dots,N+1.
 $$
and
$$d_i=c_{i+1}= \overline{\sigma}_{t_i,t_{i+1}}\e^{-(\overline{r}-\underline{r})(t_{i+1}-t_i)}, \quad i=1,...,N,$$
 and $\nu_i=\lambda$ for $i=1, \ldots, N+1$. \\
(ii) If  $\sigma_{t_{0}, t_{1}}  > 4$, 
$\sigma_{t_{N-1}, t_{N}}  > 4,\, \sigma_{t_{N}, t_{N+1}} > 4,$ 
then \eqref{as:A3I} and \eqref{as:A4I} hold with 
$$
\alpha= \frac{ \e^{(\overline{r}-\underline{r})(t_{N+1}-t_{N-1})}}{\overline{\sigma}_{t_{N-1},t_N}\overline{\sigma}_{t_{N},t_{N+1}}}, \qquad \beta =  \frac{\e^{(\overline{r}-\underline{r})(t_{1}-t_0)}}{ \overline{\sigma}_{t_0,t_{1}}},$$
and $\nu=\lambda$.
\end{lem}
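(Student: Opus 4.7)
The plan is to derive all four inequalities from the Feynman--Kac representation \eqref{eq:FK}, which yields the two-sided bound $e^{\underline{r}(t-s)} \leq m_{s,t}(x) \leq e^{\overline{r}(t-s)}$ uniformly in $x$, combined with the density bounds of Lemma \ref{lem:diffbound}. For \eqref{as:A1I}, I would keep only the lower growth: for any positive measurable $f$,
\[
M_{t_{i-1},t_i}f(x) \;\geq\; e^{\underline{r}(t_i - t_{i-1})}\, \mathbb{E}\bigl[f(X^x_{t_{i-1},t_i})\bigr] \;\geq\; e^{\underline{r}(t_i - t_{i-1})}\, \overline{\sigma}_{t_{i-1},t_i}\, \lambda(f),
\]
where the second step uses Lemma \ref{lem:diffbound} together with the elementary fact $(c_{s,t} - 4/\sigma_{s,t})_+ \geq \overline{\sigma}_{s,t}$ (immediate from $c_{s,t} \geq 1$). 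Dividing by the upper bound $m_{t_{i-1},t_i}(x) \leq e^{\overline{r}(t_i - t_{i-1})}$ produces exactly the claimed $c_i$ with $\nu_i = \lambda$.

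The three remaining conditions \eqref{as:A2I}, \eqref{as:A3I}, \eqref{as:A4I} all take the form $m_{s',\tau}(x) \leq K\,\lambda(m_{s',\tau})$ with $s'\in\{t_i,\, t_N,\, s\}$, and I would establish them via a single scheme. For each such $s'$, insert an auxiliary time $t'\in\{t_{i+1},\, t_{N+1},\, t_1\}$, use the semigroup identity $m_{s',\tau}(x) = \mathbb{E}_x\bigl[e^{\int_{s'}^{t'} r\, du}\, m_{t',\tau}(X^x_{s',t'})\bigr]$, and apply Lemma \ref{lem:diffbound} to both sides to get
\begin{align*}
m_{s',\tau}(x) &\leq e^{\overline{r}(t'-s')}\, c_{s',t'}\, \lambda(m_{t',\tau}),\\
\lambda(m_{s',\tau}) &\geq e^{\underline{r}(t'-s')}\,\bigl(c_{s',t'} - 4/\sigma_{s',t'}\bigr)_+\, \lambda(m_{t',\tau}),
\end{align*}
where the lower bound follows by Fubini applied to the integration over the initial point. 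The factor $\lambda(m_{t',\tau})$ cancels in the ratio, and the identity
\[
\frac{c_{s',t'}}{(c_{s',t'} - 4/\sigma_{s',t'})_+} \;=\; \frac{1}{1 - 4/(c_{s',t'}\sigma_{s',t'})} \;\leq\; \frac{1}{\overline{\sigma}_{s',t'}}
\]
(once more using $c_{s',t'}\geq 1$) leaves the uniform estimate $m_{s',\tau}(x)/\lambda(m_{s',\tau}) \leq e^{(\overline{r}-\underline{r})(t'-s')}/\overline{\sigma}_{s',t'}$.

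Substituting the three choices of $(s',t')$ recovers $1/d_i$ for \eqref{as:A2I} and $\beta$ for \eqref{as:A4I} directly. For \eqref{as:A3I}, taking $(s',t') = (t_N, t_{N+1})$ yields the bound $e^{(\overline{r}-\underline{r})(t_{N+1}-t_N)}/\overline{\sigma}_{t_N,t_{N+1}}$, and a brief algebraic check confirms
\[
\alpha c_N \;=\; \frac{e^{(\overline{r}-\underline{r})(t_{N+1}-t_{N-1})}}{\overline{\sigma}_{t_{N-1},t_N}\overline{\sigma}_{t_N,t_{N+1}}}\cdot\overline{\sigma}_{t_{N-1},t_N}\, e^{-(\overline{r}-\underline{r})(t_N - t_{N-1})} \;=\; \frac{e^{(\overline{r}-\underline{r})(t_{N+1}-t_N)}}{\overline{\sigma}_{t_N,t_{N+1}}},
\]
so the two bounds coincide. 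The hypotheses $\sigma_{t_0,t_1} > 4$, $\sigma_{t_{N-1},t_N} > 4$, $\sigma_{t_N,t_{N+1}} > 4$ are exactly what is needed to make the relevant $\overline{\sigma}$'s positive, so all bounds are finite. The only real subtlety is recognising that the auxiliary point $t_{N+1}$ is there precisely to supply the intermediate time for \eqref{as:A3I}, and that $\alpha$ is engineered so that $\alpha c_N$ collapses to the one-interval ratio coming from $[t_N, t_{N+1}]$; the remainder is routine bookkeeping.
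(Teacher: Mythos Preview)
Your proof is correct and follows essentially the same route as the paper: both combine the Feynman--Kac bounds $e^{\underline r(t-s)}\leq m_{s,t}\leq e^{\overline r(t-s)}$ with the density estimate of Lemma~\ref{lem:diffbound}, deriving \eqref{as:A1I} by lower-bounding the law of $X^x$ and upper-bounding the mass, and handling \eqref{as:A2I}--\eqref{as:A4I} by inserting an auxiliary intermediate time and taking the ratio of the resulting upper and lower bounds on $m_{s',\tau}$. Your packaging of the three mass-comparison assumptions into a single scheme with three instantiations of $(s',t')$ is a slightly cleaner presentation of the same argument the paper gives via its inequality~\eqref{eq:tprime}.
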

\begin{proof}
Lemma \ref{lem:diffbound} and Equation \eqref{eq:FK} ensure that for any probability measure $\mu$, \ $u > v \geq 0$
$$
 d_{u,v} \e^{\underline{r}(v-u)} \lambda \leq \mu M_{u,v} \leq c_{u,v} \e^{\overline{r}(v-u)} \lambda,
$$
where $d_{u,v}:=\left( c_{u,v}-4/\sigma_{u,v}\right)_+$. Then
\begin{equation} 
\label{eq:brownien-encadrement12}
 \overline{\sigma}_{u,v}c_{u,v} \e^{\underline{r}(v-u)} \lambda \leq \mu M_{u,v} \leq c_{u,v} \e^{\overline{r}(v-u)} \lambda,
\end{equation}
recalling that $c_{u,v}\geq 1$.
In particular
\begin{equation}
\label{eq:encad-m}
 \overline{\sigma}_{u,v} c_{u,v} \e^{\underline{r}(v-u)}  \leq \mu(m_{u,v}) \leq c_{u,v} \e^{\overline{r}(v-u)}.
\end{equation}
Combining the two last bounds, we find 
$$
\mu M_{u,v} \geq   \overline{\sigma}_{u,v} c_{u,v} \e^{\underline{r}(v-u)} \lambda \geq  \overline{\sigma}_{u,v} \e^{-(\overline{r}-\underline{r})(v-u)} \, \mu(m_{u,v}) \lambda 
$$
and obtain the expected value of $c_i$, with $u=t_{i-1}, v=t_i$. \\
Moreover for all $w\in (u,v]$ such that $d_{u,w}>0$,  we also have by integration of  \eqref{eq:brownien-encadrement12}
$$
 \overline{\sigma}_{u,w}c_{u,w} \e^{\underline{r}(w-u)} \lambda (m_{w,v}) \leq \mu(m_{u,w}) \leq   c_{u,w} \e^{\overline{r}(w-u)} \lambda (m_{w,v}).
$$
 Taking respectively  $\mu=\delta_x$ and $\mu= \lambda$ for the second (resp. first) inequality,
\begin{equation}
\label{eq:tprime}
 \overline{\sigma}_{u,w}  \e^{-(\overline{r}-\underline{r})(w-u)} m_{u,v}(x)\leq  \lambda (m_{u,v})
\end{equation}
and we get the value of $d_i$, with $u=t_i, w=t_{i+1}, v=\tau$. 

Let us turn to the proof of $(ii)$. Using again \eqref{eq:tprime} with now $u=t_N, w=t_{N+1}, v=\tau$ yields
$$ m_{t_N,T}(x)\leq \frac{1}{\overline{\sigma}_{t_N,t_{N+1}}} \e^{-(\overline{r}-\underline{r})(t_{N+1}-t_N)}  \lambda (m_{t_N,\tau}). $$
Recalling the expression of $c_N$ from $(i)$ provides the expected value of $\alpha$. 
Finally, using  \eqref{eq:tprime} with $u=s, w=t_{1}, v=t_N$ yields  $\beta$.
\end{proof}

Remark that the previous proof could be achieved, without probabilistic notation. Indeed, bounding $r$, one can build sub and super solutions, which, up to renormalization, satisfy Equation \eqref{eq:heat} with $r=0$. It is then enough to use Lemma \ref{lem:diffbound} which is only based on the explicit solutions of \eqref{eq:heat} with constant $r$.
\begin{proof}[Proof of Theorem \ref{th:reflected-mb} ] Consider a sequence 
 $(t_i)_{i=0}^{N+1}$ such that  $N\geq  1$, $s=t_0 \leq \cdots \leq t_{N+1}\leq t$ and
$$  t_0- t_{1}\leq \rho, \  t_{N}- t_{N-1}\leq \tau, \  t_{N+1}- t_{N}\leq \tau \ \  \text{ and } \ \  \text{for } i\in \{0,N-1,N\},    \ \sigma_{t_{i+1},t_{i}} \geq 5.$$
By Lemma \ref{lem:reflected-mb}, the following constants
$$c_i=\e^{-g(t_{i-1},t_i)},   \qquad d_i=\e^{-g(t_{i},t_{i+1})}$$
are $(\alpha,\beta,\lambda)$ admissible coupling constants for $M$ on $[s,t]$, with $\alpha=\gamma_{\tau}^2$ and  $\beta=\gamma_{\rho}.$
Then, optimizing over  admissible coupling constants yields
 $$  C_{\alpha,\beta,\lambda}(s,t)\geq \mathfrak C_{\tau,\rho}(s,t),
$$
and applying  Theorem \ref{th:main}  ends the proof.
\end{proof}

\subsection{Homogeneous semigroups and the renewal equation}
\label{sect:homogene}
First, we specify the general result in the homogeneous setting, which simplifies the assumptions. Second, we develop an application to the renewal equation.

\subsubsection{Existence of eigenelements and speed of convergence}
In this subsection we consider a semigroup $(M_{s,t})_{0\leq s\leq t}$ which is homogeneous,
meaning that the sets $\X_t=\X$ are  time-independent and that there exists a semigroup $(M_t)_{t\geq 0}$ set on $\X$ such that $M_{s,t}=M_{t-s}$ for all $t\geq s \geq 0$.
In this case Assumptions \eqref{as:A1I}, \eqref{as:A2I}, \eqref{as:A3I} and \eqref{as:A4I} can be simplified as follows.

First,   there exist  constants $c>0$, $r>0$ and a probability measure $\nu$ such that for any $x\in \X$,
\begin{equation}\label{as:H1-h}\tag{H1}
\delta_x M_{r}\geq c\, m_{r}(x)\, \nu.
\end{equation}
Second, there exists a constant $d>0$ such that for any $t\geq 0$,
\begin{equation}\label{as:H2-h}\tag{H2}
\nu( m_{t}) \geq d\, \| m_{t}\|_{\infty},
\end{equation}
where we recall the notation $m_t=M_t\1.$ These assumptions have been obtained in \cite{CV16}  for the study of process conditionned on non-absorption.  
Let us note that \cite{CV16} also prove that they are necessary conditions for uniform exponential convergence in total variation distance.
Additionally the main result can be strengthened, provided an additional assumption:
\begin{equation}\label{as:H3-h}\tag{H3}
\text{The function}\ t\mapsto\|m_t\|_\infty\ \text{is locally bounded on}\ \R_+.
\end{equation}
Notice that this last assumption is satisfied by  classical semigroups  appearing in applications.

\begin{theo}
\label{th:homogene}
Under Hypotheses~\eqref{as:H1-h},~\eqref{as:H2-h} and~\eqref{as:H3-h}, there exists a unique triplet $(\gamma, h, \lambda) \in \P(\X)\times \B_b(\X)\times\R$ such that $\gamma(h)=1$ and for all $t\geq0$,
\begin{equation}
\label{eq:vp-hom}
\gamma M_t = \e^{\lambda t} \gamma\qquad\text{and}\qquad \ M_t h = \e^{\lambda t} h.
\end{equation}
Additionally $h$ is bounded and positive on $\X$ and there exists $C>0$ such that for all $t\geq0$ and  $\mu \in \mathcal{M}(\X)$,
$$
\left\Vert \e^{-\lambda t}\mu M_t - \mu (h) \gamma \right\Vert_{\mathrm{TV}} \leq C\, \|\mu\|_{\mathrm{TV}} \left(1 -cd \right)^{t/r}.
$$
\end{theo}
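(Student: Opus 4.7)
The plan is to reduce to Theorem~\ref{th:main} by translating \eqref{as:H1-h}--\eqref{as:H3-h} into admissible coupling parameters. Fix $s\geq 0$ and choose the time subdivision $t_i = s + ir$ together with $\nu_i = \nu$ for $i=0,\dots,N$. Then \eqref{as:H1-h} is exactly \eqref{as:A1I} with $c_i = c$; homogeneity of \eqref{as:H2-h} provides \eqref{as:A2I} with $d_i = d$; and the same inequality evaluated at $t_N$ and at $s$ yields \eqref{as:A3I}, \eqref{as:A4I} with $\alpha = 1/(cd)$ and $\beta = 1/d$ (both $\geq 1$ since $c,d\leq 1$). Hence $C_{\alpha,\beta,\nu}(s,t) \geq -\lfloor(t-s)/r\rfloor \log(1-cd) \to \infty$, so Theorem~\ref{th:main} applies and delivers a bounded positive $h_s$ with $\nu(h_s)=1$ and a quantitative rate of order $(1-cd)^{t/r}$. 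Formula~\eqref{hsdef} combined with time-homogeneity reads $h_s(x) = \lim_\tau m_{\tau-s}(x)/\nu(m_{\tau-s})$, which does not depend on $s$; write $h^\star$ for this common function.

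I would next construct $\gamma$ and isolate the eigenvalue. Setting $\gamma_\tau := \nu M_\tau/\nu(m_\tau)$, Lemma~\ref{denis}\,(iii) combined with the semigroup property yields $\|\gamma_\tau - \gamma_{\tau+u}\|_{\mathrm{TV}} \leq 2(1-cd)^{\lfloor \tau/r \rfloor}$ for any $u\geq 0$, so $(\gamma_\tau)$ is Cauchy in total variation and converges to some $\gamma\in\P(\X)$. Because \eqref{as:H3-h} makes $m_t$ bounded, I can define $\Lambda(t) := \lim_\tau \gamma_\tau(m_t) = \gamma(m_t) = \lim_\tau \nu(m_{\tau+t})/\nu(m_\tau)$. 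Writing $\gamma_\tau M_t = (\nu(m_{\tau+t})/\nu(m_\tau))\,\gamma_{\tau+t}$ and letting $\tau\to\infty$ gives $\gamma M_t = \Lambda(t)\gamma$, while a dominated convergence argument applied to $m_{\tau+t}(x)/\nu(m_\tau) = (m_{\tau+t}(x)/\nu(m_{\tau+t}))\cdot(\nu(m_{\tau+t})/\nu(m_\tau))$ yields $M_t h^\star = \Lambda(t) h^\star$. The semigroup property forces $\Lambda(t+s) = \Lambda(t)\Lambda(s)$; since $\Lambda(t)\in(0,\|m_t\|_\infty]$ is locally bounded by \eqref{as:H3-h}, the standard resolution of the multiplicative Cauchy equation gives $\Lambda(t) = \e^{\lambda t}$ for some $\lambda\in\R$.

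Setting $h := h^\star/\gamma(h^\star)$ (a valid rescaling since $h^\star>0$ and $\gamma\in\P(\X)$) produces a triple $(\gamma,h,\lambda)$ satisfying \eqref{eq:vp-hom} and $\gamma(h)=1$. For the quantitative bound I would apply Theorem~\ref{th:main} twice with $s=0$: first to an arbitrary $\mu$, choosing the auxiliary initial probability equal to $\gamma$ (so the asymptotic profile is exactly $\gamma$ at every time), which gives
\[
\|\mu M_t - \mu(h^\star)\nu(m_t)\gamma\|_{\mathrm{TV}} \leq C_1 \|\mu\|_{\mathrm{TV}}\,\nu(m_t)\,(1-cd)^{t/r};
\]
then to $\mu = \gamma$, which gives $|\gamma(h^\star)\nu(m_t) - \e^{\lambda t}| \leq C_2\,\e^{\lambda t}(1-cd)^{t/r}$ and, in particular, $\nu(m_t)\leq C_3\e^{\lambda t}$. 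Combining these via the triangle inequality and substituting $\mu(h^\star)=\gamma(h^\star)\mu(h)$ produces the stated bound on $\|\e^{-\lambda t}\mu M_t - \mu(h)\gamma\|_{\mathrm{TV}}$. Uniqueness then follows by testing this convergence against any rival triple $(\gamma', h', \lambda')$: taking $\mu=\gamma'$ forces $\e^{(\lambda'-\lambda)t}\gamma' \to \gamma'(h)\gamma$, whence $\lambda=\lambda'$ and $\gamma=\gamma'$, and evaluating on $\mu=\delta_x$ with test function $h'$ yields $h'=h$.

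The subtle step is identifying the eigenvalue: one must show both that the multiplicative factors $\nu(m_{\tau+t})/\nu(m_\tau)$ actually converge and that their limit satisfies a genuine exponential form. This is precisely where \eqref{as:H3-h} intervenes, supplying the local boundedness needed to resolve the multiplicative Cauchy equation; everything else is essentially bookkeeping around the quantitative estimate of Theorem~\ref{th:main}.
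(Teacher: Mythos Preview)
Your proof is correct and follows essentially the same route as the paper's: the same translation of \eqref{as:H1-h}--\eqref{as:H2-h} into admissible coupling constants $(c,d,\alpha,\beta)=(c,d,1/(cd),1/d)$, the same construction of $h^\star=\lim m_t/\nu(m_t)$ and of $\gamma=\lim \nu M_t/\nu(m_t)$ via a Cauchy argument, and the same identification of the eigenvalue through the multiplicative Cauchy equation with local boundedness from \eqref{as:H3-h}. The only cosmetic difference is ordering: the paper first shows $M_t h_0=(\nu M_t h_0)h_0$ and then builds $\gamma$, whereas you build $\gamma$ first and read off $\Lambda(t)=\gamma(m_t)$; the two quantities coincide since $\nu M_t h^\star=\lim_s\nu(m_{t+s})/\nu(m_s)=\gamma(m_t)$.
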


Theorem~\ref{th:homogene} strictly extends the main result of~\cite{CV14}.
For instance their theorems do not apply for the semigroup of Section~\ref{sect:hom-renewal} below.
This semigroup cannot be written as a the semigroup of a killed Markov process, even up to exponential normalisation.

\begin{Rq}
By differentiating \eqref{eq:vp-hom}, the triplet $(\gamma,h,\lambda)$ is a triplet of eigenelements for the infinitesimal generator of $(M_t)_{t\geq0},$
that is $\gamma\mathcal A=\lambda\gamma$ and $\mathcal A h=\lambda h$ where the (unbounded) operator $\mathcal A$ is defined by
$\mathcal A=\lim_{t\to0}\frac1t(M_t-I).$ \\
\end{Rq}

\begin{Rq}
With Lemma~\ref{denis} $(iii)$, we also recover (as expected) the statement from \cite{CV16} :
$$
\left\Vert \frac{\mu M_t}{\mu(m_t)} - \gamma \right\Vert_{\mathrm{TV}} \leq 2 \left(1 -cd \right)^{t/r}.
$$
\end{Rq}

\begin{proof}
Assume ~\eqref{as:H1-h} and~\eqref{as:H2-h} and  consider $(t_i)_{0\leq i \leq N}$ defined by $t_i=ir$.
Then  Assumptions \eqref{as:A1I} - \eqref{as:A4I} hold with, for every $0\leq i \leq N$,
\[c_i=c, \quad d_i=d, \quad \nu_i=\nu, \quad \alpha =\frac{1}{cd}, \quad \beta = \frac{1}{d}.\]
This implies
$$C_{\alpha,\beta,\nu}(0,t)\geq \lfloor (t-r)/r\rfloor \log(1-cd).$$
By  Theorem~\ref{th:main} applied to $M$ on $[0,t]$,  there exist $C>0$ and $h_0 : \X\rightarrow [0,\infty) $  such that for any  $\mu,\widetilde\mu \in \mathcal M(\X)$ 
\be
\label{casgen}
\biggl\| \mu  M_{t}  -  \mu(h_0) \nu(m_{t}) \frac{\widetilde\mu M_t}{\widetilde\mu(m_t)}  \biggr\|_{\mathrm{TV}} \leq C\, \nu(m_t)\|\mu\|_{\mathrm{TV}} \left(1 -cd \right)^{t/r},
\ee
Moreover, taking $\mu=\delta_x,$
$$h_0=\lim_{T\rightarrow \infty} \frac{m_T}{\nu(m_T)}$$
for the supremum norm and $h_0$ is positive, bounded by $1/d,$ and $\nu(h_0)=1.$
By the semigroup property we have $m_{t+s}(x)= \delta_x M_t m_s$ from which we deduce that
\[\frac{m_{t+s}(x)}{\nu(m_{t+s})}\frac{\nu M_t m_s}{\nu(m_s)}= \delta_x M_t\Big(\frac{m_s}{\nu(m_s)}\Big).\]
Letting $s\to \infty$,  we get 
$$(\nu M_t h_0) h_0 = M_t  h_0.$$
This means that $h_0$ is an eigenvector of $M_t$ associated to the eigenvalue $\nu M_t  h_0.$
Since the semigroup property yields
\[\nu M_{t+s}  h_0=\nu M_{t} M_sh_0=  (\nu M_t  h_0) .( \nu M_s  h_0)\]
and $t \mapsto \nu M_t  h_0$ is locally bounded (by assumption~\eqref{as:H3-h}, because $0\leq\nu M_th_0\leq\|m_t\|_\infty/d$),
we deduce the existence of $\lambda\in \mathbb{R}$ such that $$\nu M_t h_0 = \e^{\lambda t}\nu ( h_0)= \e^{\lambda t}.$$

Let us now show the existence of a left eigenvector $\gamma.$
Applying~\eqref{casgen} to $\mu=\nu$ and $\widetilde\mu=\nu M_s$ we get that
\[\biggl\| \frac{\nu  M_{t}}{\nu(m_t)}  -  \frac{\nu M_{t+s}}{\nu(m_{t+s})}  \biggr\|_{\mathrm{TV}}\leq C \left(1 -cd \right)^{t/r}.\]
This ensures that the family $\big(\frac{\nu  M_{t}}{\nu(m_t)}\big)_{t\geq0}$ satisfies the Cauchy property
and we deduce the existence of a probability measure $\gamma$ such that
\[\biggl\| \frac{\nu  M_{t}}{\nu(m_t)}  -  \gamma \biggr\|_{\mathrm{TV}}\leq C \left(1 -cd \right)^{t/r}.\]
Then we use the semigroup property to write that for all $s,t\geq0$ we have
\[\frac{\nu M_s}{\nu (m_s)}M_t=\nu M_t\bigg(\frac{m_s}{\nu(m_s)}\bigg) \frac{\nu M_{s+t}}{\nu (m_{s+t})}\]
and letting $s$ tend to infinity we find
$$\gamma M_t=(\nu M_t  h_0)\,\gamma=\e^{\lambda t}\gamma.$$

Now we set $h=h_0/\gamma(h_0),$ so that $\gamma(h)=1.$
Applying~\eqref{casgen} to $\mu=\widetilde\mu=\gamma$ and dividing by $\nu(m_t)$ yields
\[\bigg|\frac{\e^{\lambda t}}{\nu(m_t)}-\gamma(h_0)\bigg|\leq C(1-cd)^{t/r}.\]
Finally,  using~\eqref{casgen} with $\widetilde\mu=\gamma,$ we write for $\mu\in\M(\X)$ and $t\geq0$
\begin{align*}
\|\e^{-\lambda t}\mu M_t-\mu(h)\gamma\|_{\mathrm{TV}}
&\leq\nu(m_t)\e^{-\lambda t}\bigg(\bigg\|\frac{\mu M_t}{\nu(m_t)}-\mu(h_0)\gamma\bigg\|_{\mathrm{TV}}
+|\mu(h)|\bigg|\gamma(h_0)-\frac{e^{\lambda t}}{\nu(m_t)}\bigg|\bigg)\\
&\leq C'\nu(m_t)\e^{-\lambda t}\|\mu\|_{\mathrm{TV}}(1-cd)^{t/r}.
\end{align*}
The conclusion follows from the fact that the function $t\mapsto\nu(m_t)\e^{-\lambda t}$ is bounded.
Indeed it is locally bounded due to~\eqref{as:H3-h}, and it converges to $1/\gamma(h_0)$ when $t\to+\infty.$
\end{proof}

\subsubsection{Example: the renewal equation}
\label{sect:hom-renewal}
We consider an age-structured population of proliferating cells which divide at age $a\geq0$ according to a division rate $b(a),$
giving birth to two daughter cells with age zero.
The evolution of the age distribution density $u_t$ is given by the so-called renewal PDE
\begin{equation}\label{eq:renewal}\left\{\begin{array}{ll}
\partial_t u_t(a)+\partial_a u_t(a)+b(a)u_t(a)=0, &\qquad t,a>0,
\vspace{2mm}\\
\dis u_t(0)=2\int_0^\infty b(a)u_t(a)\,da, & \qquad t>0.
\end{array}\right.\end{equation}
This model has been introduced by Sharpe and Lotka~\cite{SharpeLotka}
in a more general context, namely with a ``birth rate'' not necessarily equal to twice the ``death rate''.
Since then, it has become a very popular model in population dynamics (see for instance~\cite{Arino,Iannelli,MetzDiekmann,Pert07,Thieme,Webb}).

The state space here is $\X=\R_+=[0,\infty).$
Following~\cite{G17} we associate to Equation~\eqref{eq:renewal} the homogeneous semigroup $(M_t)_{t\geq0}$ defined as follows.
For any $f\in \B_b(\R_+)$, we define the family $(M_tf)_{t\geq0}\subset \B_b(\R_+)$ as the unique solution to the  equation
\begin{equation}\label{eq:renewal:Duhamel}
M_tf(a)=f(a+t)\e^{-\int_0^tb(a+\tau)d\tau}+2\int_0^t\e^{-\int_0^\tau b(a+\tau')d\tau'}b(a+\tau)M_{t-\tau}f(0)\,d\tau.
\end{equation}
The proof of the existence and uniqueness of a solution to \eqref{eq:renewal:Duhamel} is postponed in Appendix~\ref{appendix:renewal}, Lemma~\ref{prop:renewal:Duhamel}.
We also refer to Appendix~\ref{appendix:renewal} for the rigorous definition of $\mu M_t,$ which provides the unique measure solution to Equation~\eqref{eq:renewal} with initial distribution $\mu.$
In particular if $\mu$ has a density $u_0$ with respect to the Lebesgue measure,
we get that $u_t=\mu M_t$ is the unique $L^1$ solution to Equation~\eqref{eq:renewal} with initial distribution $u_0.$
Appendix~\ref{appendix:renewal} also contains a verification of Assumption~\ref{as:Mst} for the semigroup $(M_t)_{t\geq0}.$

Now we can use Theorem~\ref{th:homogene} to obtain the long time asymptotic behavior of the solutions to Equation~\eqref{eq:renewal}.
\begin{theo}
\label{th:renewal:convergence}
Assume that $b$ is a non-negative locally bounded function on $\R_+,$
and suppose the existence of $a_0>0$, $p>0$, $l\in(p/2,p],$ and $\underline b>0$ for which
\[\forall k\in\N,\forall a\in[a_0+kp,a_0+kp+l],\qquad b(a)\geq\underline b.\]
Then there exists a unique triplet of eigenelements $(\gamma, h, \lambda) \in \P(\R_+)\times \B_b(\R_+)\times \R_+$ verifying $\gamma(h)=1$ and
\[\forall t\geq 0, \quad \gamma M_t = \e^{\lambda t} \gamma, \ M_t h = \e^{\lambda t}h.\]
Moreover there exist $C>0$ and an explicit $\rho>0,$ given by~\eqref{defdegeu}, such that for all $\mu \in \mathcal{M}(\R_+)$ and all $t\geq0$
\[\left\Vert \e^{-\lambda t}\mu M_t - \mu (h) \gamma \right\Vert_{\mathrm{TV}} \leq C \|\mu\|_{\mathrm{TV}}\, \e^{-\rho t}.\]
\end{theo}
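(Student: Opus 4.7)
The plan is to invoke Theorem~\ref{th:homogene} by verifying its three hypotheses \eqref{as:H1-h}, \eqref{as:H2-h} and \eqref{as:H3-h} for the semigroup $(M_t)_{t\geq 0}$. The eigentriplet $(\gamma,h,\lambda)$ and the exponential rate $\rho = -\log(1-cd)/r$ will then follow at once, the constants $c$, $d$, $r$ being those extracted from the verification, which should yield the explicit formula~\eqref{defdegeu}.

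Condition~\eqref{as:H3-h} is direct: specializing \eqref{eq:renewal:Duhamel} to $f=\1$ gives
\[m_t(a) \leq 1 + 2\int_0^t b(a+\tau)\,m_{t-\tau}(0)\,d\tau,\]
so Gronwall's lemma applied at $a=0$, combined with the local boundedness of $b$, produces $m_t(0) \leq \exp(2\|b\|_{L^\infty([0,t])}t)$; back-substitution yields local boundedness of $t\mapsto\|m_t\|_\infty$.

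The core difficulty is the Doeblin-type condition \eqref{as:H1-h}. I would take $\nu=l^{-1}\1_{[a_0,a_0+l]}\lambda$ (with $\lambda$ the Lebesgue measure) and iterate \eqref{eq:renewal:Duhamel} twice to express $M_rf(a)$ from below as a double integral over division epochs $(\tau_1,\tau_2)$: the founder divides at time $\tau_1$ at age $a+\tau_1$, an age-$0$ daughter divides at time $\tau_1+\tau_2$ at age $\tau_2$, and a great-granddaughter has age $r-\tau_1-\tau_2$ at time $r$. The hypothesis $l>p/2$ is invoked precisely here, since the gaps between consecutive good intervals $[a_0+kp,a_0+kp+l]$ have length $p-l<l$: for any $a\geq 0$ and $r$ chosen of order $2(a_0+p)$, this allows one to identify a rectangle of $(\tau_1,\tau_2)$ of positive Lebesgue measure on which $a+\tau_1$ and $\tau_2$ both lie in good intervals (so that $b(a+\tau_1),b(\tau_2)\geq\underline b$) while $r-\tau_1-\tau_2\in[a_0,a_0+l]$. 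Bounding the survival exponentials $\e^{-\int b}$ from below uniformly in $a$ via local boundedness of $b$ on $[0,a_0+r+p]$ produces $M_rf(a)\geq c_1\nu(f)$ for some $c_1>0$ independent of $a$, and dividing by the uniform upper bound $m_r(a)\leq\|m_r\|_\infty$ from the previous step gives \eqref{as:H1-h}. The main obstacle is the careful bookkeeping of the exponential survival factors to extract clean age-independent constants.

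For \eqref{as:H2-h}, a one-step Duhamel bound yields, for $a\in[a_0,a_0+l]$, a lower bound of the form $m_t(a) \geq c_2\int_0^l m_{t-\tau}(0)\,d\tau$. Applying the Doeblin condition \eqref{as:H1-h} just obtained to an age-$0$ initial datum produces exponential growth of $s\mapsto m_s(0)$, which in turn makes $\int_0^l m_{t-\tau}(0)d\tau$ comparable, up to an $a$-independent factor, to any upper bound on $\|m_t\|_\infty$ derived from \eqref{eq:renewal:Duhamel}; averaging over $a\in[a_0,a_0+l]$ then yields $\nu(m_t)\geq d\|m_t\|_\infty$ with an explicit $d>0$. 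With all three hypotheses established, Theorem~\ref{th:homogene} concludes the proof.
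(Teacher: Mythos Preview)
Your overall strategy is right---verify \eqref{as:H1-h}--\eqref{as:H3-h} and invoke Theorem~\ref{th:homogene}---and \eqref{as:H3-h} is indeed handled by Gr\"onwall. But there is a genuine gap in your treatment of \eqref{as:H1-h}, and your \eqref{as:H2-h} argument is not self-contained.

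For \eqref{as:H1-h}: you propose to bound the first survival exponential $\e^{-\int_0^{\tau_1} b(a+\cdot)}$ from below ``via local boundedness of $b$ on $[0,a_0+r+p]$''. But this integral is $\int_a^{a+\tau_1} b$, and $a$ ranges over all of $\R_+$. Since the theorem only assumes $b$ locally bounded---indeed, one of the stated novelties is precisely that $b$ may be \emph{unbounded} at infinity---this exponential cannot be controlled uniformly in $a$ by your argument. The paper resolves this by never separating $b(a+\tau)$ from its companion survival factor: it uses the identity
\[\int_{t_1}^{t_2} \e^{-\int_0^\tau b(a+\tau')d\tau'} b(a+\tau)\,d\tau = \e^{-\int_a^{a+t_1} b}\Bigl(1-\e^{-\int_{a+t_1}^{a+t_2} b}\Bigr),\]
splits into the cases $a<a_0$ and $a\geq a_0$, and for $a\geq a_0$ takes $t_1=0$ so that the first factor is $1$, while the second is bounded below by $1-\e^{-\underline b(2l-p-\alpha)}$ uniformly in $a$ (this is where $l>p/2$ enters). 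The paper also chooses a different coupling measure, $\nu(f)=\int_0^\alpha M_sf(0)\,ds\big/\int_0^\alpha m_s(0)\,ds$, supported near age zero rather than on $[a_0,a_0+l]$.

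For \eqref{as:H2-h}: your claim that \eqref{as:H1-h} ``produces exponential growth of $s\mapsto m_s(0)$'' is not justified---\eqref{as:H1-h} alone says nothing about growth of the mass, and extracting the comparison $\int_0^l m_{t-\tau}(0)\,d\tau \gtrsim \|m_t\|_\infty$ from it would already presuppose a form of \eqref{as:H2-h}. The paper instead establishes directly that $t\mapsto m_t(0)$ is nondecreasing (Lemma~\ref{lm:renewal:mt}) and that $m_t(a)\leq 2m_t(0)$ for all $a$ (Corollary~\ref{cor:renewal:mt}); with its choice of $\nu$ the inequality $\nu(m_t)\geq d\,\|m_t\|_\infty$ then becomes a one-line consequence of monotonicity.
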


The convergence of the solutions to a time-independent asymptotic profile multiplied by an exponential function of time, sometimes referred to as \emph{asynchronous exponential growth}, was first conjectured for the renewal equation by Sharpe and Lotka~\cite{SharpeLotka} and was then proved by many authors using various methods, see for instance~\cite{Feller,Greiner,GwiazdaPerthame,GwiazdaWiedemann,KhaladiArino,Pert07,Song,Webb84}.
Moreover it is known that the so-called Malthus parameter $\lambda$ is characterized as the unique real number which satisfies the characteristic equation
\[1=2\int_0^\infty b(a)\e^{-\int_0^a(\lambda+b(a'))da'}da,\]
the asymptotic probability measure has an explicit density with respect to the Lebesgue measure
\[\gamma(da)=\kappa\,\e^{-\int_0^a(\lambda+b(a'))da'}da\]
where $\kappa$ is a normalization constant which ensures that $\gamma\in\P(\R_+),$
and the harmonic function $h$ is explicitly given by
\[h(a)=2h(0)\int_a^\infty b(a')\e^{-\int_a^{a'}(\lambda+b(a''))da''}da',\]
where $h(0)$ is chosen so that $\gamma(h)=1.$

The existing results about asynchronous exponential growth for the renewal equation hold for birth and death rates which are not necessarily related by a multiplicative factor, as we assume here.
But in these previous works the birth rate is assumed to be bounded or integrable, a condition which is not required in our situation.
In Equation~\eqref{eq:renewal} if the division rate is unbounded, then the unboundedness of the birth rate $2b$ is ``compensated'' by the unboundedness of the death rate $b.$
Thus our result is new in the sense that the assumptions on the division rate are very general, but also because it provides an explicit spectral gap in terms of the division rate and a convergence which is valid for measure solutions.

The assumptions on the division rate $b$ include some functions which are not bounded neither from above nor from below by a positive constant when $a$ tends to infinity.
The only assumption is that, outside a compact interval, $b$ is larger than a crenel function with period $p$ and a crenel width $l.$
The condition $l>p/2$ is only a technical assumption which simplifies the computations.
It can be removed to the price of a larger number of iterations of the Duhamel formula in the proof.

Before proving Theorem~\ref{th:renewal:convergence}, we define   the probability distribution of age of  division
\[\Phi(a):=b(a)\e^{-\int_0^a b(a')da'}\]
and we give a useful property of $m_t=M_t\1.$

\begin{lem}\label{lm:renewal:mt}
For any $a\geq0$ the function $t\mapsto m_t(a)$ is non-decreasing.
\end{lem}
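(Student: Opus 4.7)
The plan is to deduce monotonicity in $t$ from two ingredients: the semigroup property, together with the pointwise lower bound $m_t \geq \mathbf{1}$ valid for every $t\geq 0$. If this bound holds, the semigroup identity $M_{t+s}=M_sM_t$ gives $m_{t+s}(a)=M_s(m_t)(a)$, and the positivity of $M_s$ (part of Assumption~\ref{as:Mst}, verified in Appendix~\ref{appendix:renewal}) yields
\[m_{t+s}(a)-m_s(a)=M_s(m_t-\mathbf{1})(a)\geq 0,\]
which is exactly the claim. So the real task is to establish $m_t\geq\mathbf{1}$.

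For this, I would first specialize the Duhamel equation \eqref{eq:renewal:Duhamel} with $f=\mathbf{1}$ at $a=0$ to obtain a scalar Volterra renewal equation for $\psi(t):=m_t(0)$:
\[\psi(t)=e^{-B(t)}+2\int_0^t e^{-B(\tau)}b(\tau)\,\psi(t-\tau)\,d\tau,\qquad B(t):=\int_0^t b(\tau)\,d\tau.\]
Writing $\psi=1+g$ and using the elementary identity $\int_0^t e^{-B(\tau)}b(\tau)\,d\tau=1-e^{-B(t)}$, this rearranges into
\[g(t)=\bigl(1-e^{-B(t)}\bigr)+2\int_0^t e^{-B(\tau)}b(\tau)\,g(t-\tau)\,d\tau,\]
a Volterra equation of the second kind with non-negative forcing and non-negative kernel. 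By the same Picard iteration used in Lemma~\ref{prop:renewal:Duhamel}, its unique solution is the convergent Neumann series $g=\sum_{n\geq 0}K^n\varphi$ with $\varphi(t)=1-e^{-B(t)}$ and $K$ the positivity-preserving Volterra operator above; every iterate is non-negative, so $g\geq 0$ and hence $\psi(t)\geq 1$ for all $t$.

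Plugging $m_\cdot(0)\geq 1$ back into \eqref{eq:renewal:Duhamel} at a general age $a$ then gives
\[m_t(a)\geq e^{-\int_0^t b(a+\tau)d\tau}+2\int_0^t e^{-\int_0^\tau b(a+\tau')d\tau'}b(a+\tau)\,d\tau=2-e^{-\int_0^t b(a+\tau)d\tau}\geq 1,\]
closing the pointwise bound on which everything rests. The main obstacle is the Neumann-series positivity step: it requires invoking the uniqueness in Lemma~\ref{prop:renewal:Duhamel} to identify $g$ with a non-negative series, and is really the only place where care is needed. A quicker probabilistic shortcut is also available: interpret $m_t(a)$ as the expected size at time $t$ of the binary branching population started from a single cell of age $a$, so that monotonicity becomes pathwise; but the analytic route above stays strictly within the PDE/Duhamel framework of the paper.
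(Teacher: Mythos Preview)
Your proof is correct and takes a genuinely different route from the paper's. Both arguments begin by establishing $m_t(0)\geq 1$ via the positivity-preserving fixed-point iteration (the paper phrases it as ``$\Gamma$ preserves $\{g\geq 1\}$'', you phrase it as a Neumann series for $g=\psi-1$; these are the same idea). After that, the approaches diverge. The paper works directly with the difference $m_{t+\epsilon}(0)-m_t(0)$, shows it satisfies a Duhamel equation with non-negative forcing, concludes monotonicity at $a=0$, and then extends to general $a$ by a third step. You instead upgrade the scalar bound to the pointwise bound $m_t\geq \mathbf 1$ on all of $\R_+$ and then invoke the semigroup identity $m_{t+s}=M_s m_t$ together with positivity of $M_s$ to get monotonicity in one line. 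Your route is shorter and more conceptual, and it makes transparent that the statement is really just ``positive semigroup $+$ $M_t\mathbf 1\geq\mathbf 1$ $\Rightarrow$ $t\mapsto M_t\mathbf 1$ non-decreasing'', a fact that holds in much greater generality. The paper's approach, while longer, has the minor advantage of never needing the action of $M_s$ on the (a priori merely locally bounded) function $m_t$ --- though this is harmless here since $m_t\in\B_b(\R_+)$ by construction. One small wording point: Lemma~\ref{prop:renewal:Duhamel} uses a local contraction rather than a global Neumann series, but the positivity preservation you need follows from either formulation.
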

\begin{proof}
First we check that $m_t(0)\geq1$ for all $t\geq0.$
By definition $t\mapsto m_t(0)$ is the unique fixed point of
\[\Gamma g(t)=\e^{-\int_0^tb(\tau)d\tau}+2\int_0^t \Phi(\tau)g(t-\tau)\,d\tau\]
and if $g\geq1$ then for all $t\geq0$
\[\Gamma g(t)\geq\e^{-\int_0^tb(\tau)d\tau}+2\int_0^t \Phi(\tau)\,d\tau=2-\e^{-\int_0^tb(\tau)d\tau}\geq1.\]
So the fixed point necessarily satisfies $m_t(0)\geq1,$ since $\Gamma$ is a contraction for small times (see Lemma~\ref{prop:renewal:Duhamel}).

In a second step we prove that $t\mapsto m_t(0)$ is non-decreasing.
Let $\epsilon>0.$
For all $t\geq0$ we have by definition of $m_t(0)$
\begin{align*}
m_{t+\epsilon}(0)-m_t(0)
&=-\int_t^{t+\epsilon}\Phi(\tau)\,d\tau+2\int_t^{t+\epsilon} \Phi(\tau)m_{t+\epsilon-\tau}(0)\,d\tau\\
&\hskip40mm +2\int_0^t\Phi(\tau)(m_{t+\epsilon-\tau}(0)-m_{t-\tau}(0))\,d\tau\\
&=\int_t^{t+\epsilon} \Phi(\tau)(2m_{t+\epsilon-\tau}(0)-1)\,d\tau+2\int_0^t\Phi(\tau)(m_{t+\epsilon-\tau}(0)-m_{t-\tau}(0))\,d\tau.
\end{align*}
So $t\mapsto m_{t+\epsilon}(0)-m_t(0)$ is the unique fixed point of
\[\Gamma g(t)= f_0(t)\e^{-\int_0^tb(\tau)d\tau}+2\int_0^t \Phi(\tau)g(t-\tau)\,d\tau\]
with $f_0(t)=\int_t^{t+\epsilon} \e^{-\int_t^\tau b(\tau')d\tau'}b(\tau)(2m_{t+\epsilon-\tau}(0)-1)\,d\tau\geq0.$
We deduce from the positivity property in Lemma~\ref{prop:renewal:Duhamel} that $m_{t+\epsilon}(0)-m_t(0)\geq0$ for all $t\geq0.$

The last step consists in extending the result of the second step to $t\mapsto m_t(a)$ for any $a\geq0.$
Let $a\geq0$ and $\epsilon>0.$
For all $t\geq0$ we have
\begin{align*}
m_{t+\epsilon}(a)-m_t(a)=&\int_t^{t+\epsilon} \e^{-\int_t^\tau b(a+\tau')d\tau'}b(a+\tau)(2m_{t+\epsilon-\tau}(0)-1)\,d\tau\\
&+2\int_0^t\e^{-\int_t^\tau b(a+\tau')d\tau'}b(a+\tau)(m_{t+\epsilon-\tau}(0)-m_{t-\tau}(0))\,d\tau\geq0.
\end{align*}
\end{proof}

\begin{coro}\label{cor:renewal:mt}
For all $t,a\geq0$ we have $m_t(a)\leq2\,m_t(0).$
\end{coro}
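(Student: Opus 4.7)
The plan is to start from the Duhamel formula \eqref{eq:renewal:Duhamel} applied to $f=\1$, which gives
\[m_t(a)=\e^{-\int_0^tb(a+\tau)d\tau}+2\int_0^t\e^{-\int_0^\tau b(a+\tau')d\tau'}b(a+\tau)\,m_{t-\tau}(0)\,d\tau.\]
The key observation is that the kernel $\tau\mapsto \e^{-\int_0^\tau b(a+\tau')d\tau'}b(a+\tau)$ is (minus) the derivative of $\tau\mapsto \e^{-\int_0^\tau b(a+\tau')d\tau'}$, so integrating it between $0$ and $t$ yields exactly $1-\e^{-\int_0^tb(a+\tau)d\tau}$.

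With this in mind, I would proceed in two short steps. First, apply Lemma~\ref{lm:renewal:mt} to the factor $m_{t-\tau}(0)$ inside the integral: since $t-\tau\leq t$, the monotonicity gives $m_{t-\tau}(0)\leq m_t(0)$. Pulling this out of the integral and evaluating the remaining explicit integral produces
\[m_t(a)\leq \e^{-\int_0^tb(a+\tau)d\tau}+2m_t(0)\Bigl(1-\e^{-\int_0^tb(a+\tau)d\tau}\Bigr)=2m_t(0)+\bigl(1-2m_t(0)\bigr)\e^{-\int_0^tb(a+\tau)d\tau}.\]
Second, recall from the proof of Lemma~\ref{lm:renewal:mt} that $m_t(0)\geq 1$ for every $t\geq 0$, so $1-2m_t(0)\leq 0$ and the exponential term contributes non-positively. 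Dropping it yields $m_t(a)\leq 2m_t(0)$, which is the claimed bound.

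There is no real obstacle here: the corollary is essentially an algebraic consequence of the Duhamel identity combined with the two facts already established in Lemma~\ref{lm:renewal:mt} (monotonicity of $t\mapsto m_t(a)$ and the lower bound $m_t(0)\geq 1$). The only point requiring a bit of care is making sure the Duhamel formula is applied with $f=\1$ (so that the boundary term is indeed $\e^{-\int_0^t b(a+\tau)d\tau}$ and the source term features $m_{t-\tau}(0)$), and that the antiderivative computation is carried out correctly; both are routine.
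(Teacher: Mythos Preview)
Your proof is correct and follows exactly the same approach as the paper: apply the Duhamel formula with $f=\1$, use the monotonicity from Lemma~\ref{lm:renewal:mt} to pull $m_t(0)$ out of the integral, evaluate the explicit integral as $1-\e^{-\int_0^tb(a+\tau)d\tau}$, and conclude using $m_t(0)\geq 1$.
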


\begin{proof}
Starting from the Duhamel formula~\eqref{eq:renewal:Duhamel} and using Lemma~\ref{lm:renewal:mt} we have
\begin{align*}
m_t(a)&=\e^{-\int_0^tb(a+\tau)\,d\tau}+2\int_0^t\e^{-\int_0^sb(a+\tau)\,d\tau}b(a+s)m_{t-s}(0)\,ds\\
&\leq \e^{-\int_0^tb(a+\tau)\,d\tau}+2m_t(0)\int_0^t\e^{-\int_0^sb(a+\tau)\,d\tau}b(a+s)\,ds\\
&= \e^{-\int_0^tb(a+\tau)\,d\tau}+2m_t(0)\biggl[1-\e^{-\int_0^tb(a+\tau)\,d\tau}\biggr]\\
&= 2m_t(0)+\e^{-\int_0^tb(a+\tau)\,d\tau}(1-2m_t(0))\leq2m_t(0).
\end{align*}
\end{proof}

We are now ready to prove Theorem~\ref{th:renewal:convergence}.

\begin{proof}[Proof of Theorem~\ref{th:renewal:convergence}]
We prove that Assumptions~\eqref{as:H1-h},~\eqref{as:H2-h}, and~\eqref{as:H3-h} are satisfied by the renewal semigroup and then apply Theorem \ref{th:homogene}.

We start with~\eqref{as:H1-h}.
For $\alpha>0$, we define the probability measure $\nu$ by
\[\forall f\in C_0(\R_+),\qquad \nu(f):=\frac{\int_0^\alpha M_sf(0)\,ds}{\int_0^\alpha m_s(0)\,ds}.\]
We want to prove that for $\alpha$ small enough (to be determined later), there exists a time $t_0>0$ and $c>0$ such that for all $f\geq0$ and  $a\geq0$,
\be
\label{lamin}
M_{t_0}f(a)\geq c\,\nu(f)m_{t_0}(a).
\ee
Iterating the Duhamel formula~\eqref{eq:renewal:Duhamel} we have for all $f\geq0$ and all $t,a\geq 0$,
\begin{align*}
M_tf(a)&=f(a+t)\e^{-\int_0^t b(a+\tau)\,d\tau}+2\int_0^t\e^{-\int_0^\tau b(a+\tau')\,d\tau'}b(a+\tau)f(t-\tau)\e^{-\int_0^{t-\tau}b(\tau')\,d\tau'}\,d\tau\\
&\hspace{8mm}+4\int_0^t\e^{-\int_0^\tau b(a+\tau')\,d\tau'}b(a+\tau)\int_0^{t-\tau}\Phi(\tau')M_{t-\tau-\tau'}f(0)\,d\tau'd\tau\\
&\geq4\int_0^t\e^{-\int_0^\tau b(a+\tau')\,d\tau'}b(a+\tau)\int_0^{t-\tau}\Phi(t-\tau-s)M_sf(0)\,ds\,d\tau.
\end{align*}
Let $t_0>0,$ $\alpha\in[0,t_0],$ and $0\leq t_1\leq t_2\leq t_0-\alpha.$ We have
\[M_{t_0}f(a)\geq4\int_{t_1}^{t_2}\e^{-\int_0^\tau b(a+\tau')\,d\tau'}b(a+\tau)\int_0^\alpha\Phi(t_0-\tau-s)M_sf(0)\,ds\,d\tau.\]
This inequality means that for bounding $M_{t_0}f$ from below we only keep the individuals which:
do not divide between times $0$ and $t_1;$
divide a first time between $t_1$ and $t_2;$
do not divide between $t_2$ and $t_0-\alpha;$
divide a second time between $t_0-\alpha$ and $t_0.$

Let us check that we can choose $t_0,t_1,t_2$ and $\alpha$ such that $b$ and then $\Phi$ have a lower bound on $[t_0-t_2-\alpha,t_0-t_1]$ and  then give a lower bound for \[\int_{t_1}^{t_2}\e^{-\int_0^\tau b(a+\tau')\,d\tau'}b(a+\tau)\,d\tau=\e^{-\int_a^{a+t_1}b(\tau)\,d\tau}\big(1-\e^{-\int_{a+t_1}^{a+t_2}b(\tau)\,d\tau}\big)\]
and obtain \eqref{lamin}. For that purpose, 
we define $n=\big\lfloor a_0/p\big\rfloor+1$ the smallest integer such that $np> a_0.$
Let $\alpha\in(0,2l-p)$ and $t_0=a_0+np+l.$
The choice of $t_1$ and $t_2$  depends on whether $a<a_0$ or $a\geq a_0.$\\
For $a<a_0$ we choose $t_1=np$ and $t_2=np+l-\alpha.$
We have $b\geq \underline b$ on $[t-t_2-\alpha,t-t_1]=[a_0,a_0+l],$
so that $\Phi\geq \underline b\,\e^{-\int_0^{a_0+l}b(\tau)d\tau}$ on $[t-t_2-\alpha,t-t_1],$
and
\begin{align*}
\int_{t_1}^{t_2}\e^{-\int_0^sb(a+\tau)\,d\tau}b(a+s)\,ds
&\geq \e^{-\int_0^{a_0+np} b(\tau)d\tau}\big(1-\e^{-\underline b(2l-p-\alpha)}\big)>0.
\end{align*}
For $a\geq a_0$ we choose $t_1=0$ and $t_2=l-\alpha.$
We have $\Phi\geq \underline b\,\e^{-\int_0^{A+np+l}b(\tau)d\tau}>0$ on $[t-t_2-\alpha,t-t_1]=[a_0+np,a_0+np+l],$ and
\[\int_0^{t_2}\e^{-\int_0^s b(a+\tau)\,d\tau}b(a+s)\,ds=1-\e^{-\int_a^{a+l-\alpha}b(\tau)\,d\tau}\geq 1-\e^{-\underline b(2l-p-\alpha)}>0.\]
As a consequence, \eqref{lamin} is satisfied with
\[c=4\frac{\int_0^\alpha m_s(0)\,ds}{\|m_{t_0}\|_\infty}\,\underline b\,\big(1-\e^{-\underline b(2l-p-\alpha)}\big)\,\e^{-2\int_0^{a_0+np+l}b(\tau)d\tau},\]
which gives  Assumption~\eqref{as:H1-h}.
\medskip

Now we turn to~\eqref{as:H2-h}.
Since we know from Corollary~\ref{cor:renewal:mt} that $m_t(0)\geq\frac12m_t(a)$ for all $t,a\geq0,$
it suffices to find $d>0$ such that for all $t\geq0$,
\[\nu(m_t)\geq2\,d\,m_t(0).\]
Lemma~\ref{lm:renewal:mt} ensures that for all $t\geq0$,
\[\nu(m_t)=\frac{1}{\int_0^\alpha m_s(0)ds}\int_0^\alpha m_{t+s}(0)ds
\geq\frac{\alpha}{\int_0^\alpha m_s(0)ds} m_t(0)\]
and the constant $d=\frac{\alpha}{2\int_0^\alpha m_s(0)ds}$ suits.

\medskip

It remains to check~\eqref{as:H3-h}.
In that view,  we define
\[\mathfrak b(a):=\sup_{[0,a]}b\]
and we write for $t\geq s\geq0$
\[m_s(0)=\e^{-\int_0^sb(\tau)\,d\tau}+2\int_0^s\e^{-\int_0^{s-\tau}b(\tau')\,d\tau'}b(s-\tau)m_{\tau}(0)\,d\tau\leq 1+2\,\mathfrak b(t)\int_0^sm_{\tau}(0)\,d\tau.\]
Applying the Gr\"onwall's lemma we get $m_s(0)\leq \e^{2 \mathfrak b(t) s}$ for all $s\in[0,t],$ so $m_t(0)\leq \e^{2 \mathfrak b(t) t},$
and using Corollary~\ref{cor:renewal:mt} we obtain 
\[\|m_t\|_\infty\leq2\,\e^{2 \mathfrak b(t)t}.\]

Finally we can apply Theorem~\ref{th:homogene} which ensures the exponential convergence with the rate
$$
\frac{-\log(1-cd)}{t_0}=\frac{-\log\left(1-\frac{2\alpha}{\|m_{t_0}\|_\infty}\underline b\,\big(1-\e^{-\underline b(2l-p-\alpha)}\big)\,\e^{-2\int_0^{a_0+np+l}b(\tau)d\tau}\right)}{a_0+np+l}\\
\geq \rho$$
where 
\be
\label{defdegeu}
\rho= \frac{-\log\left(1-\alpha\underline b\,\big(1-\e^{-\underline b(2l-p-\alpha)}\big)\,\e^{-2\int_0^{2a_0+p+l}b(\tau)d\tau-2(2a_0+p+l) \mathfrak b(2a_0+p+l)}\right)}{2a_0+p+l}
\ee
and the result follows by choosing $\alpha=l- p/2.$
\end{proof}

\subsection{Asymptotically homogeneous semigroups and  increasing maximal age}
\label{sect:quasih}

In this section, we present a general theorem for semigroups which become homogeneous when time tends to infinity. We then apply this theorem to an age structured population where 
the state space has a maximal age which increases with time.

\subsubsection{Convergence of the  profile and evolution of the mass}

We consider the situation of a semigroup which becomes homogeneous when time goes to $\infty$.
For the sake of simplicity and in view of our application, we restrict ourselves to the case when the state space is increasing :$$\forall s< t, \quad \X_s\subset \X_t, \qquad \X=\bigcup_{s\geq 0} \X_s.$$
We say that a semigroup $(M_{s,t})_{0\leq s\leq t}$  is asymptotically homogeneous if there exists a homogeneous semigroup $(N_t)_{t\geq0}$ defined on $\X$ and satisfying Assumption \ref{as:Mst}, such that for all $s\geq0$
\begin{equation}\label{as:H0-ah}\tag{H'0}
\lim_{t\to \infty} \sup_{x \in \X_t}  \left\Vert \delta_x M_{t,t+s}-\delta_x N_s\right\Vert_{\mathrm{TV}} = 0
\end{equation}
In our framework, the assumptions  \eqref{as:A1I}-\eqref{as:A4I} rewrite as follows. There exist $s_0,\ro>0$, $c,d>0$ and some probability  measure $\nu$ on $\X_{s_0}$ such that
for any $t\geq s_0$ and $x\in \X_t$,
\begin{equation}\label{as:H1-ah}\tag{H'1}
\delta_x M_{t,t+\ro}\geq c \, m_{t,t+\ro}(x) \, \nu,
\end{equation}
and  for any  $\tau\geq 0$,
\begin{equation}\label{as:H2-ah}\tag{H'2}
d\,m_{t,t+\tau}(x) \leq \nu(m_{t,t+\tau}).
\end{equation}
As for the homogeneous case, writing $n_{t}(x)=\delta_x N_{t}\mathbf{1}$, for $x \in \X$ and $t\geq 0$, we need that
\begin{equation}\label{as:H3-ah}\tag{H'3}
 t\mapsto\|n_{t}\|_\infty\ \text{is locally bounded on }  \R_+.
\end{equation}

\begin{theo}\label{th:asymptotic-hom}
Let $s\geq 0$. Under Assumptions \eqref{as:H0-ah}, \eqref{as:H1-ah}, \eqref{as:H2-ah} and \eqref{as:H3-ah}, there exists a probability measure $\gamma$ on $\X$ and a positive bounded function $h_s$ on $\X_s$ such that 
$$\lim_{t\to \infty} \sup_{\mu \in\P(\X_s)} \biggl\| \frac{\mu  M_{s,t}}{ \nu(m_{s,t})}-  \mu(h_s)    \gamma  \biggr\|_{\mathrm{TV}}  = 0.$$
\end{theo}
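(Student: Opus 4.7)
The plan is to set $\gamma$ equal to the eigenmeasure provided by Theorem~\ref{th:homogene} applied to $N$, and to combine this with the contraction coming from Theorem~\ref{th:main} applied to $M$. The key preliminary is to verify that $N$ itself satisfies \eqref{as:H1-h}, \eqref{as:H2-h}, \eqref{as:H3-h}: passing to the limit $t\to\infty$ in \eqref{as:H1-ah} and \eqref{as:H2-ah} via \eqref{as:H0-ah} yields the corresponding conditions for $N$ with the same constants $c$, $d$, $\ro$ and probability measure $\nu$, while \eqref{as:H3-h} is just \eqref{as:H3-ah}. Theorem~\ref{th:homogene} then produces an eigenmeasure $\gamma\in\P(\X)$, which we use as the target.

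Next, I would apply Theorem~\ref{th:main} to $M$ (assuming $s\geq s_0$; the case $s<s_0$ follows by precomposing with $M_{s,s_0}$). The subdivision $t_i=s+i\ro$ together with $c_i=c$, $d_i=d$, $\nu_i=\nu$ (viewed on $\X_{t_i}$ through $\X_{s_0}\subset\X_{t_i}$), $\alpha=1/(cd)$ and $\beta=1/d$ form admissible coupling constants, so $C_{\alpha,\beta,\nu}(s,t)\geq\lfloor(t-s)/\ro\rfloor\,(-\log(1-cd))\to\infty$. Theorem~\ref{th:main}, used with reference $\nu$ itself and divided by $\nu(m_{s,t})$, provides a positive bounded function $h_s:\X_s\to(0,\beta]$ with $\nu(h_s)=1$ and
$$\sup_{\mu\in\P(\X_s)}\biggl\|\frac{\mu M_{s,t}}{\nu(m_{s,t})}-\mu(h_s)\,\pi_{s,t}\biggr\|_{\mathrm{TV}}\xrightarrow[t\to\infty]{}0,$$
where $\pi_{s,t}:=\nu M_{s,t}/\nu(m_{s,t})$; uniformity in $\mu$ uses $\mu(h_s)\leq\beta$.

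The heart of the argument is to prove $\pi_{s,t}\to\gamma$ in total variation. Splitting $t=u+\tau$ with $u,\tau$ both large, Lemma~\ref{denis}(iii) applied to $M$ on $[u,u+\tau]$ (which satisfies the same coupling conditions) gives
$$\bigl\|\pi_{s,u+\tau}-\nu M_{u,u+\tau}/\nu(m_{u,u+\tau})\bigr\|_{\mathrm{TV}}\leq 2(1-cd)^{\lfloor\tau/\ro\rfloor}.$$
For a fixed $\tau$, \eqref{as:H0-ah} entails $\|\nu M_{u,u+\tau}-\nu N_\tau\|_{\mathrm{TV}}\to 0$ as $u\to\infty$; since $\nu(n_\tau)\geq d\|n_\tau\|_\infty>0$ the denominators also converge, and $\|\nu M_{u,u+\tau}/\nu(m_{u,u+\tau})-\nu N_\tau/\nu(n_\tau)\|_{\mathrm{TV}}\to 0$. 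Applying Lemma~\ref{denis}(iii) now to the homogeneous $N$ gives $\|\nu N_\tau/\nu(n_\tau)-\gamma\|_{\mathrm{TV}}\leq 2(1-cd)^{\tau/\ro}$. Choosing $\tau$ first (to annihilate the two contraction terms) and then $u$ large (to use the homogeneity on a \emph{fixed} horizon) yields $\|\pi_{s,t}-\gamma\|_{\mathrm{TV}}<3\epsilon$ for $t$ large enough. Combined with the display from the previous step this finishes the proof.

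The main obstacle is precisely this double-limit structure: \eqref{as:H0-ah} only controls $M_{u,u+\tau}$ against $N_\tau$ for \emph{fixed} $\tau$, so one cannot directly compare $M_{s,t}$ with $N_{t-s}$ as $t\to\infty$. The trick is to use the strong contraction of Lemma~\ref{denis}(iii) on the tail interval $[u,u+\tau]$ to forget the initial distribution and the time $s$, reducing the problem to a homogeneity comparison on a window of bounded length before sending $u\to\infty$.
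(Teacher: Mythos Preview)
Your proof is correct and follows essentially the same strategy as the paper: verify that $N$ inherits \eqref{as:H1-h}--\eqref{as:H3-h} from \eqref{as:H0-ah}--\eqref{as:H3-ah}, apply Theorem~\ref{th:main} to $M$ to reduce to showing that a reference profile converges to the eigenmeasure $\gamma$ of $N$, and then run the same double-limit argument (contraction on a tail window of length $\tau$ via Lemma~\ref{denis}, then homogeneity on that fixed window via \eqref{as:H0-ah}). The only cosmetic difference is that the paper takes $\delta_{x_0}$ as reference and phrases the tail argument through the auxiliary conservative semigroups $P^{(t)}$ and $Q^{(t)}$, while you use $\nu$ and work directly with $\pi_{s,t}=\nu M_{s,t}/\nu(m_{s,t})$; the underlying estimates are identical.
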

Notice that the TV norm above is the TV norm on the state space $\X,$ the measure $\mu M_{s,t}\in\M(\X_t)$ being extended by zero on $\X\setminus\X_t.$

Notice also that in Theorem~\ref{th:asymptotic-hom} we do not provide a speed of convergence.
It could be achieved by taking into account the speed of convergence of 
$M$ to $N$.
\begin{proof}
Following \eqref{Pstt}, we set  
$$\delta_x Q_{s,t}^{(t)}=\frac{\delta_x N_{t-s}}{n_{t-s}(x)}.$$
for any $x \in \X$, $t\geq s\geq 0$.
Fix $x_0\in \X$.  First, using  $\eqref{as:H0-ah}$,  for any $u\geq 0$, we obtain
 $$
 \lim_{t \to \infty} m_{t,t+u}(x_0)= n_{u}(x_0)>0
 $$
and then using again $\eqref{as:H0-ah}$,
\begin{equation}
\label{quasiaux}
\lim_{t\to \infty} \left\Vert \delta_{x_0} P^{(t+u)}_{t,t+u}-\delta_{x_0} Q^{(t+u)}_{t,t+u} \right\Vert_{\mathrm{TV}} = 0.
\end{equation}

Now we note that \eqref{as:H1-ah} and \eqref{as:H2-ah} ensure that \eqref{as:A1I}-\eqref{as:A4I} are satisfied for any regular subdivision of $[s,t]$ with step $\ro$ ({\it i.e.}  $t_i-t_{i-1}=\ro$) and $t_0\geq s_0,$ for the constants
$$c_i=c, \qquad d_i=d, \qquad \nu_i=\nu, \qquad \alpha=1/(cd), \qquad \beta=1/d.$$
We apply then Theorem \ref{th:main} to $M$ with $\gamma=\delta_{x_0}$, which ensures that, for every $s\geq 0,$ 
\be
\label{thmcor}
\lim_{t\to \infty} \sup_{\mu\in \mathcal P(\X_s)}   \biggl\| \frac{\mu  M_{s,t}}{ \nu(m_{s,t})}  -  \mu(h_s)   \delta_{x_0}P^{(t)}_{0,t}   \biggr\|_{\mathrm{TV}}=0,
\ee
while Lemma \ref{denis} $(ii)$ guarantees
\be
\label{ergaux}
\lim_{u\to \infty}
\sup_{t\geq 0, x,y \in \X_t}
\|\delta_xP_{t,t+u}^{(t+u)}-\delta_yP_{t,t+u}^{(t+u)}\|_{\mathrm{TV}}   = 0.
\ee
Using $\eqref{as:H0-ah}$ and letting $t\rightarrow \infty$ in  \eqref{as:H1-ah}, \eqref{as:H2-ah} and \eqref{as:H3-ah}, we obtain that the semigroup $N$ satifies \eqref{as:H1-h}, \eqref{as:H2-h} and  \eqref{as:H3-h}. 
Then by Theorem \ref{th:homogene}, there exists a probability measure $\gamma$ such that
$$
 \lim_{u \to \infty} \sup_{ x \in \X, t\geq 0}\|\delta_xQ_{t,t+u}^{(t+u)}-\gamma \|_{\mathrm{TV}}   = 0.
$$
Hence, \eqref{quasiaux} becomes
\[
\lim_{u\rightarrow \infty}  \limsup_{t\rightarrow\infty} \|\delta_{x_0}P_{t,t+u}^{(t+u)}-\gamma \|_{\mathrm{TV}}   = 0.
\]
Using now   $\delta_{x_0}P_{0,t+u}^{(t+u)}= (\delta_{x_0}P_{0,t}^{(t+u)})P_{t,t+u}^{(t+u)}$ and \eqref{ergaux}, we get
$$\lim_{u\rightarrow\infty} \sup_{t\geq 0} \|\delta_{x_0} P_{0,t+u}^{(t+u)}-\delta_{x_0}P_{t,t+u}^{(t+u)} \|_{\mathrm{TV}}   = 0.$$
Combing the two last bounds yields
\Bea
&&\limsup_{t\to\infty}  \|  \delta_{x_0}P^{(t)}_{0,t}-\gamma \|_{\mathrm{TV}} = \lim_{u\to\infty} \sup_{t\geq 0} \|  \delta_{x_0}P^{(t+u)}_{0,t+u}-\gamma \|_{\mathrm{TV}} \\
&&\qquad  \leq \lim_{u\rightarrow\infty} \sup_{t\geq 0} \|\delta_{x_0} P_{0,t+u}^{(t+u)}-\delta_{x_0}P_{t,t+u}^{(t+u)} \|_{\mathrm{TV}}  +
 \lim_{u\rightarrow \infty}  \limsup_{t\rightarrow\infty} \|\delta_{x_0}P_{t,t+u}^{(t+u)}-\gamma \|_{\mathrm{TV}}  = 0.
\Eea
Plugging this in \eqref{thmcor} concludes the proof. 
\end{proof}

\subsubsection{Example: the renewal equation with maximal age}
We consider the renewal equation on a domain bounded by a maximal age which increases along time.
When an individual reaches the maximal age, he dies.
We denote by $a_t$ the maximal age, which grows from $a_0$ to $a_\infty$ when $t\to+\infty.$
To avoid pathological situations we assume that $t\mapsto t-a_t$ is strictly increasing.
Inside the domain $[0,a_t)$ the individuals reproduce with a birth rate $b(a)$ bounded from below by $\underline b>0$ and from above by $\overline b.$
The partial differential equation which prescribes the evolution of the density $u_{s,t}(a)$ of individuals with age $a$ at time $t$ (starting from a distribution $u_s(a)$ at time $s\in[0,t)$) writes
\be\label{eq:age_max}\left\{\begin{array}{ll}
\partial_t u_{s,t}(a)+\partial_au_{s,t}(a)=0,& t>s,\ 0< a<a_t,
\vspace{2mm}\\
\dis u_{s,t}(0)=\int_0^{a_t}\!b(a)u_{s,t}(a)\,da,\qquad\qquad& t>s,
\vspace{2mm}\\
u_{s,s}(a)=u_s(a),& 0\leq a< a_s.
\end{array}\right.\ee
The motivation of such a model comes from \cite{BUS} which studies  a related branching model for the bus paradox problem.

The details of the construction of the associated semigroup are postponed to Appendix~\ref{appendix:maxage}.
Here  we only give the main steps.
For $t\geq s\geq0$ we define the operator $M_{s,t}:\B_b([0,a_t))\to \B_b([0,a_s))$ as follows:
for any $f\in \B_b([0,a_t))$ the family $(M_{s,t}f)_{t\geq s\geq0}$ is the unique solution to the equation
\be\label{eq:Duhamel_age_max}
M_{s,t}f(a)=f(a+t-s)+\int_s^t b_\tau(a+\tau-s)M_{\tau,t}f(0)\,d\tau,
\ee
where we have denoted $b_t(a)=b(a)\1_{[0,a_t)}(a)$ and $f$ has been extended by 0 beyond $a_t.$
Then for $\mu\in\M(\X_s)$ the measure $\mu M_{s,t}$ is defined on $[0,a_t)$ in such a way that $(\mu M_{s,t})(f)=\mu(M_{s,t}f)$ for all $f\in \B_b([0,a_t)).$

Using Theorem~\ref{th:asymptotic-hom} we prove the following ergodic result for the semigroup $(M_{s,t})_{t\geq s\geq0}$. It provides the long time asymptotic behavior of the measure solutions $(\mu M_{s,t})_{t\geq s}$ to \eqref{eq:age_max}.

\medskip

\begin{theo}\label{th:agemax:convergence}
Let $s\geq0.$
There exist $\gamma\in\P([0,a_\infty)),$ $\nu\in\P([0,a_s)),$ and a positive $h_s\in \B_b([0,a_s))$ such that for all $\mu\in\M([0,a_s))$,
$$\lim_{t\to \infty} \sup_{\mu \in\P(\X_s)} \biggl\| \frac{\mu  M_{s,t}}{ \nu(m_{s,t})}-  \mu(h_s)    \gamma  \biggr\|_{\mathrm{TV}}  = 0.$$
\end{theo}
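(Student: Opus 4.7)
The plan is to apply Theorem~\ref{th:asymptotic-hom} to the semigroup $(M_{s,t})$ of Equation~\eqref{eq:age_max}, using as candidate limit the renewal semigroup $(N_t)_{t\geq0}$ on $\X=[0,a_\infty)$ with autonomous maximal age $a_\infty$, defined by the Duhamel equation
\[N_tf(a)=f(a+t)\,\1_{a+t<a_\infty}+\int_0^t b_\infty(a+\tau)\,N_{t-\tau}f(0)\,d\tau,\qquad b_\infty:=b\,\1_{[0,a_\infty)}.\]
Its existence, uniqueness and compatibility with Assumption~\ref{as:Mst} follow from a Picard argument as in Appendix~\ref{appendix:maxage}.

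Hypotheses \eqref{as:H3-ah} and \eqref{as:H0-ah} should be the easy ones. For \eqref{as:H3-ah}, Gronwall applied to the Duhamel identity for $n_t(0)$ together with $b_\infty\leq\overline b$ yields $n_t(0)\leq\e^{\overline b t}$, whence the analogue of Corollary~\ref{cor:renewal:mt} (monotonicity plus the elementary comparison $n_t(a)\leq 2n_t(0)$) gives $\|n_t\|_\infty\leq 2\e^{\overline b t}$, which is locally bounded. For \eqref{as:H0-ah}, subtracting the Duhamel identities for $M_{t,t+s}$ and $N_s$ at a common $x\in[0,a_{t+s})$, I would estimate $|M_{t,t+s}f(x)-N_sf(x)|$ by iterated integrals whose integrands carry the indicator discrepancy $\1_{a+\tau<a_{t+\tau}}-\1_{a+\tau<a_\infty}$ as well as the difference in the transport boundary term; using $b\leq\overline b$ and Gronwall would then produce an inequality of the form
\[\sup_{x\in[0,a_{t+s})}\|\delta_xM_{t,t+s}-\delta_xN_s\|_{\mathrm{TV}}\leq C(s)\,(a_\infty-a_t)\xrightarrow[t\to\infty]{}0.\]

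The main step is to verify \eqref{as:H1-ah} and \eqref{as:H2-ah}. I would mimic the strategy used in the proof of Theorem~\ref{th:renewal:convergence}. For \eqref{as:H1-ah}, iterating twice the Duhamel identity~\eqref{eq:Duhamel_age_max} gives, for $f\geq0$,
\[M_{t,t+\rho}f(x)\geq\int_t^{t+\rho}\!\!b_\tau(x+\tau-t)\int_\tau^{t+\rho}\!\!b_{\tau'}(\tau'-\tau)\,f(t+\rho-\tau')\,\1_{t+\rho-\tau'<a_{t+\rho}}\,d\tau'\,d\tau.\]
Restricting $\tau'$ to a short terminal window $(t+\rho-\alpha,t+\rho)$ with $\alpha<a_{s_0}$ and using $\underline b\leq b\leq\overline b$ together with the monotonicity of $a_t$, for $\rho$ and $s_0$ large enough this produces a lower bound of the form $c\,\nu(f)$, uniform in $t\geq s_0$ and $x\in[0,a_t)$, where $\nu(f):=\alpha^{-1}\int_0^\alpha f(u)\,du$ is a probability measure on $[0,a_{s_0})=\X_{s_0}$. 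The required multiplicative bound $\delta_xM_{t,t+\rho}\geq c'\,m_{t,t+\rho}(x)\,\nu$ then follows by absorbing the $t$-uniform upper bound $m_{t,t+\rho}(x)\leq\|m_{t,t+\rho}\|_\infty\leq\e^{\overline b\rho}$, with $c'=c\,\e^{-\overline b\rho}$. The companion hypothesis \eqref{as:H2-ah} follows from the analogues of Lemma~\ref{lm:renewal:mt} and Corollary~\ref{cor:renewal:mt}, whose proofs transfer to the present nonautonomous setting thanks to the monotonicity of $t\mapsto a_t$.

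The delicate point will be \eqref{as:H1-ah} when $x$ approaches the moving boundary $a_t^-$: the individual then dies almost immediately and both $M_{t,t+\rho}f(x)$ and $m_{t,t+\rho}(x)$ vanish, so no additive lower bound on $M_{t,t+\rho}f(x)$ can be uniform in $x$. The remedy, borrowed from the homogeneous argument, is that the minoration \eqref{as:H1-ah} is multiplicative in $m_{t,t+\rho}(x)$: absorbing the smallness of $m_{t,t+\rho}(x)$ into the uniform upper bound $\|m_{t,t+\rho}\|_\infty$ converts the additive lower bound into the required multiplicative form, after which Theorem~\ref{th:asymptotic-hom} directly provides the claimed convergence and the existence of the data $\gamma$, $\nu$ and $h_s$.
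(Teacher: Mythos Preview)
Your overall strategy---applying Theorem~\ref{th:asymptotic-hom} by checking \eqref{as:H0-ah}--\eqref{as:H3-ah} with the autonomous renewal semigroup $N$ on $[0,a_\infty)$ as the limit---is exactly the paper's. Your sketches for \eqref{as:H0-ah} and \eqref{as:H3-ah} are essentially correct and match the paper's computations.

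There is, however, a genuine gap in your treatment of \eqref{as:H1-ah} near the moving boundary. You first claim a lower bound $M_{t,t+\rho}f(x)\geq c\,\nu(f)$ uniform in $x\in[0,a_t)$, then correctly note in the final paragraph that no such uniform additive bound can hold as $x\to a_t^-$. Your proposed remedy---``absorbing the smallness of $m_{t,t+\rho}(x)$ into the uniform upper bound $\|m_{t,t+\rho}\|_\infty$''---is circular: converting an additive bound into a multiplicative one via $m_{t,t+\rho}(x)\leq\|m_{t,t+\rho}\|_\infty$ still requires the additive bound to hold, which is precisely what fails near the boundary. The actual mechanism, which the paper exploits, is that for $x$ close to $a_t$ the first Duhamel term $\1_{x+\rho<a_{t+\rho}}$ in $m_{t,t+\rho}(x)$ vanishes, so that
\[m_{t,t+\rho}(x)=\int_t^{t+\rho}b_\tau(x+\tau-t)\,m_{\tau,t+\rho}(0)\,d\tau\leq \e^{\overline b\rho}\int_t^{t+\rho}b_\tau(x+\tau-t)\,d\tau,\]
while the iterated Duhamel formula yields $M_{t,t+\rho}f(x)\geq\alpha\underline b\,\nu(f)\int_t^{t+\rho}b_\tau(x+\tau-t)\,d\tau$. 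The \emph{same} integral $\int_t^{t+\rho}b_\tau(x+\tau-t)\,d\tau$---which measures the window before the individual exits---appears on both sides and cancels, yielding the multiplicative bound. Without isolating this common factor, the argument does not close.

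A smaller issue: for \eqref{as:H2-ah} you invoke analogues of Lemma~\ref{lm:renewal:mt} and Corollary~\ref{cor:renewal:mt}, but those rely on the specific structure of the homogeneous renewal equation (in particular $m_t\geq1$), and monotonicity of $t\mapsto m_{s,t}(a)$ is not clear in the max-age model since an individual near the boundary may die before reproducing. The paper instead proves the comparison $m_{s,t}(a)\leq(\overline b/\underline b)\,m_{s,t}(0)$ directly from the Duhamel formula using $\underline b\leq b\leq\overline b$, and then compares $m_{s,t}(0)$ with $\nu(m_{s,t})$ by elementary estimates; you should do the same rather than transfer lemmas whose proofs do not carry over.
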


Before proving this theorem we start with a useful lemma. 
\begin{lem}\label{lm:agemax:Gronwall}
For any $s\leq t$ and any $f\in C_0([0,a_t))$, we have
\[\|M_{s,t}f\|_\infty\leq\e^{\overline b(t-s)}\|f\|_\infty.\]
\end{lem}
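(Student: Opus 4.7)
My plan is to derive the estimate directly from the Duhamel representation \eqref{eq:Duhamel_age_max} followed by a (backward) Gr\"onwall argument. First I would bound each term on the right-hand side: using $|f(a+t-s)|\leq\|f\|_\infty$ (with the convention that $f$ is extended by $0$ beyond $a_t$), and $b_\tau(a+\tau-s)\leq\overline b$, I get for every $a\in[0,a_s)$
\[
|M_{s,t}f(a)|\leq \|f\|_\infty+\overline b\int_s^t |M_{\tau,t}f(0)|\,d\tau\leq \|f\|_\infty+\overline b\int_s^t \|M_{\tau,t}f\|_\infty\,d\tau.
\]
Taking the supremum over $a$ on the left gives the integral inequality
\[
\varphi(s):=\|M_{s,t}f\|_\infty\leq \|f\|_\infty+\overline b\int_s^t \varphi(\tau)\,d\tau,
\]
with $\varphi(t)=\|f\|_\infty$.

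Next I would apply a backward Gr\"onwall lemma. Setting $G(s)=\int_s^t\varphi(\tau)\,d\tau$, one has $G'(s)=-\varphi(s)\geq -\|f\|_\infty-\overline b G(s)$, so $(e^{\overline b s}G(s))'\geq -\|f\|_\infty e^{\overline b s}$. Integrating from $s$ to $t$ and using $G(t)=0$ yields $G(s)\leq \overline b^{-1}\|f\|_\infty(e^{\overline b(t-s)}-1)$, hence
\[
\varphi(s)\leq \|f\|_\infty+\overline b\, G(s)\leq \|f\|_\infty\,e^{\overline b(t-s)},
\]
which is the desired bound.

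The only minor subtlety is that Equation~\eqref{eq:Duhamel_age_max} is a fixed-point equation: a priori it is not immediate that $s\mapsto\varphi(s)$ is locally bounded (and measurable) so that the Gr\"onwall step applies. This is handled by the construction of $M_{s,t}f$ in Appendix~\ref{appendix:renewal}/\ref{appendix:maxage}, where the unique solution to \eqref{eq:Duhamel_age_max} is obtained as the limit of Picard iterates, each of which satisfies the above bound, so the limit does as well. Aside from this bookkeeping, the proof is entirely routine.
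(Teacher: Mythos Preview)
Your proof is correct and follows essentially the same strategy as the paper: bound the Duhamel formula and apply Gr\"onwall. The only cosmetic difference is that the paper applies Gr\"onwall to the scalar function $s\mapsto|M_{s,t}f(0)|$ first (since only the value at $0$ enters the integral in~\eqref{eq:Duhamel_age_max}), obtaining $|M_{s,t}f(0)|\leq\|f\|_\infty\e^{\overline b(t-s)}$, and then substitutes this back into the Duhamel formula to bound $|M_{s,t}f(a)|$ for general $a$; your route via $\varphi(s)=\|M_{s,t}f\|_\infty$ is slightly less sharp in the intermediate step but reaches the same conclusion.
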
 

\begin{proof}
By definition of $M_{s,t}f(0)$ we have
\[|M_{s,t}f(0)|=\left|f(t-s)+\int_s^t b_\tau(\tau-s)M_{\tau,t}f(0)\,d\tau\right|\leq \|f\|_\infty+\overline b\int_s^t|M_{\tau,t}f(0)|\,d\tau\]
and the Grönwall's lemma gives $|M_{s,t}f(0)|\leq\|f\|_\infty\e^{\overline b(t-s)}.$
Then for $a\geq0$ we write
\[|M_{s,t}f(a)|\leq\|f\|_\infty+\overline b\|f\|_\infty\int_s^t\e^{\overline b(\tau-s)}d\tau=\e^{\overline b(t-s)}\|f\|_\infty.\]
\end{proof}

\begin{proof}[Proof of Theorem~\ref{th:agemax:convergence}]
We start by verifying~\eqref{as:H0-ah} with $\X= [0,a_\infty).$
The homogeneous semigroup $N$ on $\M(\X) \times \B_b(\X)$ is defined by the following Duhamel formula
\[N_{t}f(a)=f(a+t)+\int_0^t b_\infty(a+\tau)N_{t-\tau}f(0)\,d\tau.\] 
Existence, uniqueness and Assumption \ref{as:Mst} can be proved in the same way as for the homogeneous renewal equation (see Appendix~\ref{appendix:renewal}).

Fix $t>0,$ $a\in\X_t=[0,a_t),$ and $f\in C_0\big([0,a_t)\big)$ such that $\|f\|_\infty\leq1.$
For any $r\geq a_\infty$ we have
\[M_{t,t+r}f(a)=\int_t^{t+r}b_\tau(a+\tau-t)M_{\tau,t+r}f(0)\,d\tau\]
and
\[N_rf(a)=\int_t^{t+r}b_\infty(a+\tau-t)N_{t+r-\tau}f(0)\,d\tau.\]
We start by comparing $M_{t,t+r}f(0)$ and $N_rf(0).$
We have, using Lemma~\ref{lm:agemax:Gronwall},
\begin{align*}
&|N_rf(0)-M_{t,t+r}f(0)|\\
& \quad \leq
\int_t^{t+r}\Big|b_\infty(\tau-t)-b_\tau(\tau-t)\Big|\,|N_{t+r-\tau}f(0)|\,d\tau\\
&\hspace{20mm}+\int_t^{t+r}b_\tau(\tau-t)|N_{t+r-\tau}f(0)-M_{\tau,t+r}f(0)|\,d\tau\\
&\quad \leq\int_t^{t+r}\overline b\,\1_{a_\tau\leq\tau-t\leq a_\infty}\e^{\overline b(t+r-\tau)}d\tau
+\overline b\int_t^{t+r}|N_{t+r-\tau}f(0)-M_{\tau,t+r}f(0)|\,d\tau\\
& \quad \leq\overline b\, \e^{\overline b r}(a_\infty-a_t)+\overline b\int_t^{t+r}|N_{t+r-\tau}f(0)-M_{\tau,t+r}f(0)|\,d\tau,
\end{align*}
which gives by Gr\"onwall's lemma
\[|N_rf(0)-M_{t,t+r}f(0)|\leq \overline b\, \e^{2\overline b r}(a_\infty-a_t).\]
Now we come back to $a\in[0,a_t)$ and we have similarly
\begin{align*}
|N_rf(a)-M_{t,t+r}f(a)|&\leq
\overline b \e^{\overline b r}(a_\infty-a_t)+\overline b\int_t^{t+r}|N_{t+r-\tau}f(0)-M_{\tau,t+r}f(0)|\,d\tau\\
&\leq\overline b\, \e^{\overline b r}(a_\infty-a_t)+{\overline b}^2r\e^{2\overline b r}(a_\infty-a_t)\\
&\leq\max\Big(\overline b \e^{\overline b r},\big(\overline b \e^{\overline b r}\big)^2r\Big)(a_\infty-a_t).
\end{align*}
We deduce that for all $r\geq a_\infty$,
\[\sup_{0\leq a<a_t}\|\delta_aM_{t,t+r}-\delta_aN_r\|_{\mathrm{TV}}\leq\max\Big(\overline b \e^{\overline b r},\big(\overline b \e^{\overline b r}\big)^2r\Big)(a_\infty-a_t)
\xrightarrow[t\to+\infty]{}0.\]

Now we turn to~\eqref{as:H1-ah}.
Iterating the Duhamel formula~\eqref{eq:Duhamel_age_max}, we get for $f\geq0$,
\begin{align*}
M_{s,t}f(a)&=f(a+t-s)+\int_s^t b_\tau(a+\tau-s)M_{\tau,t}f(0)\,d\tau\\
&=f(a+t-s)+\int_s^t b_\tau(a+\tau-s) f(t-\tau)\,d\tau\\
&\hspace{4mm}+\int_s^t b_\tau(a+\tau-s)\int_\tau^t b_{\tau'}(\tau'-\tau)f(t-\tau')\,d\tau'd\tau +(\geq0).
\end{align*}
Thus
\begin{align*}
M_{s,t}f(a) &\geq\int_s^t b_\tau(a+\tau-s)
\int_\tau^t b_{\tau'}(\tau'-\tau)f(t-\tau')\,d\tau'd\tau.
\end{align*}
We consider $s$ large enough so that $\Delta:=a_\infty-a_s\leq a_s/2.$
We set $\alpha:=\frac\Delta2$ and take $t-s=2\Delta.$
For $s\leq\tau\leq \tau'\leq t$ we have $\tau'-\tau\leq t-s=2\Delta\leq a_s\leq a_\tau'$ and so $b_{\tau'}(\tau'-\tau)=b(\tau'-\tau)\geq\underline b.$
We deduce that
\[M_{s,t}f(a)\geq\underline b\int_s^t b_\tau(a+\tau-s)\int_\tau^tf(t-\tau')\,d\tau'd\tau.\]
We consider separately $a\leq a_s-\alpha$ and $a\geq a_s-\alpha.$
For $a\leq a_s-\alpha$ et $\tau\in[s,s+\alpha]$ we have $a+\tau-s\leq a_s\leq a_\tau $ and so $b_\tau(a+\tau-s)=b(a+\tau-s)\geq\underline b.$
Thus we can write
\begin{align*}
M_{s,t}f(a)
&\geq\underline b\int_s^t b_\tau(a+\tau-s)
\int_\tau^t f(t-\tau')\,d\tau'd\tau\\
&\geq\underline b^2\int_s^{s+\alpha}\int_{t-\alpha}^t f(t-\tau')\,d\tau'd\tau
\geq\alpha \underline b^2\int_0^\alpha f(r)\,dr=(\alpha\underline b)^2\nu(f),
\end{align*}
where $\nu(f)=\frac1\alpha\int_0^\alpha f(r)\,dr.$
Now for $a>a_s-\alpha$ we have that if $\tau\geq t-\alpha$ then $a+\tau-s\geq a_s+\Delta=a_\infty$ and so $b_\tau(a+\tau-s)=0.$
We deduce that
\begin{align*}
M_{s,t}f(a)
&\geq\underline b\int_s^t b_\tau(a+\tau-s)
\int_\tau^t f(t-\tau')\,d\tau'd\tau\\
&\geq\underline b\int_s^t b_\tau(a+\tau-s)
\int_{t-\alpha}^t f(t-\tau')\,d\tau'd\tau=\alpha\underline b\bigg(\int_s^t b_\tau(a+\tau-s)
d\tau\bigg)\nu(f).
\end{align*}
To conclude that~\eqref{as:H1-ah} is satisfied it remains to compare these lower bounds with $m_{s,t}(a).$
We start from the Duhamel formula
\[m_{s,t}(a)=\1_{a+t-s< a_t}+\int_s^t b_\tau(a+\tau-s)m_{\tau,t}(0)\,d\tau\]
and use Lemma~\ref{lm:agemax:Gronwall} which ensures that $m_{\tau,t}(0)\leq \e^{\overline b(t-\tau)}.$
If $a\leq a_s-\alpha$, we write that
\[m_{s,t}(a)\leq1+\overline b\int_s^t\e^{\overline b(t-\tau)}d\tau=\e^{2\overline b\Delta}.\]
For $a>a_s-\alpha$ we use that $a+t-s\geq a_s+\Delta=a_\infty\geq a_t$ to write
\[m_{s,t}(a)\leq\int_s^tb_\tau(a+\tau-s)m_{\tau,t}(0)\,d\tau\leq\e^{\overline b\Delta}\int_s^tb_\tau(a+\tau-s)\,d\tau.\]
Finally we get
\[M_{s,t}f(a)\geq\min\big(\alpha\underline b\, \e^{-\overline b\Delta},(\alpha\underline b\, \e^{-\overline b\Delta})^2\big) \,m_{s,t}(a)\,\nu(f).\]

\medskip

For~\eqref{as:H2-ah} we start by comparing $m_{s,t}(a)$ to $m_{s,t}(0).$
We have
\begin{align*}
m_{s,t}(a)&=\1_{a+t-s< a_t}+\int_s^t b_\tau(a+\tau-s)m_{\tau,t}(0)\,d\tau\\
&\leq\1_{t-s< a_t}+ \overline b\int_s^t \1_{a+\tau-s\leq a_\tau }m_{\tau,t}(0)\,d\tau \, \leq \, \1_{t-s< a_t}
+\overline b\int_s^t \1_{\tau-s\leq a_\tau }m_{\tau,t}(0)\,d\tau\\
&\leq\1_{t-s< a_t}+\frac{\overline b}{\underline b}\int_s^t b_\tau(\tau-s)m_{\tau,t}(0)\,d\tau
\leq\frac{\overline b}{\underline b}m_{s,t}(0).
\end{align*}

Then we compare $m_{s,t}(0)$ to $\nu(m_{s,t}).$
We consider separately the cases $s\leq t\leq s+\alpha$ and $t>s+\alpha.$
For $s\leq t\leq s+\alpha$ we have, since $\alpha+t-s\leq 2\alpha=\Delta\leq a(0)\leq a_t,$
\[\int_0^\alpha m_{s,t}(a)\,da\geq\int_0^\alpha\1_{a+t-s< a_t}da\geq\alpha\]
and we have already seen that $m_{s,t}(0)\leq \e^{\overline b(t-s)}$, so
\[m_{s,t}(0)\leq \e^{\alpha\overline b}\nu(m_{s,t})\]
and
\[\frac{\underline b}{\overline b}\e^{-\alpha\overline b}\|m_{s,t}\|_\infty\leq\nu(m_{s,t}).\]
For the case $t>s+\alpha$ we split into two steps.
We start by comparing $m_{s,t}(0)$ to $m_{r,t}(0)$ for $s\leq r\leq s+\alpha.$
The semigroup property allows to decompose $m_{s,t}(0)=\delta_0 M_{s,r}m_{r,t}.$
Lemma~\ref{lm:agemax:Gronwall} ensures that $\|\delta_0M_{s,r}\|_{\mathrm{TV}}\leq\e^{\overline b(r-s)}$ so,
using again the bound $m_{r,t}(a)\leq\frac{\overline b}{\underline b}m_{r,t}(0),$ we get
\[m_{s,t}(0)=\delta_0 M_{s,r}m_{r,t}\leq \|\delta_0M_{s,r}\|_{\mathrm{TV}}\frac{\overline b}{\underline b}m_{r,t}(0)
\leq\frac{\overline b}{\underline b}\e^{\alpha\overline b}m_{r,t}(0).\]
To conclude we write
\begin{align*}
\frac1\alpha\int_0^\alpha m_{s,t}(a)\,da&\geq\frac1\alpha\int_0^\alpha\int_s^{s+\alpha}b_\tau(a+\tau-s)m_{\tau,t}(0)\,d\tau da\\
&\geq\underline b \int_s^{s+\alpha}m_{\tau,t}(0)\,d\tau \geq\frac{\alpha\underline b^2}{\overline b}\e^{-\alpha\overline b}m_{s,t}(0).
\end{align*}
Finally, we have proved that for all $t>s+\alpha$ we have
\[\frac{\alpha\underline b^3}{\overline b^2}\e^{-\alpha\overline b}\|m_{s,t}\|_\infty\leq\nu(m_{s,t}).\]
Finally, \eqref{as:H3-ah} comes from similar computations and a generalisation of Lemma~\ref{lm:agemax:Gronwall} for $N$.
\end{proof}

\subsection{Periodic semigroups and the renewal equation}
\label{sect:periodic}

In this section, we establish the convergence to a periodic profile for periodic semigroups.
This generalizes the Floquet theory~\cite{F83} for periodic matrices.
We apply this result to the renewal equation and obtain an explicit exponential rate of convergence.
Let us mention that it provides an exponential decay to Floquet eigenelements for a periodic PDE, which up to our knowledge has not been achieved so far.

\subsubsection{Exponential convergence for periodic semigroups} 

We start by a definition of the so-called Floquet eigenelements.

\begin{defi}[Periodic semigroup and Floquet eigenelements]
We say that a semigroup $(M_{s,t})_{0\leq s\leq t}$ is periodic with period $T$ if for all $t\geq0$ we have $\X_{t+T}=\X_t$ and for all
$t\geq s\geq0$,
\[M_{s+T,t+T}=M_{s,t}.\]
We say that $(\lambda_F,\gamma_{s,t},h_{s,t})_{t\geq s\geq0}$ is a Floquet family for $(M_{s,t})_{t\geq s\geq0}$ if
for all $t\geq s\geq0$ the triplet $(\lambda_F,\gamma_{s,t},h_{s,t})\in \R\times\M(\X_s)\times \B_b(\X_t),$
for all $s\geq0$ we have $\gamma_{s,s}\in\P(\X_s)$ and $\gamma_{s,s}(h_{s,s})=1,$
for all $t\geq s\geq0$,
\[\gamma_{s+T,t+T}=\gamma_{s,t}=\gamma_{s,t+T}\qquad\text{and}\qquad h_{s+T,t+T}=h_{s,t}=h_{s,t+T},\]
and
\[\gamma_{s,s}M_{s,t}=\e^{\lambda_F(t-s)}\gamma_{s,t}\qquad\text{and}\qquad M_{s,t}h_{t,t}=\e^{\lambda_F(t-s)}h_{s,t}.\]
\end{defi}

We state the general periodic result, recalling Definition  \eqref{eq:coupling-capacity} of the coupling capacity $C_{\alpha,\beta,\nu}(s,t)$.

\begin{theo}\label{th:Floquet}
Let $(M_{s,t})_{t\geq s\geq0}$ be a $T$-periodic semigroup
and let $\alpha,\beta\geq1$ such that for all $s\geq0$, there exists $\nu\in\P(\X_s)$ such that $C_{\alpha,\beta,\nu}(s,t)\to+\infty$ when $t\to+\infty.$
Assume also that the function $(s,t)\mapsto\|m_{s,t}\|_\infty$ is locally bounded.
Then there exists a unique $T$-periodic Floquet family $(\lambda_F,\gamma_{s,t},h_{s,t})_{t\geq s\geq0}$ for $(M_{s,t})_{t\geq s\geq0}$
and there exist $C,\rho>0$ such that for all $t\geq s\geq0$ and all $\mu\in\M(\X_s)$,
\begin{equation}\label{eq:conv_Floquet}
\big\|\e^{-\lambda_F(t-s)}\mu M_{s,t}-\mu(h_{s,s})\gamma_{s,t}\big\|_{\mathrm{TV}}\leq C\e^{-\rho (t-s)}\left\|\mu\right\|_{\mathrm{TV}}.
\end{equation}
\end{theo}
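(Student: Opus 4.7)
The plan is to apply Theorem~\ref{th:main} at each initial time $s$, to construct the Floquet family from its asymptotic outputs together with the $T$-periodicity of $M$, and to upgrade the convergence to an exponential rate by showing that $C_{\alpha,\beta,\nu}(s,t)$ grows linearly in $t-s$.

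First, for every $s\geq0$, pick a probability $\nu_s\in\P(\X_s)$ with $C_{\alpha,\beta,\nu_s}(s,t)\to\infty$ and set $\nu_{s+T}:=\nu_s$ (allowed since $\X_{s+T}=\X_s$). Theorem~\ref{th:main} provides $h_s(x)=\lim_{t\to\infty}m_{s,t}(x)/\nu_s(m_{s,t})$ with $\nu_s(h_s)=1$, and the identity $m_{s+T,t+T}=m_{s,t}$ yields $h_{s+T}=h_s$. Writing
$$\frac{M_{s,s+T}m_{s+T,t}(x)}{\nu_s(m_{s+T,t})}=\frac{m_{s,t}(x)}{\nu_s(m_{s,t})}\cdot\frac{\nu_s(m_{s,t})}{\nu_s(m_{s+T,t})}$$
and letting $t\to\infty$, the left side converges to $M_{s,s+T}h_s(x)$ and the first factor on the right to $h_s(x)$, so the remaining ratio converges to a positive constant which I denote $\e^{\lambda_F T}$; this gives $M_{s,s+T}h_s=\e^{\lambda_F T}h_s$. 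Existence of this mass ratio limit comes from applying Theorem~\ref{th:main} to both $\mu=\nu_s$ and $\mu=\nu_s M_{s,s+T}$, whose images under $M_{s+T,t}$ differ only through this scalar factor. Independence of $\lambda_F$ with respect to $s$ and $\nu_s$ follows by computing $M_{s,s'+T}h_{s'}$ in two ways, using periodicity $M_{s+T,s'+T}=M_{s,s'}$ and the eigen-equation at time $s'$.

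The left eigen-probability $\gamma_{s,s}\in\P(\X_s)$ is built as the total-variation limit of $\gamma_n:=\nu_s M_{s,s+nT}/\nu_s(m_{s,s+nT})$: Lemma~\ref{denis}(iii) and the divergence of $C_{\alpha,\beta,\nu_s}(s,s+nT)$ make $(\gamma_n)$ Cauchy. The recursion
$$\gamma_{n+1}=\frac{\nu_s(m_{s,s+nT})}{\nu_s(m_{s,s+(n+1)T})}\,\gamma_n M_{s,s+T}$$
(using $M_{s+nT,s+(n+1)T}=M_{s,s+T}$) and the already-established convergence of the mass ratio to $\e^{-\lambda_F T}$ yield in the limit $\gamma_{s,s}M_{s,s+T}=\e^{\lambda_F T}\gamma_{s,s}$. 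I then rescale $h_s$ so that $\gamma_{s,s}(h_s)=1$ and extend the family for $t\geq s$ by setting $h_{s,t}:=\e^{-\lambda_F(t-s)}M_{s,t}h_{t,t}$ and $\gamma_{s,t}:=\e^{-\lambda_F(t-s)}\gamma_{s,s}M_{s,t}$; the eigenvalue equations are automatic. The periodicities $\gamma_{s+T,t+T}=\gamma_{s,t}$ and $h_{s+T,t+T}=h_{s,t}$ reduce to $\gamma_{s+T,s+T}=\gamma_{s,s}$, $h_{s+T}=h_s$, and $M_{s+T,t+T}=M_{s,t}$, while $\gamma_{s,t+T}=\gamma_{s,t}$ and $h_{s,t+T}=h_{s,t}$ follow by a direct semigroup computation from the eigen-equations at time $s$.

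For the quantitative estimate~\eqref{eq:conv_Floquet}, I exploit that $T$-periodicity allows to translate any admissible family of coupling constants by $T$: if $(c_i,d_i,t_i,\nu_i)$ is $(\alpha,\beta,\nu_s)$-admissible on $[s,s+\tau]$, then the shifted data with times $t_i+T$ and measures $\nu_i$ (viewed on $\X_{t_i+T}=\X_{t_i}$) remains admissible on $[s+T,s+T+\tau]$ because \eqref{as:A1I}--\eqref{as:A4I} involve only quantities that are $T$-periodic. Concatenating $n$ such shifted blocks gives $C_{\alpha,\beta,\nu_s}(s,s+n\tau)\geq nC_1$ with $C_1>0$ the coupling gain over one period, hence $C_{\alpha,\beta,\nu_s}(s,t)\geq\rho(t-s)-K$ for some $\rho,K>0$. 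Plugging this exponential lower bound into Theorem~\ref{th:main} and converting the asymptotic profile $\nu_s(m_{s,t})\,\gamma_t$ into $\e^{\lambda_F(t-s)}\gamma_{s,t}$, using that $\nu_s(m_{s,t})\,\e^{-\lambda_F(t-s)}$ is bounded (by the convergence of $\gamma_{s,s}(m_{s,t})/\nu_s(m_{s,t})$ to $\gamma_{s,s}(h_s)=1$ together with $\gamma_{s,s}(m_{s,t})=\e^{\lambda_F(t-s)}\gamma_{s,t}(\1)$ and the $T$-periodicity of $\gamma_{s,t}(\1)$ in $t$) yields \eqref{eq:conv_Floquet}. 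Uniqueness reduces to that of $h_s$ in Theorem~\ref{th:main} and of the Perron eigenmeasure of $M_{s,s+T}$, both supplied by the contraction just proved.

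The main obstacle is the concatenation step: ensuring that the admissibility conditions \eqref{as:A3I}--\eqref{as:A4I}, which involve the mass for all $\tau\geq t_N$, remain valid under $T$-shifts with the same constants $(\alpha,\beta)$, so that the mere divergence of $C_{\alpha,\beta,\nu_s}$ is boosted into a linear lower bound. A secondary difficulty is the bookkeeping to translate the general approximation of Theorem~\ref{th:main} — whose profile is the unknown $\nu_s(m_{s,t})\,\gamma_t$ — into the intrinsic Floquet form $\e^{\lambda_F(t-s)}\gamma_{s,t}$ at the exponential rate, which forces one to track the convergence of $\nu_s(m_{s,t})\,\e^{-\lambda_F(t-s)}$ to a periodic quantity with the same speed.
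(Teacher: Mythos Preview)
Your proposal is correct and follows essentially the same route as the paper: construct $h_s$ and $\gamma_{s,s}$ as limits via Theorem~\ref{th:main}, identify them as eigenvectors of $M_{s,s+T}$, show $\lambda_F$ is independent of $s$, extend to $(\gamma_{s,t},h_{s,t})$, and obtain the exponential rate by concatenating $T$-shifted admissible coupling data. The paper streamlines your ``secondary difficulty'' by switching (via Remark~\ref{rq:A4}) to the reference measure $\nu=\gamma_{s,s}$ when applying Theorem~\ref{th:main} for the final estimate, so that $\nu(m_{s,t})=\gamma_{s,s}(m_{s,t})=\e^{\lambda_F(t-s)}\gamma_{s,t}(\1)$ holds exactly and the conversion to Floquet form is immediate rather than requiring you to track the convergence of $\nu_s(m_{s,t})\e^{-\lambda_F(t-s)}$.
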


Notice that in general the exponential rate of convergence $\rho$ can be quantified.

\begin{proof}

We start by the construction of $(\gamma_{s,t})_{t\geq s\geq0}.$
From Theorem~\ref{th:main} there exist $C>0$ and $h_s:\X_s\to(0,\beta]$ such that $\nu(h_s)=1$ and for any $\mu,\gamma\in\M(\X_s)$ and all $t\geq s$,
\be
\label{casgenper}
\biggl\| \mu  M_{s,t}  -  \mu(h_s) \nu(m_{s,t}) \frac{\gamma M_{s,t}}{\gamma(m_{s,t})}  \biggr\|_{\mathrm{TV}} \leq C\, \nu(m_{s,t})\|\mu\|_{\mathrm{TV}}\,\e^{-C_{\alpha,\beta,\nu}(s,t)}.
\ee
Considering $t=s+kT$ for $k\in\N,$ $\mu=\nu$ and $\gamma=\nu M_{s,s+lT}$ for $l\in\N$ and using the periodicity of $M$, we get
\[\biggl\| \frac{\nu M_{s,s+kT}}{\nu(m_{s,s+kT})}  -  \frac{\nu M_{s,s+(k+l)T}}{\nu(m_{s,s+(k+l)T})}  \biggr\|_{\mathrm{TV}} \leq C\, \e^{-C_{\alpha,\beta,\nu}(s,s+kT)},\]
which ensures that $\big(\frac{\nu M_{s,s+kT}}{\nu(m_{s,s+kT})}\big)_{k\in\N}$ is a a Cauchy sequence in $(\M(\X_s),\|\cdot\|_{\mathrm{TV}}).$
We denote the limit $\gamma_{s,s},$ which belongs to $\P(\X_s).$
Using again~\eqref{casgenper} with $\mu=\nu$ we have that for all $\gamma\in\M_+(\X)$,
\be
\label{indsprime}
\frac{\gamma M_{s,s+kT}}{\gamma (m_{s,s+kT})}\xrightarrow[k\to\infty]{}\gamma_{s,s}.
\ee
For $f\in \B_b(\X_s)$ we have, using the periodicity of $M_{s,t},$
\[\gamma_{s,s} M_{s,s+(k+1) T}f=\gamma_{s,s} \, M_{s,s+k T}\,M_{s,s+ T}f,\]
which gives
\begin{equation}\label{eq:gamma_ss}
\frac{\gamma_{s,s} (m_{s,s+(k+1) T})}{\gamma_{s,s}(m_{s,s+k T})}\frac{\gamma_{s,s} M_{s,s+(k+1) T}f}{\gamma_{s,s} (m_{s,s+(k+1) T})}
=\frac{\gamma_{s,s} M_{s,s+kT}}{\gamma_{s,s}(m_{s,s+k T})}M_{s,s+ T}f.
\end{equation}
Letting  $f=\1$ in~\eqref{eq:gamma_ss}, $$\Lambda_s=\lim_{k\to\infty}\frac{\gamma_{s,s}(m_{s,s+(k+1) T})}{\gamma_{s,s} (m_{s,s+k T})}=\gamma_{s,s}(m_{s,s+ T})$$
and letting  $k\to\infty$,
\be
\label{gamlamb}
\Lambda_s\gamma_{s,s} (f)=\gamma_{s,s} M_{s,s+ T}f.
\ee
We check now that $\Lambda_s$ is independent of $s.$
To do so we start by proving that for any $s'>s,$
\be\label{gammas'}\gamma_{s',s'}=\frac{\gamma_{s,s}M_{s,s'}}{\gamma_{s,s}(m_{s,s'})}.\ee
By the semigroup property we have on the one hand, using~\eqref{indsprime} with $\gamma=\gamma_{s,s}M_{s,s'},$
\[\frac{\gamma_{s,s}M_{s,s'+k T}}{\gamma_{s,s}m_{s,s'+k T}}=\frac{(\gamma_{s,s}M_{s,s'})M_{s',s'+k T}}{(\gamma_{s,s}M_{s,s'})m_{s',s'+k T}}\xrightarrow[k\to\infty]{}\gamma_{s',s'},\]
and on the other hand using~\eqref{gamlamb}
\[\frac{\gamma_{s,s}M_{s,s'+k T}}{\gamma_{s,s}m_{s,s'+k T}}=\frac{\gamma_{s,s}M_{s,s+kT}M_{s+k T,s'+k T}}{\gamma_{s,s}M_{s,s+k T}m_{s+k T,s'+k T}}
=\frac{\Lambda_s^k\gamma_{s,s}M_{s,s'}}{\Lambda_s^k\gamma_{s,s}(m_{s,s'})}=\frac{\gamma_{s,s}M_{s,s'}}{\gamma_{s,s}(m_{s,s'})}.\]
This proves~\eqref{gammas'} which gives, using again~\eqref{gamlamb} and the semigroup property,
\[\Lambda_{s'}\gamma_{s',s'}=\gamma_{s',s'}M_{s',s'+T}=\frac{\gamma_{s,s}M_{s,s'+T}}{\gamma_{s,s}(m_{s,s'})}=\frac{\Lambda_s\gamma_{s,s}M_{s,s'}}{\gamma_{s,s}(m_{s,s'})}.\]
Testing this identity against $\1$ and using that $\gamma_{s,s}$ are probabilities we get that $\Lambda_{s'}=\Lambda_s,$ and we denote this constant by $\Lambda.$
Now we define $\lambda_F=(\log\Lambda)/T$ and for all $t\geq s$,
\[\gamma_{s,t}=\gamma_{s,s}M_{s,t}\e^{-\lambda_F(t-s)}.\]
The definition of $\lambda_F$ implies that $\gamma_{s,t}=\gamma_{s,t+T},$ and the identity $\gamma_{s+T,s+T}=\gamma_{s,s},$ which is clear on the definition~\eqref{indsprime} of $\gamma_{s,s},$ ensures the periodicity $\gamma_{s+T,t+T}=\gamma_{s,t}.$

We turn to the family $(h_{s,t})_{t\geq s\geq0}.$
Using remark~\ref{rq:A4} we have that Theorem~\ref{th:main} is valid for $\nu=\gamma_{s,s}$ and it ensures the existence of a harmonic function $$h_{s,s}=\lim_{t\to+\infty}\frac{m_{s,t}}{\gamma_{s,s}(m_{s,t})},$$ which satisfies $\gamma_{s,s}(h_{s,s})=1.$
Now we define for $s\leq t$,
\[h_{s,t}=\e^{-\lambda_F(t-s)}M_{s,t}h_{t,t}.\]
It only remains to check that $(h_{s,t})_{t\geq s\geq0}$ thus defined is $T$-periodic.
By definition of $h_{s,s}$ and using the $T$-periodicity of $M_{s,t}$ and $\gamma_{s,t}$ we have
\[h_{s,s}=\lim_{k\to\infty}\frac{m_{s,s+kT}}{\gamma_{s,s}(m_{s,s+kT})}=\lim_{k\to\infty}\frac{m_{s+T,s+(k+1)T}}{\gamma_{s+T,s+T}(m_{s+T,s+(k+1)T})}=h_{s+T,s+T}.\]
Then we write
\[h_{s,s}=\lim_{t\to\infty}\frac{m_{s,t+T}}{\gamma_{s,s}(m_{s,t+T})}
=\lim_{t\to\infty}\frac{M_{s,s+T}m_{s,t}}{\gamma_{s,s}(M_{s,s+T}m_{s,t})}
=\lim_{t\to\infty}\frac{M_{s,s+T}m_{s,t}}{e^{\lambda_FT}\gamma_{s,s}(m_{s,t})}
=\e^{-\lambda_FT}M_{s,s+T}h_{s,s},\]
where we have used the dominated convergence theorem for the last equality.
And finally
\[h_{s,s+T}=\e^{-\lambda_FT}M_{s,s+T}h_{s+T,s+T}=\e^{-\lambda_FT}M_{s,s+T}h_{s,s}=h_{s,s}.\]

Now we check the convergence~\eqref{eq:conv_Floquet}.
Applying Theorem~\ref{th:main} with $\nu=\gamma=\gamma_{s,s}$ and $s_0=s,$ we get that there exists $C>0$ such that for all $t\geq s\geq 0$ and all $\mu\in\M(\X_s)$,
\[\Bigl\| \mu  M_{s,t}  -  \mu(h_{s,s})\e^{\lambda_F(t-s)} \gamma_{s,t}\Bigr\|_{\mathrm{TV}} \leq C\,\e^{\lambda_F(t-s)} \gamma_{s,t}(\1)\|\mu\|_{\mathrm{TV}}\,\e^{-C_{\alpha,\beta,\nu}(s,t)}.\]
By periodicity we have $\gamma_{s,t}(\1)=\e^{-\lambda_F(t-s)}\gamma_{s,s}(m_{s,t})\leq\e^{|\lambda_F|T}\sup_{0\leq s\leq T, s\leq t\leq s+T}\|m_{s,t}\|_\infty.$
Still by periodicity, for all $n\in\N$ and all $t\geq s+nT$, we have $C_{\alpha,\beta,\nu}(s,t)\geq C_{\alpha,\beta,\nu}(0,nT)\big\lfloor\frac{t-s}{nT}\big\rfloor.$
Using that $C_{\alpha,\beta,\nu}(0,t)\to+\infty$ when $t\to+\infty$ we can choose $n\in\N$ large enough so that $C_{\alpha,\beta,\nu}(0,nT)>0$
and this gives~\eqref{eq:conv_Floquet} with $\rho=C_{\alpha,\beta,\nu}(0,nT)/nT.$

The uniqueness follows from the convergence.
Assume the existence of another Floquet family $(\tilde\lambda,\tilde\gamma_{s,t},\tilde h_{s,t}).$
Applying~\eqref{eq:conv_Floquet} to $\mu=\tilde\gamma_{s,s}$ guarantees that $\tilde\lambda_F=\lambda_F$ and $\tilde\gamma_{s,t}=\tilde\gamma_{s,s}(h_{s,s})\gamma_{s,t}.$
Since $\gamma_{s,s}$ and $\tilde\gamma_{s,s}$ are both probabilities this implies that $\tilde\gamma_{s,s}(h_{s,s})=1,$ and $\tilde\gamma_{s,t}=\gamma_{s,t}.$
Applying again~\eqref{eq:conv_Floquet} but with $\mu=\delta_x,$ $t=s+kT,$ and the test function $\tilde h_{s,s}=\tilde h_{s+kT,s+kT},$ we get $\tilde h_{s,s}=h_{s,s}.$
The equality $\tilde h_{s,t}=h_{s,t}$ is then ensured by the identity $\tilde\lambda_F=\lambda_F.$
\end{proof}

\subsubsection{Example: the periodic renewal equation}

We consider the renewal equation with a time-periodic division rate,
which is used as a model for circadian rhytms (see~\cite{CMP,CGL} and the references therein).
More precisely the equation is
\begin{equation}\label{eq:renewal-periodic}\left\{\begin{array}{ll}
\partial_t u_{s,t}(a)+\partial_a u_{s,t}(a)+b(t,a)u_{s,t}(a)=0, &\qquad  t>s,\ a>0,
\vspace{2mm}\\
\dis u_{s,t}(0)=2\int_0^\infty b(t,a)u_{s,t}(a)\,da, & \qquad  t>s,
\vspace{2mm}\\
u_{s,s}(a)=u_s(a),&\qquad a\geq0,
\end{array}\right.\end{equation}
and we assume that there exists $T>0$ such that $b(t+T,\cdot)=b(t,\cdot)$ for any $t\geq0.$
Additionally $b$ is supposed to be non-negative and globally bounded (in time and age) by a constant $\overline b>0.$
Similarly as in the homogeneous or asymptotically homogeneous case we associate to Equation~\eqref{eq:renewal-periodic} a semigroup $(M_{s,t})_{0\leq s\leq t}$ defined on $\B_b(\R_+)$ by the Duhamel formula
\bea
\label{eq:renewal-periodic:Duhamel}
M_{s,t}f(a)&=&f(a+t-s)\e^{-\int_s^t b(\tau,a+\tau-s)d\tau} \\
&&\qquad +2\int_s^t\e^{-\int_s^\tau b(\tau',a+\tau'-s)d\tau'}b(\tau,a+\tau-s)M_{\tau,t}f(0)\,d\tau \nonumber
\eea
and on $\M(\R_+)$ by setting $(\mu M_{s,t})(f)=\mu(M_{s,t}f)$ for all $f\in\B_b(\R_+).$
As for the homogeneous case, this semigroup is well defined and satisfies Assumption~\ref{as:Mst}.
Mimicking the proof of Lemma~\ref{lm:renewal:mt} we can prove the following monotonicity result.

\begin{lem}\label{lm:renewal-periodic:monotone}
For all $s,a\geq0,$ the function $t\mapsto m_{s,t}(a)$ is nondecreasing.
\end{lem}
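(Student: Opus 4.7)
My plan is to mimic the structure of Lemma~\ref{lm:renewal:mt}, but exploit the semigroup property to avoid redoing a separate fixed-point argument for the increment. The key identity is that, for any $\epsilon>0$,
\[m_{s,t+\epsilon}(a) = M_{s,t+\epsilon}\mathbf 1 (a) = M_{s,t}(M_{t,t+\epsilon}\mathbf 1)(a) = M_{s,t}(m_{t,t+\epsilon})(a),\]
so by linearity and positivity of $M_{s,t}$, it will suffice to establish the pointwise lower bound $m_{u,v}(b)\geq 1$ for all $0\leq u\leq v$ and $b\geq 0$. Indeed, once this is known, $M_{s,t}(m_{t,t+\epsilon}-\mathbf 1)(a)\geq 0$ by positivity of the semigroup, which gives $m_{s,t+\epsilon}(a)\geq m_{s,t}(a)$.

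The proof of the lower bound $m_{u,v}(b)\geq 1$ proceeds in two steps. First I would treat the case $b=0$. Fixing $v\geq 0$ and viewing $\phi:u\mapsto m_{u,v}(0)$ as a function on $[0,v]$, the Duhamel formula~\eqref{eq:renewal-periodic:Duhamel} applied at $a=0$ and $f=\mathbf 1$ identifies $\phi$ as a fixed point of
\[(\Gamma\psi)(u) = \e^{-\int_u^v b(\tau,\tau-u)\,d\tau}+2\int_u^v \e^{-\int_u^\tau b(\tau',\tau'-u)\,d\tau'}b(\tau,\tau-u)\,\psi(\tau)\,d\tau.\]
A direct computation gives $\Gamma\mathbf 1(u)=2-\e^{-\int_u^v b(\tau,\tau-u)\,d\tau}\geq 1$, and $\Gamma$ preserves the order $\psi_1\leq \psi_2$ (since $b\geq 0$). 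Equivalently, writing $\phi=1+\tilde\phi$, the function $\tilde\phi$ satisfies an equation of the form $\tilde\phi = f_0+L\tilde\phi$ with nonnegative forcing $f_0(u)=1-\e^{-\int_u^v b(\tau,\tau-u)\,d\tau}\geq 0$ and positive linear operator $L$, and the positivity-preservation argument used in Lemma~\ref{prop:renewal:Duhamel} yields $\tilde\phi\geq 0$, i.e. $\phi\geq 1$.

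Next, plugging $m_{\tau,v}(0)\geq 1$ back into~\eqref{eq:renewal-periodic:Duhamel} with arbitrary $a\geq 0$ gives
\[m_{u,v}(a)\geq \e^{-\int_u^v b(\tau,a+\tau-u)\,d\tau}+2\left(1-\e^{-\int_u^v b(\tau,a+\tau-u)\,d\tau}\right)= 2-\e^{-\int_u^v b(\tau,a+\tau-u)\,d\tau}\geq 1,\]
which establishes the desired bound and, combined with the semigroup step above, completes the proof.

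I expect the main technical point to be making the fixed-point argument for $\Gamma$ fully rigorous in the time-inhomogeneous setting: $\Gamma$ is no longer convolution-like, so Banach contraction only applies on short subintervals $[v-\delta,v]$ where $2\overline b\delta<1$. One must then bootstrap: having obtained $\phi\geq 1$ on $[v-\delta,v]$, use it as the known part in~\eqref{eq:renewal-periodic:Duhamel} to extend to $[v-2\delta,v-\delta]$, and so on, exhausting $[0,v]$ in finitely many steps. This is essentially the same bootstrap used in the appendix to establish existence and uniqueness of solutions to~\eqref{eq:renewal-periodic:Duhamel}, so no new machinery is needed, and the boundedness $b\leq\overline b$ makes $\delta$ a uniform constant.
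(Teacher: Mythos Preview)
Your argument is correct, and it is in fact a streamlining of the route the paper implicitly takes. The paper says only ``mimicking the proof of Lemma~\ref{lm:renewal:mt}'', which would mean reproducing the three-step structure of that lemma in the time-inhomogeneous setting: (i) show $m_{s,t}(0)\geq 1$ via the fixed-point map, (ii) show $t\mapsto m_{s,t}(0)$ is nondecreasing by identifying the increment $m_{s,t+\epsilon}(0)-m_{s,t}(0)$ as the fixed point of a positive map with nonnegative forcing, (iii) extend to general $a$ by plugging the monotonicity of $m_{\cdot,\cdot}(0)$ back into the Duhamel formula. You replace step~(ii) entirely by the semigroup identity $m_{s,t+\epsilon}=M_{s,t}\,m_{t,t+\epsilon}$ together with positivity of $M_{s,t}$, reducing the whole statement to the single inequality $m_{u,v}\geq \mathbf 1$, which is exactly step~(i) plus its easy extension to $a>0$. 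This avoids the second fixed-point argument on the increment, at no cost: the semigroup property and positivity are already part of Assumption~\ref{as:Mst} for the periodic renewal semigroup. The bootstrap you describe for the contraction on short subintervals is the standard one (the same as in Lemma~\ref{prop:renewal:Duhamel}), and the uniform bound $b\leq\overline b$ makes the step size $\delta$ independent of the subinterval, so finitely many steps suffice.
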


Notice however that the function $s\mapsto m_{s,t}(a)$ is not nonincreasing in general.
Yet, since $(M_{s,t})_{0\leq s\leq t}$ is $T$-periodic we have, using Lemma~\ref{lm:renewal-periodic:monotone}, $m_{s+T,t}(a)=m_{s,t-T}(a)\leq m_{s,t}(a).$

The aim of the current section is to provide sufficient conditions on $b$ so that we can apply Theorem~\ref{th:Floquet}.
In Theorem~\ref{th:renewal-periodic:general} we give a general such result,
and in Theorem~\ref{th:renewal-periodic:constant} we optimize the rate of convergence in the case when the division rate depends only on time.
Let us point out that in this latter case the mean number of individuals $m_{s,t}(a)$ does not depend on $a$ and satisfies a simple differential equation.
Classical Floquet theory \cite{F83} then easily applies for proving that it tends to a periodic solution.
However it is no longer the case for the age repartition described by the semigroup which remains an infinite-dimensional object.
Its asymptotic periodicity cannot be deduced from those of $m_{s,t}$.

The convergence of the solutions of the periodic renewal equation to the Floquet elements has been obtained in~\cite{MMP} by the way of entropy techniques. Here, we provide an explicit exponential rate of convergence.

\begin{theo}\label{th:renewal-periodic:general}
Let $b$ be a time-periodic function with period $T>0,$ non-negative and globally bounded by $\overline b>0.$
Assume that there exist $A\geq0$ and $\underline b>0$ such that
\[\forall t\geq0,\ \forall a\geq A,\qquad b(t,a)\geq\underline b.\]
Then there exists a unique Floquet family $(\lambda_F,\gamma_{s,t},h_{s,t})_{0\leq s\leq t}$ for the semigroup $(M_{s,t})_{0\leq s\leq t}$ and a constant $C>0$ such that for all $t\geq s\geq0$ and all $\mu\in\M(\R_+)$,
\[\big\|\e^{-\lambda_F(t-s)}\mu M_{s,t}-\mu(h_{s,s})\gamma_{s,t}\big\|_{\mathrm{TV}}\leq C\,\|\mu\|_{\mathrm{TV}}\,\e^{-\rho (t-s)}\]
where
\[\rho=\frac{-1}{A+2T}\log\bigg(1-\frac{2\underline bT\e^{-\overline b(3A+8T)}}{\frac{1}{2\overline bT}+\frac{A}{T}+3+\frac{1}{1-\e^{-\underline bT}}}\bigg).\]
\end{theo}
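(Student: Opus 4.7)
The plan is to verify the hypotheses of Theorem~\ref{th:Floquet} with an explicit coupling capacity that yields the announced rate $\rho$. The strategy follows the template of Theorem~\ref{th:renewal:convergence} in the homogeneous case, but with careful bookkeeping of constants to match the stated formula.

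First I would establish elementary bounds on $m_{s,t}$. A standard Grönwall argument applied to the Duhamel formula~\eqref{eq:renewal-periodic:Duhamel} with $f=\1$, first at $a=0$ and then at arbitrary $a$, gives $\|m_{s,t}\|_\infty \leq e^{2\overline b(t-s)}$, which in particular yields the local boundedness of $(s,t)\mapsto\|m_{s,t}\|_\infty$ required by Theorem~\ref{th:Floquet}. Adapting Corollary~\ref{cor:renewal:mt} via Lemma~\ref{lm:renewal-periodic:monotone} together with the $T$-periodicity (to compare $m_{\tau,t}(0)$ for $\tau$ in a window of length $T$), one obtains a comparison of the form $m_{s,t}(a) \leq K\, m_{s,t}(0)$ with $K$ depending only on $\overline b, \underline b, T$.

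Next I would construct admissible coupling constants over a regular subdivision of step $L=A+2T$, with $\nu$ the uniform probability measure on $[0,T]$. Iterating~\eqref{eq:renewal-periodic:Duhamel} twice and retaining only particles that (i)~survive without dividing on $[s,s+A]$, so that their age exceeds $A$ at time $s+A$ and $b$ is then pointwise bounded below by $\underline b$; (ii)~divide once in $[s+A,s+A+T]$; (iii)~divide a second time in $[s+A+T,s+A+2T]$ with the resulting newborn landing in $[0,T]$, produces a pointwise bound $M_{s,s+L}f(a) \geq c\, m_{s,s+L}(a)\,\nu(f)$ with $c$ proportional to $\underline b T\, e^{-\overline b(3A+8T)}$. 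The three terms in the exponent correspond respectively to the no-division window of length $A$, the two division windows of length $T$ each with age-shifted exponential survival factors, and the Grönwall upper bound on $m_{s,s+L}$ needed to convert absolute lower bounds into bounds relative to $m$. This verifies \eqref{as:A1I} uniformly along the subdivision.

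The mass conditions \eqref{as:A2I}--\eqref{as:A4I} reduce, via the inequality $m_{s,t}(a)\leq K m_{s,t}(0)$ and Lemma~\ref{lm:renewal-periodic:monotone}, to a uniform lower bound of the form $\nu(m_{s,t})\geq d\,\|m_{s,t}\|_\infty$. A direct Duhamel estimate, combined with the semigroup identity $m_{s,t}(0)=\delta_0 M_{s,s+T}\,m_{s+T,t}$ used to transfer between different initial times modulo $T$, yields
\[
\tfrac{1}{d},\ \alpha,\ \beta \ \leq\ \frac{1}{2\overline b T} + \frac{A}{T} + 3 + \frac{1}{1-e^{-\underline b T}},
\]
which is precisely the denominator appearing in the announced expression for $\rho$. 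With these constants in place, the coupling capacity along a subdivision of step $L$ satisfies $C_{\alpha,\beta,\nu}(0,nT) \geq -\lfloor nT/L\rfloor \log(1-cd) \to +\infty$, so Theorem~\ref{th:Floquet} provides the Floquet family $(\lambda_F,\gamma_{s,t},h_{s,t})$ and exponential convergence at rate $\rho = -\log(1-cd)/(A+2T)$; substitution of the explicit $c$ and $d$ recovers the stated formula. The main difficulty is purely quantitative: the structural argument is identical to the homogeneous case, but tracking all the exponential and combinatorial factors through the iterated Duhamel formula so that the product $cd$ has exactly the displayed form—in particular so that the total exponential cost is precisely $\overline b(3A+8T)$—requires care.
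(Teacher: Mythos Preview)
Your overall architecture is right---verify the hypotheses of Theorem~\ref{th:Floquet} via explicit $(c,d,\alpha,\beta,\nu)$ on a regular subdivision of step comparable to $A+2T$, then read off $\rho=-\log(1-cd)/(A+2T)$---but there is a real gap in your construction of the coupling measure and of~\eqref{as:A1I}.

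You take $\nu$ to be the uniform law on $[0,T]$ and propose to iterate Duhamel twice: survive without dividing on $[s,s+A]$, divide once on $[s+A,s+A+T]$, then divide a second time on $[s+A+T,s+A+2T]$. The first division does occur at age $\geq A$, so $b\geq\underline b$ there. But the individual produced by that first division has age $0$ at its birth time in $[s+A,s+A+T]$, and hence age at most $2T$ at the time of the second division. Since $A$ is arbitrary (possibly much larger than $2T$), there is \emph{no} pointwise lower bound on $b$ at the second division: the assumption $b(t,a)\geq\underline b$ only holds for $a\geq A$. Your lower bound on $M_{s,s+L}f(a)$ therefore collapses to zero for the second factor, and even in favorable regimes would produce a $\underline b^2$ rather than the linear $\underline b$ that the stated formula for $\rho$ requires.

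The paper avoids this by using a \emph{single} iteration of Duhamel over the window $[s,s+(n+2)T]$ with $n=\lfloor A/T\rfloor$, keeping only individuals that divide once in $[s+(n+1)T,s+(n+2)T]$ (where age is guaranteed $\geq A$), and absorbing the remaining evolution into the coupling measure itself:
\[
\nu(f)=\frac{\int_s^{s+T}M_{\tau,s+T}f(0)\,d\tau}{\int_s^{s+T}m_{\tau,s+T}(0)\,d\tau}.
\]
This semigroup-weighted $\nu$ is the key trick: it yields $c=2\underline bT\,\e^{-3\overline b(A+2T)}$ with a single factor of $\underline b$, and the mass comparison $\nu(m_{s,t})\geq d\|m_{s,t}\|_\infty$ is then obtained by slicing the Duhamel integral for $m_{s,t}(a)$ into blocks of length $T$ and using the lower bound on $b$ only for ages $\geq A$ to produce the geometric series giving the denominator. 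The product $cd$ then matches the stated $\rho$, with the extra $\e^{-2\overline bT}$ coming from the $d$-estimate, explaining the exponent $3A+8T=3(A+2T)+2T$.
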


\begin{proof}
We will use several times the inequality
\[\|m_{s,t}\|_\infty\leq\e^{2\overline b(t-s)},\]
whose proof follows Lemma~\ref{lm:agemax:Gronwall}.
In particular it ensures that the function $(s,t)\mapsto\|m_{s,t}\|_\infty$ is locally bounded.

Now we exhibit constants for the assumptions in Definition~\ref{def:coupling_constants}. We start with \eqref{as:A1I}.
Let $s\geq0$ and define $n:=\big\lfloor\frac{A}{T}\big\rfloor,$ so that $(n+1)T\in(A,A+T].$
From the Duhamel formula~\eqref{eq:renewal-periodic:Duhamel} and using the periodicity of the semigroup we get that for any $f\geq0$ and any $a\geq0$
\begin{align*}
M_{s,s+(n+2)T}f(a)&\geq2\int_{s+(n+1)T}^{s+(n+2)T}\e^{-\int_s^\tau b(\tau',a+\tau'-s)d\tau'}b(\tau,a+\tau-s)M_{\tau,s+(n+1)T}f(0)\,d\tau\\
&\geq 2\underline b\e^{-\overline b(A+2T)}\int_{s}^{s+T}M_{\tau,s+T}f(0)\,d\tau.
\end{align*}
We have $m_{s,s+(n+2)T}\leq\e^{2\overline b(A+2T)}$ and Lemma~\ref{lm:renewal-periodic:monotone} ensures that $\int_s^{s+T}m_{\tau,s+T}(0)\,d\tau\geq T.$
So \eqref{as:A1I} is satisfied with
$$\nu(f)=\frac{\int_s^{s+T}M_{\tau,s+T}f(0)\,d\tau}{\int_s^{s+T}m_{\tau,s+T}(0)\,d\tau}\qquad \text{ and } \qquad
c=2\underline bT\e^{-3\overline b(A+2T)}.$$
We treat the last three assumptions \eqref{as:A2I}-\eqref{as:A4I} together by proving that there exists $d>0$ such that for all $t\geq s$,
\[d\,\|m_{s,t}\|_\infty\leq \nu(m_{s,t}).\]
If $t-s\leq A$, we have $\|m_{s,t}\|_\infty\leq \e^{2\overline b A},$
and from Lemma~\ref{lm:renewal-periodic:monotone}
$\nu_s(m_{s,t})\geq1,$
so it remains to treat the case $t-s>A.$
Keeping $n=\big\lfloor\frac{A}{T}\big\rfloor$ and setting $N=\big\lfloor\frac{t-s}{T}\big\rfloor\geq n$, we have from~\eqref{eq:renewal-periodic:Duhamel}
\begin{align*}
m_{s,t}(a)&=\e^{-\int_s^tb(\tau,a+\tau-s)d\tau}+2\int_s^t\e^{-\int_s^\tau b(\tau',a+\tau'-s)d\tau'}b(\tau,a+\tau-s)m_{\tau,t}(0)\,d\tau\\
&\leq1+2\overline b\sum_{k=0}^{n}\int_{s+kT}^{s+(k+1)T}\e^{-\int_s^\tau b(\tau',a+\tau'-s)d\tau'}m_{\tau,t}(0)\,d\tau\\
&\hspace{10mm}+2\overline b\sum_{k=n+1}^{N}\int_{s+kT}^{s+(k+1)T}\e^{-\int_s^\tau b(\tau',a+\tau'-s)d\tau'}m_{\tau,t}(0)\,d\tau\\
&\hspace{20mm}+2\overline b\int_{s+NT}^t\e^{-\int_s^\tau b(\tau',a+\tau'-s)d\tau'}m_{\tau,t}(0)\,d\tau\\
&\leq1+2\overline b\sum_{k=0}^{n}\int_{s}^{s+T}m_{\tau+kT,t}(0)\,d\tau\\
&\hspace{10mm}+2\overline b\sum_{k=n+1}^{N}\int_{s}^{s+T}\e^{-\int_s^{\tau+kT} b(\tau',a+\tau'-s)d\tau'}m_{\tau+kT,t}(0)\,d\tau\\
&\hspace{20mm}+2\overline b\int_{s}^{t-NT}m_{\tau+NT,t}(0)\,d\tau\\
&\leq1+2\overline b(n+1)\int_{s}^{s+T}m_{\tau,t}(0)\,d\tau+2\overline b\int_{s}^{t-NT}m_{\tau,t}(0)\,d\tau\\
&\hspace{10mm}+2\overline b\sum_{k=n+1}^{N}\int_{s}^{s+T}\e^{-\int_{(n+1)T}^{kT} \underline b\1_{t_1\leq u-\lfloor u/T\rfloor\leq t_2}\,du}m_{\tau,t}(0)\,d\tau\\
&\leq(1/T+2\overline b(n+2))\int_{s}^{s+T}m_{\tau,t}(0)\,d\tau+2\overline b\sum_{k=0}^{N-n-1}\e^{-k\underline b(t_2-t_1)}\int_{s}^{s+T}m_{\tau,t}(0)\,d\tau\\
&\leq\bigg(\frac1T+2\Big(\frac AT+3\Big)\overline b+\frac{2\overline b}{1-\e^{-\underline bT}}\bigg)\int_{s}^{s+T}m_{\tau,t}(0)\,d\tau.\\
\end{align*}
On the other hand we have
\begin{align*}
\nu(m_{s,t})&=\nu(m_{s+T,t+T})=\frac{1}{\int_s^{s+T}m_{\tau,s+T}(0)\,d\tau}\int_s^{s+T}m_{\tau,t+T}(0)\,d\tau\\
&\geq\frac{1}{\int_s^{s+T}\e^{2\overline b(s+T-\tau)}\,d\tau}\int_s^{s+T}m_{\tau,t}(0)\,d\tau\geq 2\overline b\e^{-2\overline bT}\int_s^{s+T}m_{\tau,t}(0)\,d\tau,
\end{align*}
which  gives
\[m_{s,t}(a)\leq\bigg(\frac1{2\overline bT}+\frac AT+3+\frac{1}{1-\e^{-\underline bT}}\bigg)\e^{2\overline bT}\nu_s(m_{s,t})\]
and ends the proof.
\end{proof}

\begin{theo}\label{th:renewal-periodic:constant}
Assume that $b(t,a)=b(t)$ is a continuous $T$-periodic function, which is not identically zero.
Then there exists a unique Floquet family $(\lambda_F,\gamma_{s,t},h_{s,t})_{0\leq s\leq t}$ for the semigroup $(M_{s,t})_{0\leq s\leq t}$ and a constant $C>0$ such that for all $t\geq s\geq0$ and all $\mu\in\M(\R_+)$
\[\big\|\e^{-\lambda_F(t-s)}\mu M_{s,t}-\mu(h_{s,s})\gamma_{s,t}\big\|_{\mathrm{TV}}\leq C\,\|\mu\|_{\mathrm{TV}}\,\e^{-2\int_s^tb(\tau)d\tau}.\]
\end{theo}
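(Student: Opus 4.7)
The plan is to verify the hypotheses of Theorem~\ref{th:Floquet} to obtain the Floquet family, then re-apply Theorem~\ref{th:main} with $\nu = \gamma_{s,s}$ to extract the sharp rate $\e^{-2B(s,t)}$, writing $B(s,t):=\int_s^t b(\tau)\,d\tau$. The key structural simplification is that when $b$ depends only on time, $m_{s,t}(a)$ is independent of the age $a$: applying~\eqref{eq:renewal-periodic:Duhamel} to $f=\1$ removes the $a$-dependence, differentiating the resulting equation in $s$ yields $\partial_s m_{s,t} = -b(s)m_{s,t}$ with $m_{t,t}=1$, and hence $m_{s,t}=\e^{B(s,t)}$. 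In particular $\|m_{s,t}\|_\infty \leq \e^{\overline b(t-s)}$ is locally bounded.

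Dropping the free-streaming term in~\eqref{eq:renewal-periodic:Duhamel} yields, for every $f\geq 0$,
\[M_{s,t}f(a) \geq 2\int_s^t \e^{-B(s,\tau)}b(\tau)M_{\tau,t}f(0)\,d\tau,\]
which is independent of $a$. Normalizing by its value at $f=\1$, equal to $\frac{1}{2}(m_{s,t}-\e^{-B(s,t)})$, produces a probability $\nu_{s,t}$ on $\R_+$ with $\delta_a M_{s,t} \geq m_{s,t}(a)(1-\e^{-2B(s,t)})\nu_{s,t}$. On the periodic subdivision $t_i=s+iT$, $0\leq i\leq N:=\lfloor(t-s)/T\rfloor$, with $\nu_i:=\nu_{t_{i-1},t_i}$, $T$-periodicity gives $c_i = 1-\e^{-2B_T}$ where $B_T:=\int_0^T b$; spatial constancy of $m_{t_i,\tau}$ gives $\nu_i(m_{t_i,\tau})=m_{t_i,\tau}$, so~\eqref{as:A2I} holds with $d_i=1$,~\eqref{as:A3I} with $\alpha=(1-\e^{-2B_T})^{-1}$, and~\eqref{as:A4I} with $\beta=1$ for \emph{any} probability $\nu$ on $\R_+$. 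Hence
\[C_{\alpha,1,\nu}(s,t) \geq \sum_{i=1}^N 2B_T = 2NB_T \geq 2B(s,t) - 2\overline b T,\]
since $0\leq t-(s+NT)<T$ and $b\leq \overline b$.

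Theorem~\ref{th:Floquet} therefore applies and produces the Floquet family $(\lambda_F,\gamma_{s,t},h_{s,t})$. Testing $\gamma_{s,s}M_{s,s+T}=\e^{\lambda_F T}\gamma_{s,s}$ against $\1$ with $m_{s,s+T}=\e^{B_T}$ shows $\lambda_F = B_T/T$. I would then reapply Theorem~\ref{th:main} with $\nu=\gamma_{s,s}$ (legitimate for the reasons above), $s_0=s$ and $\gamma=\gamma_{s,s}$. Using $\gamma_{s,s}M_{s,t}=\e^{\lambda_F(t-s)}\gamma_{s,t}$ together with $\gamma_{s,s}(m_{s,t})=\e^{B(s,t)}$, the theorem yields
\[\bigl\|\mu M_{s,t} - \mu(h_{s,s})\e^{\lambda_F(t-s)}\gamma_{s,t}\bigr\|_{\mathrm{TV}} \leq C_1\|\mu\|_{\mathrm{TV}}\,\e^{B(s,t)}\e^{-C(s,t)} \leq C_2\|\mu\|_{\mathrm{TV}}\,\e^{-B(s,t)}.\]
Dividing by $\e^{\lambda_F(t-s)}$ and using $|B(s,t)-\lambda_F(t-s)|\leq 2\overline b T$ (from the decomposition $t-s=nT+r$ with $r\in[0,T)$ and periodicity) produces the advertised bound. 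The only real difficulty is this last bookkeeping step: combining the prefactor $\e^{B(s,t)}$ coming from Theorem~\ref{th:main}, the capacity lower bound $2B(s,t)-O(1)$, and the Floquet identity is what recovers the sharp exponent $2B(s,t)$ rather than the generic $\rho(t-s)$ that Theorem~\ref{th:Floquet} alone would deliver.
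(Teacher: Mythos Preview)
Your argument is correct and reaches the same conclusion, but the route to the coupling-capacity lower bound is genuinely different from the paper's. The paper takes a \emph{fine} subdivision $t_i=s+iT/k$ and uses only the free-streaming term in the Duhamel formula to get $c_i=\frac{2T}{k}\e^{-\|b\|_\infty T/k}\min_{[t_{i-1},t_i]}b$ with the explicit measure $\nu_i=\frac{k}{T}\1_{[0,T/k]}\,da$; summing $-\log(1-c_i)$ is then a Riemann sum, and letting $k\to\infty$ (this is where continuity of $b$ enters) yields $C_{\alpha,\beta,\nu}(s,t)\geq 2\int_s^{s+(n-1)T}b$. You instead keep the integral term in Duhamel and take the \emph{periodic} subdivision $t_i=s+iT$ with the implicit measure $\nu_i\propto\int_{t_{i-1}}^{t_i}\e^{-B(t_{i-1},\tau)}b(\tau)\,\delta_0 M_{\tau,t_i}\,d\tau$, obtaining the exact identity $1-c_id_i=\e^{-2B_T}$ and hence $C_{\alpha,\beta,\nu}(s,t)\geq 2NB_T$ without any limit. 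Your approach is cleaner and does not need continuity of $b$; the paper's has the virtue that $\nu_i$ is explicit. One small slip: the value at $f=\1$ of your lower bound is $m_{s,t}-\e^{-B(s,t)}$, not half of it (the factor $2$ from Duhamel is already inside), but this does not affect the resulting $c_i=1-\e^{-2B_T}$. Your final bookkeeping step---reapplying Theorem~\ref{th:main} with $\nu=\gamma_{s,s}$ and using $|B(s,t)-\lambda_F(t-s)|\leq 2\overline bT$---is exactly what the paper does implicitly by invoking the proof (rather than just the statement) of Theorem~\ref{th:Floquet}; making it explicit is a good idea.
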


\begin{proof}
Since $b$ does not depend on $a$ we have the explicit formula
\begin{equation}
\label{eq:yuleperiodiq}
m_{s,t}(a)=\e^{\int_s^t b(\tau)d\tau}.
\end{equation}
In particular it ensures that the function $(s,t)\mapsto\|m_{s,t}\|_\infty$ is locally bounded.

Now we prove \eqref{as:A1I}.
Let $t\geq s+T,$ $k\in\N,$ $n=\big\lfloor\frac{t-s}{T}\big\rfloor\geq1,$ and $N=(n-1)k+1.$
For all $0\leq i\leq N-1$ we set $t_i=s+i\frac{T}{k},$ and $t_N=s+nT.$
From the Duhamel formula~\eqref{eq:renewal-periodic:Duhamel} we have for any $1\leq i\leq N-1$ and $f\geq0$
\begin{align*}
M_{t_{i-1},t_i}f(a)
&\geq2\int_{t_{i-1}}^{t_i}\e^{-\int_{t_{i-1}}^\tau b(\tau')d\tau'}b(\tau)f(t_i-\tau)\,d\tau\geq2\e^{-\|b\|_\infty\frac Tk}\Big(\min_{[t_{i-1},t_i]}b\Big)\int_0^{\frac{T}{k}}f(\tau)\,d\tau.
\end{align*}
For $i=N$ we write
\begin{align*}
M_{t_{N-1},t_N}f(a)
&\geq2\int_{s+(n-1)T}^{s+nT}\e^{-\int_{s+(n-1)T}^\tau b(\tau')d\tau'}b(\tau)f(s+(n-1)T-\tau)\,d\tau\\
&\geq2\e^{-\|b\|_\infty T}\int_0^Tb(T-\tau)f(\tau)\,d\tau.
\end{align*}
So \eqref{as:A1I} is verified for $0\leq i\leq N-1$ with $\nu_i(f)=\frac{n}{T}\int_0^{\frac{T}{n}}f(\tau)\,d\tau$ and
\[c_i=\frac{2T}{k}\e^{-\|b\|_\infty\frac Tk}\min_{[t_{i-1},t_i]}b\]
and for $i=N$ with $\nu_N(f)=\frac{\int_0^Tb(T-\tau)f(\tau)\,d\tau}{\int_0^Tb(\tau)\,d\tau}$ and
\[c_N=2\e^{-\|b\|_\infty T}\int_0^Tb(\tau)\,d\tau.\]

From~\eqref{eq:yuleperiodiq} we deduce that Assumptions \eqref{as:A2I}-\eqref{as:A4I} are trivially verified with
\[d_i=1,\qquad \alpha=\frac1{c_N},\qquad\beta=1.\]

Thus the coupling capacity satisfies, for all $k\in\N,$
\[C_{\alpha,\beta,\nu}(s,t)\geq-\sum_{i=1}^{N-1}\log\bigg(1-\frac{2T}{k}\e^{-\|b\|_\infty\frac Tk}\min_{[t_{i-1},t_i]}b\bigg).\]
Letting $k\to\infty$ we get
\[C_{\alpha,\beta,\nu}(s,t)\geq2\int_{s}^{s+(n-1)T}b(\tau)\,d\tau\geq2\int_{s}^{t}b(\tau)\,d\tau-2\int_0^Tb(\tau)\,d\tau.\]

\end{proof}

\begin{Rq}[About optimality]
In the constant case $b=const>0$, we recover the spectral gap $2b.$  Indeed,
we know that the spectral gap of the operator
\[\A f(a)=f'(a)-bf(a)+2bf(0)\]
cannot be larger than $2b$ because the dominant Perron eigenvalue is $b,$ and $-b$ belongs to the spectrum of $\A$
(the operator $\A+b$ is not surjective on $C_b(\R_+)$ since all the solutions to the equation $f'(a)=-2bf(0)+\frac{1}{1+a}$ for instance are unbounded).
\end{Rq}

\begin{Rq}[Eigenvalue]
From the periodicity of $(\gamma_{s,t})_{t\geq s\geq0}$ and \eqref{eq:yuleperiodiq}, we easily get that
$$
\lambda_F = \frac{1}{T} \int_0^T b(\tau) d\tau.
$$
\end{Rq}

\appendix

\section{Branching models, absorbed Markov process and semigroups}
\label{appendix:proba}
The techniques by coupling used in this paper have been extensively developed in probability, in particular for the study 
of branching processes and killed process, see the introduction for references. Let us present here informally the probabilistic 
objects and the interpretation of the auxiliary semigroup. \\
For that purpose we consider a population of individuals with a trait belonging to the space $\X$. This population can die or give birth to some offsprings 
with a rate which depends on their trait and independently one from each other (branching property). Moreover the trait may vary in an homogeneous
way and without memory (Markov property). Let us also assume that some subspace $\mathcal S$ of $\X$ is absorbing, meaning that each individual whose trait reaches this set stop dividing and keeps a constant trait. Writing $V_t$ the set of individuals at time $t$ and 
$(X_t^i :  i \in V_t)$ the set of their traits, the branching and Markov properties and the absorbing property of $\mathcal S$ ensure that
$$\delta_x M_{s,t}(f)=\E\left( \sum_{i\in V_t} f(X_t^i)1_{X_t^i \not\in \mathcal S} \  \big\vert \ X_s=\delta_x \right)$$
is a semigroup. In general, it is not conservative, since its mass 
$$m_{s,t}(x)=\delta_x M_{s,t}{\bf 1}=\E(\#\{ i \in V_t : X_t^i \not\in \mathcal S  \  \big\vert \ X_s=\delta_x\})$$
can decrease  by absorption in $\mathcal S$ or death of individual or created by births. The trait of a typical non-absorbed individual
is then given by the auxiliary conservative inhomogeneous semigroup
$$\delta_xP^{(t)}_{u,v}f=\frac{\delta_x M_{u,v}(fm_{v,t})}{m_{u,t}(x)}=\frac{\E\left( \sum_{i\in V_t} f(X_v^i)1_{X_v^i \not\in \mathcal S} \big\vert X_u=\delta_x\right)}{\E(\#\{ i \in V_t : X_t^i \not\in \mathcal S\} \big \vert X_u=\delta_x )}=\E(f(Y_v^{(t)}) \vert Y_u^{(t)}=x),$$
where $X_v^i$ is the trait of the ancestor of $i$ at time $v$ and $Y^{(t)}$
is the inhomogenous Markov process  associated to $P^{(t)}$. Thus, $Y^{(t)}$ is the process describing the dynamics of the trait of a typical individual, which is alive at time $t$ and non-absorbed. 
Proving that it is ergodic ensures the ergodicity of $\delta_x M_{s,t}{\bf 1}/m_{s,t}(x)$ as $t$ goes to infinity. In this paper, we  make a coupling for that, with Doeblin conditions which ensure exponential uniform ergodicity. Thanks to \cite{CV16}, this Doeblin condition can be rewritten in terms of   coupling constants on the original semigroup $M$. \\

In homogeneous-time setting, two particular classes of processes have attracted lots of attention. First, if  we make $\mathcal S=\varnothing$, then $X$ is a branching process and 
$$\delta_x M_{s,t}(f)=\E\left( \sum_{i\in V_t} f(X_t^i) \  \big\vert \ X_s=\delta_x \right)$$
is its first moment semigroup which provides the mean number of individuals with a given trait.
The auxiliary process describes the dynamical of the trait along the ancestral lineage of an individual chosen uniformly at random, when the population is becoming large. More generally, the genealogical tree of the population can be constructed  from this typical lineage, which is called \emph{spine construction}. \\
Second, if the individuals neither die nor give birth, we get a Markov process in the space trait $\X$ and 
$$\delta_x M_{s,t}(f)=\E(f(X_t)1_{X_t\not\in \mathcal S} \ \vert \ X_s=x)$$
Assume that $X_t$ is eventually absorbed as $t$ goes to infinity a.s. and consider the distribution of the process conditioned on non-absorption :
$$\mathbb P_x(X_t \in .  \ \vert \  X_t \not\in \mathcal S)=\frac{\delta_x M_{0,t}}{m_{0,t}(x)}=\delta_xP^{(t)}_{0,t}.$$
The ergodic behavior of $P^{(t)}$ and its convergence to a distribution $\nu$ yields the convergence of the conditioned distribution (Yaglom limit) to the quasistationary distribution. At fixed time $t$, $P^{(t)}$ describes the dynamic of the trait for trajectories non-absorbed at time $t$.

\section{Measure solutions to the renewal equation}\label{appendix:renewal}

We give here the details about the construction of the homogeneous renewal semigroup.
It is based on the dual renewal equation
\begin{equation}\label{eq:renewal:dual}
\hskip8mm\partial_t f_t(a)-\partial_a f_t(a)+b(a)f_t(a)=2b(a)f_t(0), \hskip14mm t,a\geq0.
\end{equation}
Integrating this equation along the characteristics, we obtain the mild formulation (also called Duhamel formula)
\begin{equation}\label{eq:renewal:Duhamel2}
f_t(a)=f_0(a+t)\e^{-\int_0^tb(a+\tau)d\tau}+2\int_0^t\e^{-\int_0^\tau b(a+\tau')d\tau'}b(a+\tau)f_{t-\tau}(0)\,d\tau.
\end{equation}
The first result is about the well-posedness of this equation in $\B_b(\R_+).$

\begin{lem}\label{prop:renewal:Duhamel}
For all $f_0\in \B_b(\R_+)$ there exists a unique family $(f_t)_{t\geq0}\subset \B_b(\R_+)$ solution to~\eqref{eq:renewal:Duhamel2}.
Additionally if $f_0\geq0$ then $f_t\geq0$ for all $t\geq0.$
\end{lem}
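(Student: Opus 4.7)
The plan is to reduce~\eqref{eq:renewal:Duhamel2} to a Volterra-type fixed-point problem for the boundary trace $g(t):=f_t(0)$ alone, solve it on every bounded interval via a contraction in a weighted sup norm, and then recover $f_t(a)$ by a single substitution.

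\textbf{Step 1: boundary equation.} Setting $a=0$ in~\eqref{eq:renewal:Duhamel2}, $g$ must satisfy
\[g(t)=f_0(t)\e^{-\int_0^t b(\tau)d\tau}+2\int_0^t\Phi(\tau)g(t-\tau)\,d\tau,\]
with $\Phi(\tau)=b(\tau)\e^{-\int_0^\tau b(\tau')d\tau'}$. Fix $T>0$ and let $B:=\mathfrak b(T)=\sup_{[0,T]}b<\infty$ (finite by local boundedness of $b$). For $\lambda>0$, endow $\B_b([0,T])$ with the equivalent norm $\|g\|_\lambda:=\sup_{t\in[0,T]}\e^{-\lambda t}|g(t)|$. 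The right-hand side defines an operator $\Gamma:\B_b([0,T])\to\B_b([0,T])$ and
\[\e^{-\lambda t}|\Gamma g(t)-\Gamma\tilde g(t)|\leq2B\int_0^t\e^{-\lambda\tau}\e^{-\lambda(t-\tau)}|g(t-\tau)-\tilde g(t-\tau)|\,d\tau\leq\tfrac{2B}{\lambda}\|g-\tilde g\|_\lambda.\]
Choosing $\lambda>2B$ makes $\Gamma$ a strict contraction, so by the Banach fixed-point theorem it admits a unique solution $g_T\in\B_b([0,T])$. Since this works for every $T>0$ and restrictions of the unique fixed point on $[0,T_2]$ solve the equation on $[0,T_1]$ for $T_1\leq T_2$, one obtains a unique global solution $g\in\B_b^{\mathrm{loc}}(\R_+)$.

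\textbf{Step 2: recovery of $f_t(a)$.} For each $t\geq0$, define $f_t(a)$ by plugging $g$ into the right-hand side of~\eqref{eq:renewal:Duhamel2}. Then $f_t\in\B_b(\R_+)$ because
\[|f_t(a)|\leq\|f_0\|_\infty+2\|g\|_{L^\infty([0,t])}\int_0^t\Phi(\tau)d\tau\leq\|f_0\|_\infty+2\|g\|_{L^\infty([0,t])},\]
and $f_t(0)=\Gamma g(t)=g(t)$ by construction, so $(f_t)_{t\geq0}$ solves~\eqref{eq:renewal:Duhamel2} for every $a\geq0$. Conversely any solution $(\tilde f_t)$ produces a boundary trace $\tilde g(t)=\tilde f_t(0)$ satisfying the same fixed-point equation; the uniqueness in Step~1 forces $\tilde g=g$, and then $\tilde f_t(a)=f_t(a)$ by the Duhamel formula.

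\textbf{Step 3: positivity.} If $f_0\geq0$, then $\Gamma$ maps the positive cone $\{g\geq0\}\subset\B_b([0,T])$ into itself. Starting the Picard iteration from $g^{(0)}\equiv0$, every iterate is non-negative, and the contraction implies $g^{(n)}\to g$ uniformly, so $g\geq0$ on $[0,T]$ for every $T$. Plugging this non-negative $g$ back into the Duhamel formula yields $f_t(a)\geq0$ for all $t,a\geq0$.

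The only mild subtlety is that $b$ is merely locally bounded, which is why the weighted-norm contraction (with $\lambda$ adapted to $B=\mathfrak b(T)$) is preferable to a naive small-time argument that would then need to be iterated with shrinking step sizes; otherwise the argument is entirely standard.
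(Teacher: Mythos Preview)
Your proof is correct and follows the same skeleton as the paper's: reduce~\eqref{eq:renewal:Duhamel2} to a scalar Volterra equation for the boundary trace $g(t)=f_t(0)$, solve it by a Banach fixed point, and then recover $f_t(a)$ by substitution; positivity is obtained because $\Gamma$ preserves the positive cone. The only technical difference is in how the contraction is obtained: the paper works in the plain sup norm on a short interval $[0,T]$ with contraction constant $2\int_0^T\Phi<1$ and then iterates, whereas you use a weighted norm $\|\cdot\|_\lambda$ on an arbitrary $[0,T]$. Both are standard devices for linear Volterra equations and buy exactly the same conclusion. One small inaccuracy in your closing remark: the paper's small-time argument does \emph{not} require shrinking step sizes, because the convolution structure makes the contraction constant on each successive block $[kT,(k+1)T]$ equal to $2\int_0^T\Phi(\tau)\,d\tau$ again, independently of how large $b$ becomes later; so the iteration proceeds with a fixed step.
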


\begin{proof}
First we use the Banach fixed point theorem on a truncated problem.
For $T>0$ and $f_0\in \B_b(\R_+)$ we define the operator $\Gamma:\B_b([0,T])\to \B_b([0,T])$ by
\[\Gamma g(t)= f_0(t)\e^{-\int_0^tb(\tau)d\tau}+2\int_0^t \Phi(\tau)g(t-\tau)\,d\tau.\]
We easily have
\[\|\Gamma g_1-\Gamma g_2\|_\infty\leq 2\int_0^T\Phi(\tau)\,d\tau\,\|g_1-g_2\|_\infty,\]
so $\Gamma$ is a contraction if $2\int_0^T\Phi<1$ and there is a unique fixed point in $\B_b([0,T]).$
Additionally since $\Gamma$ preserves non-negativity when $f_0\geq0,$
we get that the fixed point $g$ is non-negative when $f_0$ is non-negative.
Since the contraction constant $2\int_0^T\Phi$ is independent of $f_0,$ we can iterate to obtain a unique function $g\in \B_b(\R_+)$ which satisfies
\[g(t)= f_0(t)\e^{-\int_0^tb(\tau)d\tau}+2\int_0^t \Phi(\tau)g(t-\tau)\,d\tau\]
for all $t\geq0.$
Now we set for all $t,a\geq0$
\[f_t(a)=f_0(a+t)\e^{-\int_0^tb(a+\tau)d\tau}+2\int_0^t\e^{-\int_0^\tau b(a+\tau')d\tau'}b(a+\tau)g(t-\tau)\,d\tau,\]
which is a solution to~\eqref{eq:renewal:Duhamel} since by definition $f_t(0)=\Gamma g(t)=g(t).$
The uniqueness is a direct consequence of the uniqueness of $g.$
The non-negativity follows from the non-negativity of $g$ when $f_0\geq0,$
and the boundedness is given by the inequality
\[\|f_t\|_\infty\leq\|f_0\|_\infty+2\sup_{0\leq s\leq t}|g(s)|.\]
\end{proof}

Lemma~\ref{prop:renewal:Duhamel} allows to define for all $t\geq0$ the operator $M_t$ on $\B_b(\R_+)$ by setting $M_tf_0:=f_t$ for all $f_0\in \B_b(\R_+).$
Then for $\mu\in\M_+(\R_+)$ we define the positive measure $\mu M_t$ by setting for all Borel set $A\subset\R_+$
\[(\mu M_t)(A):=\mu(M_t\1_A).\]
The axioms of a positive measure are satisfied.
First it is clear that $(\mu M_t)(\varnothing)=0$ and that $(\mu M_t)(A\cup B)=(\mu M_t)(A)+(\mu M_t)(B)$ when $A$ and $B$ are two disjoint Borel sets.
The last axiom deserves a bit more attention.
Let $(A_n)_{n\geq0}$ be an increasing sequence of Borel sets and $A=\bigcup_{n\geq0}A_n.$
We want to check that $(\mu M_t)(A)=\lim_{n\to\infty}(\mu M_t)(A_n).$
The sequence $(\1_{A_n})_{n\geq0}$ is an increasing sequence of Borel functions which converges pointwise to $\1_A.$
By positivity of the semigroup, $(M_t\1_{A_n})_{n\geq0}$ is an increasing sequence of Borel functions bounded by $M_t\1.$
Thus this sequence admits a pointwise limit $f_t\in\B_b(\R_+)$ which clearly satisfies the Duhamel formula~\eqref{eq:renewal:Duhamel} with $f_0=\1_A.$
By uniqueness of the solution to the Duhamel formula we get that $M_t\1_{A_n}\to M_t\1_A$ pointwise.
Then by dominated or monotone convergence we deduce that $(\mu M_t)(A)=\mu(M_t\1_A)=\lim_{n\to\infty}\mu(M_t\1_{A_n})=\lim_{n\to\infty}(\mu M_t)(A_n).$
Finally for a signed measure $\mu\in\M(\R_+)$ we set obviously $\mu M_t:=\mu_+ M_t-\mu_- M_t.$

\medskip

The family $(M_t)_{t\geq0}$ such defined is a semigroup which satisfies Assumption~\ref{as:Mst}.
The semigroup property is a consequence of the uniqueness of the solution to the Duhamel formula~\eqref{eq:renewal:Duhamel}.
The positivity has been proved in Lemma~\ref{prop:renewal:Duhamel}.
For the strong positivity it follows from the Duhamel formula~\eqref{eq:renewal:Duhamel} that for all $t,a\geq0$
\[m_t(a)\geq\e^{-\int_0^t b(a+\tau)d\tau}>0.\]
The compatibility $(\mu M_t)(f)=\mu(M_tf)$ follows directly from the definition of $\mu M_t$ and the definition of Borel functions.

\medskip

It is claimed in Section~\ref{sect:hom-renewal} that the family $(\mu M_t)_{t\geq0}$ is a measure solution to the renewal equation.
Measure valued solutions to structured population models drew interest in the last few years~\cite{GwiazdaWiedemann,CanizoCarrilloCuadrado,CarrilloColomboGwiazdaUlikowska,GwiazdaLorenzMarciniak,G17}.
They are mainly motivated by biological applications which often require to consider initial distributions which are not densities but measures (Dirac masses for instance).
For us it is additionally the suitable framework to apply our ergodic result in Theorem~\ref{th:homogene}.
We refer to~\cite{G17} for the proof that the family $(\mu M_t)_{t\geq0}$ is a measure valued solution to Equation~\eqref{eq:renewal} for any $\mu\in\M(\R_+)$.
Here we only give a heuristic argument which consists in differentiating the semigroup property $\mu M_t f=\mu M_s M_{t-s}f$ with respect to $s\in[0,t].$
The chain rule gives
\[\partial_s(\mu M_s)M_{t-s}f+\mu M_s\partial_s(M_{t-s}f)=0\]
and since $M_tf$ is a solution to the dual Equation~\eqref{eq:renewal:dual} this gives
\[\partial_s(\mu M_s)M_{t-s}f-\mu M_s\A (M_{t-s}f)=0\]
where $\A$ is the unbounded operator defined on $C^1(\R_+)$ by $\A f(a)=f'(a)-b(a)f(a)+2b(a)f(0).$
Taking $s=t$ we get that for all bounded and continuously differentiable function $f$
\[\partial_t(\mu M_tf)=\mu M_t(\A f),\]
which is a weak formulation of Equation~\eqref{eq:renewal}.

\

\section{The max-age semigroup}\label{appendix:maxage}

As for the homogeneous renewal equation, to build a solution to Equation~\eqref{eq:age_max} we use a duality approach.
We start with the (backward) dual equation
\be\label{eq:dual_age_max}\left\{\begin{array}{ll}
\partial_s f_{s,t}(a)+\partial_af_{s,t}(a)+b(a)f_{s,t}(0)=0,\qquad\qquad& s<t,\ 0\leq a< a_s,
\vspace{3mm}\\
f_{s,t}(a_s)=0,& s<t,
\vspace{3mm}\\
f_{t,t}(a)=f_t(a),& 0\leq a< a_t.
\end{array}\right.\ee
Integrating this equation along the characteristics we get the Duhamel formula
\be\label{eq:Duhamel_age_max2}
f_{s,t}(a)=f_t(a+t-s)+\int_s^t b_\tau(a+\tau-s)f_{\tau,t}(0)\,d\tau
\ee
where we have denoted $b_t(a):=b(a)\1_{[0,a_t)}(a)$ and $f_t$ has been extended by 0 beyond $a_t.$

\begin{lem}\label{prop:agemax:Duhamel}
For all $t>0,$ $f_t\in \B_b\big([0,a_t)\big),$ and $s\in[0,t],$ there exists a unique $f_{s,t}\in \B_b\big([0,a_s)\big)$ which satisfies~\eqref{eq:Duhamel_age_max2}.
Additionally if $f_t\geq0$ then $f_{s,t}\geq0.$
\end{lem}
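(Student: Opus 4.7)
The strategy is to mimic the proof of Lemma~\ref{prop:renewal:Duhamel}, with the difference that time runs backward here: for $t$ fixed we seek $(f_{s,t})_{s\in[0,t]}$. The key reduction is that once we know the trace at the boundary $a=0$, namely $g(s):=f_{s,t}(0)$, the whole function $f_{s,t}(a)$ is determined by the Duhamel formula~\eqref{eq:Duhamel_age_max2}. Indeed, setting $a=0$ in~\eqref{eq:Duhamel_age_max2} yields the scalar Volterra equation
\[
g(s)=f_t(t-s)+\int_s^t b_\tau(\tau-s)\,g(\tau)\,d\tau, \qquad s\in[0,t].
\]
The plan is thus to solve this scalar equation for $g\in\B_b([0,t])$ by a Banach fixed point argument, and then to \emph{define} $f_{s,t}$ via the right-hand side of~\eqref{eq:Duhamel_age_max2}, in the spirit of what is done in Lemma~\ref{prop:renewal:Duhamel}.

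For the fixed point argument, I would change variables $u=t-s\in[0,t]$ to turn the backward Volterra equation into a forward one. Setting $G(u):=g(t-u)$ and $\sigma=t-\tau$ gives
\[
G(u)=f_t(u)+\int_0^u b_{t-\sigma}(u-\sigma)\,G(\sigma)\,d\sigma,
\]
and the operator $\Gamma:\B_b([0,T_0])\to\B_b([0,T_0])$ defined by the right-hand side satisfies $\|\Gamma G_1-\Gamma G_2\|_\infty\leq \overline b\, T_0\,\|G_1-G_2\|_\infty$ since $b\leq\overline b$. Choosing $T_0<1/\overline b$ yields a unique fixed point on $[0,T_0]$; iterating on successive intervals $[kT_0,(k+1)T_0]\cap[0,t]$ (using the value at the previous endpoint to redefine the ``source term'' $f_t(u)$) produces a unique $G\in\B_b([0,t])$, hence a unique $g\in\B_b([0,t])$. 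Then the function
\[
f_{s,t}(a):=f_t(a+t-s)+\int_s^t b_\tau(a+\tau-s)\,g(\tau)\,d\tau
\]
lies in $\B_b([0,a_s))$, since $|f_{s,t}(a)|\leq \|f_t\|_\infty+\overline b(t-s)\|g\|_\infty$, and by construction it satisfies~\eqref{eq:Duhamel_age_max2} (consistency at $a=0$ is guaranteed because $g$ was built as the solution of exactly that trace equation). Uniqueness of $f_{s,t}$ follows from the uniqueness of $g$: any solution of~\eqref{eq:Duhamel_age_max2} evaluated at $a=0$ must solve the scalar Volterra equation.

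For the non-negativity statement, assume $f_t\geq 0$. Starting the Picard iteration from $G_0\equiv 0$, the operator $\Gamma$ maps non-negative functions to non-negative functions because $f_t\geq 0$ and $b_{t-\sigma}\geq 0$; hence the iterates $G_k=\Gamma^k G_0$ are non-negative, and so is their uniform limit $G$. Consequently $g\geq 0$ on $[0,t]$, and then the defining formula for $f_{s,t}$ gives $f_{s,t}\geq 0$ immediately. The only mild technical point I anticipate is checking that no additional regularity of $s\mapsto b_\tau(a+\tau-s)$ is needed for the integrals to make sense: since $b$ is merely Borel and bounded and $\1_{[0,a_\tau)}$ is Borel in $(\tau,a)$, each integrand is a Borel function of $\tau$ and Lebesgue integration suffices, so no obstruction arises. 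The whole proof is therefore essentially a transcription of Lemma~\ref{prop:renewal:Duhamel} in the backward, time-inhomogeneous setting.
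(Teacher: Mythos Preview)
Your proposal is correct and follows precisely the approach the paper indicates: the paper itself does not give an explicit proof of this lemma but simply states that it ``follows exactly the strategy of the proof of Lemma~\ref{prop:renewal:Duhamel}.'' You have carried out that transcription in detail --- reducing to the scalar Volterra equation for the boundary trace $g(s)=f_{s,t}(0)$, applying a Banach fixed point on short intervals with contraction constant $\overline b\,T_0$, iterating, and then reconstructing $f_{s,t}$ from $g$ --- which is exactly what the paper intends.
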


We do not repeat the proof of this result since it follows exactly the strategy of the proof of Lemma~\ref{prop:renewal:Duhamel}.
As for the homogeneous renewal equation we define the semigroup $(M_{s,t})_{0\leq s\leq t}$ on $(\X_t)_{t\geq0}=\big([0,a_t)\big)_{t\geq0},$
first on the right hand side by setting for all $f_t\in \B_b([0,a_t))$
\[M_{s,t}f_t:=f_{s,t}\]
where $f_{s,t}$ is the unique solution to Equation~\eqref{eq:Duhamel_age_max2},
and then on the left by setting for all $\mu\in\M([0,a_s))$ and all Borel set $A\subset[0,a_t)$
\[(\mu M_{s,t})(A)=\mu_+(M_{s,t}\1_A)-\mu_-(M_{s,t}\1_A).\]
For any $\mu\in\M\big([0,a_s)\big)$ the family $(\mu M_{s,t})_{s\leq t}$ is a measure solution to Equation~\eqref{eq:age_max}.
As for the homogeneous case a non rigorous justification is obtained by differentiating the semigroup property $\mu M_{s,t}f=\mu M_{s,r}M_{r,t}f$ with respect to $r\in[s,t]$ and using that $M_{r,t}f$ is solution to~\eqref{eq:dual_age_max}.

\medskip

The semigroup property for the family $(M_{s,t})_{t\geq s\geq 0}$ is a consequence of the uniqueness of the solution to the Duhamel formula~\eqref{eq:Duhamel_age_max2}.
We now verify Assumption~\ref{as:Mst}.
The positivity has been proved in Lemma~\ref{prop:agemax:Duhamel}.
For the strong positivity it suffices to check that $m_{s,t}(0)>0.$
Indeed if $m_{s,t}(0)>0$ for all $0\leq s\leq t$ the Duhamel formula ensures that for $a<a_s$
\[\qquad m_{s,t}(a)=\1_{a+t-s< a_t}+\int_s^t b_\tau(a+\tau-s)m_{\tau,t}(0)\,d\tau\geq \underline b \int_s^t \1_{a+\tau-s< a_\tau }\,m_{\tau,t}(0)\,d\tau>0.\]
The positivity of $m_{s,t}(0)$ is clear if $t-s<a_t$ since
\[m_{s,t}(0)\geq\1_{t-s< a_t}.\] 
Consider  now the case $t-s\geq a_t.$
The function $r\mapsto m_{r,t}(0)$ is continuous on $[s,t-a_t]$ since for $r\leq t-a_t$ we have
\[m_{r,t}(0)=\int_r^t b_\tau(\tau-r)m_{\tau,t}(0)\,d\tau.\]
Assume by contradiction that there exists $r_0\in[s,t-a_t]$ such that $m_{r_0,t}(0)=0$ and $m_{r,t}(0)>0$ for all $r\in(r_0,t].$
Then the equality above would give for $r=r_0$
\[0=\int_{r_0}^t b_\tau(\tau-r_0)m_{\tau,t}(0)\,d\tau,\]
which is not possible since the integrand on the right hand side is positive for $\tau$ close to~$r_0.$
Finally the compatibility condition readily follows from the definition of $\mu M_{s,t}.$

\section*{Acknowledgments}

B.C. and V.B. have received the support of the Chair ``Mod\'elisation Math\'ematique et Biodiversit\'e'' of VEOLIA-Ecole Polytechni\-que-MnHn-FX.
The three authors have been supported by ANR projects, funded by the French Ministry of Research:
B.C. by ANR PIECE (ANR-12-JS01-0006-01),
V.B. by ANR ABIM (ANR-16-CE40-0001) and ANR CADENCE (ANR-16-CE32-0007),
and P.G. by ANR KIBORD (ANR-13-BS01-0004).


\end{document}